\let\mathcal\mathscr
\DeclareRobustCommand{\SkipTocEntry}[5]{} 
\numberwithin{equation}{section}
\newtheorem{theorem}{Theorem}[section]
\newtheorem{lemma}[theorem]{Lemma}
\newtheorem{proposition}[theorem]{Proposition}
\newtheorem{corollary}[theorem]{Corollary}
\theoremstyle{definition}
\newtheorem*{ack}{Acknowledgements}
\newtheorem{rem}[theorem]{Remark}
\newtheorem*{rem*}{Remark}
\newtheorem*{examples}{Examples}
\newtheorem{definition}[theorem]{Definition}
\newcommand{\dsum}{\sideset{}{^{\prime}}{\sum}}
\newcommand{\dagsum}{\sideset{}{^{\dagger}}{\sum}}
\newcommand{\starsum}{\sideset{}{^{*}}{\sum}}
\renewcommand{\d}{\,\mathrm{d}}
\newcommand{\bc}{\boldsymbol{c}}
\newcommand{\bd}{\boldsymbol{d}}
\newcommand{\m}{\boldsymbol{m}}
\newcommand{\n}{\boldsymbol{n}}
\renewcommand{\u}{\boldsymbol{u}}
\renewcommand{\v}{\boldsymbol{v}}
\newcommand{\Q}{\boldsymbol{Q}}
\newcommand{\vphi}{\varphi}
\renewcommand{\rho}{\varrho}
\newcommand{\1}{\mathbf{1}}
\newcommand{\beps}{\boldsymbol{\varepsilon}}
\newcommand{\bphi}{\boldsymbol{\varphi}}
\newcommand{\fK}{\mathfrak{K}}
\newcommand{\mQ}{\mathcal{P}_{\flat}}
\newcommand{\0}{\mathbf{0}}
\newcommand{\ZZ}{\mathbb{Z}}
\newcommand{\NN}{\mathbb{N}}
\newcommand{\QQ}{\mathbb{Q}}
\newcommand{\RR}{\mathbb{R}}
\newcommand{\CC}{\mathbb{C}}
\DeclareMathOperator{\EE}{{\mathbb{E}}}
\renewcommand{\leq}{\leqslant}
\renewcommand{\geq}{\geqslant}
\renewcommand{\bar}{\overline}
\newcommand{\x}{\boldsymbol{x}}
\newcommand{\Mod}[1]{\;(\operatorname{mod}\,#1)}
\newcommand{\<}{\langle}
\renewcommand{\>}{\rangle}
\DeclareMathOperator{\vol}{vol}
\DeclareMathOperator{\lcm}{lcm}
\newcommand{\eps}{\varepsilon}
\begin{document}

\title{Linear correlations of multiplicative functions}
\author{Lilian Matthiesen}
\address{KTH\\
Department of Mathematics\\
10044 Stockholm\\
Sweden}
\email{lilian.matthiesen@math.kth.se}

\thanks{2010  {\em Mathematics Subject Classification.} 11N37 (11B30, 
11F30, 11D04)}

\begin{abstract}
We prove a Green--Tao type theorem for multiplicative functions.
\end{abstract}

\maketitle

\tableofcontents

\section{Introduction}\label{s:introduction}
The purpose of this paper is to establish an asymptotic result for 
correlations of complex-valued multiplicative functions.
More precisely, if $h_1, \dots h_r : \NN \to \CC$ are multiplicative functions that belong to a class 
$\mathcal{F}$ that will be introduced soon (see Section \ref{ss:the-class-F}), then we wish to 
asymptotically evaluate expressions of the form
\begin{equation} \label{eq:aim}
 \sum_{\n \in \ZZ^s \cap N \fK} h_1( \vphi_1(\n)) \dots h_r(\vphi_r(\n)), \qquad (N \to \infty)
\end{equation}
where $s \geq 2$, where $\fK \subset \RR^s$ is a fixed bounded convex subset that defines the family
$N\fK = \{N\x \in \RR^s : \x \in \fK\}$ of homogeneously expanding regions,
and where $\vphi_1, \dots, \vphi_r \in \ZZ[u_1, \dots, u_s]$
are fixed linear polynomials in $s \geq 2$ variables with the property that the non-constant parts 
of any two of these polynomials are pairwise linearly independent.

There has been extensive work on establishing upper bounds on expressions of the form \eqref{eq:aim}, 
both for linear polynomials and for polynomials of higher degree.
In \cite{erdos}, Erd{\H{o}}s obtained a correct order upper bound on the sum $\sum_{n\leq N} d(P(n))$, where $d$ denotes
the divisor function and $P$ an irreducible polynomial. 
Wolke \cite{wolke} extended Erd{\H{o}}s's approach from \cite{erdos} to all non-negative multiplicative functions $h$ that satisfy
$h(p^k) \leq C_1 k^{C_2}$ at all prime powers $p^k$ and with fixed constants $C_1$ and $C_2$.
In the case of linear polynomials, Shiu \cite{shiu} extended this work to all non-negative multiplicative functions 
satisfying the bound $h(p^k) \leq A^k$ at all prime powers as well as $h(n) \ll_{\eps} n^{\eps}$ for all $n \in \NN$ and $\eps > 0$.
Nair \cite{nair} extended Shiu's result in two directions, allowing now polynomials of higher degree with integer coefficients and 
non-zero discriminant as well as sub-multiplicative functions, i.e.\ to functions $h$ satisfying $h(mn) \leq h(m)h(n)$ 
whenever $\gcd(m,n)=1$.
Nair's work was further generalised by Nair and Tenenbaum \cite{nair-tenen}, who replace the sub-multiplicative function $h$ 
of one variable by any non-negative functions $F$ of $r \geq 1$ variables which satisfies the following sub-multiplicativity-type 
condition:
$$
F(m_1n_1, \dots, m_rn_r) \leq \min(A^{\Omega(m)},Bm^{\eps})F(n_1, \dots,n_r), \quad (m=m_1 \dots m_r),
$$
for all $r$-tuples $\m, \n \in \NN^r$ such that $\gcd(m_i,n_i)=1$ for $1 \leq i \leq r$.
The problem that Nair and Tenenbaum \cite{nair-tenen} study is that of establishing upper bounds for
sums of the form $\sum_{x < n \leq x+y} F(|P_1(n)|, \dots, |P_r(n)|)$ for polynomials $P_i \in \ZZ[X]$ such that 
$P = \prod_{1 \leq i \leq r} P_i$ has no fixed prime factor.
Nair and Tenenbaum's result covers the particular case where $F$ is given by
$F(m_1, \dots, m_r) = h_1(m_1) \dots h_r(m_r)$ for multiplicative functions $h_i$.
This case corresponds to the setting of \eqref{eq:aim}.
In unpublished work, Daniel \cite{daniel} established a Nair--Tenenbaum result with bounds that are uniform in the discriminant of $P$.
Finally, Daniel's work has been improved on and extended by Henriot in \cite{henriot}.

Asymptotic results on expressions of the form \eqref{eq:aim} are known in many special cases:
Green and Tao \cite{GT-mobius} establish such results for the M\"obius function $\mu$.
Using the machinery from \cite{GT-linearprimes,GT-mobius}, the author proved asymptotic results 
for the divisor function $d$ in \cite{lmd} and, in \cite{lmr, lmr2}, for the function $r$ counting representations 
by sums of two squares (and generalisations thereof to representations by binary quadratic forms). 
Lachand \cite{lachand} considers the characteristic function of $x^{1/u}$-smooth numbers.

The problem of finding upper bounds or even asymptotics for \eqref{eq:aim} in the case where $s=1$ is significantly harder 
and includes the famous open problem of finding the correct asymptotic behaviour of
$D_h(N)=\sum_{n\leq N} d(n)d(n+h)d(n-h)$ as a function of $h$.
When averaging over $h \in \{1, \dots, H\}$ with $H=N$, this question takes the form \eqref{eq:aim} with $s=2$.
Browning~\cite{TDB} showed that the average $\sum_{h\leq H} D_h(N)$ can be asymptotically evaluated for smaller values of $H$, 
more precisely for $H \geq N^{3/4 + \eps}$.
Using spectral methods, Blomer \cite{blomer} proves an asymptotic formula for (a smooth version of)
$\sum_{h \leq H}\sum_{n\leq N} a(n)d(n+h)d(n-h)$, where $H \geq N^{1/3 + \eps}$ and where $a$ is an arbitrary complex-valued
arithmetic function.
Finally, in work improving on many previous results (see the references in \cite{MRT}), 
Matom\"aki, Radziwi{\l}{\l} and Tao~\cite{MRT} recently proved asymptotic formulae for expressions of the form
$\sum_{X < n  \leq 2X} f(n) g(n+h)$ valid for almost all $|h| \leq H$ with $H \leq X^{1 - \eps}$ and  
where each of $f$ and $g$ can be a higher divisor function $d_k$, $k \geq 2$, or the von Mangoldt function\footnote{As 
the authors point out in \cite{MRT}, their methods do in fact apply to functions $f$ and $g$ stemming from a larger class of multiplicative functions.}.

To describe the known asymptotic results for \eqref{eq:aim} that apply to larger classes of multiplicative functions, we recall that 
a bounded multiplicative function $h: \NN \to \CC$ is said to be \emph{pretentious}, if 
there exists $t_h \in \RR$ and a Dirichlet character $\chi_h$ such that 
\begin{equation}\label{eq:pret}
\sum_{p \text{ prime}} \frac{1 - \Re (h(p)\chi_h(p)p^{it_h})}{p}< \infty.
\end{equation}
Frantzikinakis and Host \cite{FH} obtained asymptotic results for \eqref{eq:aim} 
with $\mathfrak{K}=(0,1]^s$ that apply to \emph{all} bounded complex-valued multiplicative functions. 
In the case where their asymptotic formula for \eqref{eq:aim} carries a genuine main term,
all multiplicative functions $h_i$ have the property that $|N^{-1} \sum_{n \leq N} h_i(n)| \asymp 1$. 
By Hal\'asz's theorem \cite{Halasz}, the latter condition implies that all $h_i$ are pretentious.
Klurman \cite{Kl} recently succeeded in asymptotically evaluating correlations of the form
$\sum_{n \leq N} h_1(P_1(n)) \dots h_r(P_r(n))$
for bounded pretentious multiplicative functions $h_1, \dots, h_r$ and 
for arbitrary polynomials $P_1, \dots, P_r \in \ZZ[X]$.
In \cite{KM}, Klurman and Mangerel obtain explicit main and error terms for the asymptotic result from \cite{FH}.

Our aim here is to leave the pretentious setting and prove a general asymptotic result for \eqref{eq:aim} that applies 
to unbounded multiplicative functions such as $d$ or $r$, as well as to functions with small mean values. 
To illustrate the latter goal, let us mention two examples of functions that the result applies to.
Firstly, the result applies, yielding a genuine main term, when we let each $h_i$ in \eqref{eq:aim} be a function of the 
form $h_i: n \mapsto \delta^{\omega(n)}$ where $\delta \in (0,1)$ and $\omega(n) = \#\{p:p|n\}$.
Such a function $h_i$ has a small mean value in the sense that 
$N^{-1}\sum_{n \leq N} h_i(n) \asymp (\log N)^{-1 + \delta} = o(1)$ and
the main term in our result will carry the correct logarithmic factor to capture this behaviour. 
A second example of an admissible function of small mean is $h_i:n\mapsto b(n)$, 
where $b$ denotes the characteristic function of sums of two squares.
In this case it follows from Landau's work \cite{landau} that
$N^{-1}\sum_{n \leq N} h_i(n) \sim (\log N)^{-1/2}$.
To handle functions with small mean values, we prove an asymptotic formula with an error term that, instead of being $o(1)$,
reflects the order of the mean values $N^{-1}\sum_{n \leq N} |h_i(n)|$ for $1 \leq i \leq r$.
This allows us to obtain asymptotic results with genuine main term in, amongst others, the above two examples.
We proceed by introducing the precise classes of multiplicative functions our main result will apply to.

\subsection{The classes $\mathcal{F}$ and $\mathcal{F}^*$ of multiplicative functions} \label{ss:the-class-F}
Throughout this paper we write
$$
S_h(x)= 
\frac{1}{x} 
\sum_{\substack{1\leq n \leq x}} 
h(n), \qquad
S_h(x;q,a)= 
\frac{q}{x} 
\sum_{\substack{1\leq n \leq x \\ n\equiv a \Mod{q} }} 
h(n)$$
and
\begin{align} \label{eq:def-E}
 E_{h}(x;q) &= \frac{1}{\log x} \frac{q}{\phi( q )}
\prod_{p \leq x, p \nmid q} \left(1 + \frac{|h(p)|}{p} \right)
\end{align}
for any $q,a \in \ZZ$, $q \not=0$ and $x\geq 1$. 
The first two conditions in the following definition should be compared to the conditions appearing in the 
results by Shiu \cite{shiu}, Nair \cite{nair} and Nair--Tenenbaum \cite{nair-tenen} mentioned in the 
preceding discussion on upper bounds.
\begin{definition} \label{d:M}
Let $\mathcal{F}$ denote the class of multiplicative functions
$h: \NN \to \CC$ with the following properties: 
\begin{itemize}
 \item[(i)] there exists a constant $H \geq 1$, depending on $h$, such that 
$|h(p^k)| \leq H^k$ for all primes $p$ and all integers $k \geq 1$;
 \item[(ii)] $|h(n)| \ll_{\eps} n^{\eps}$ for all $n \in \NN$ and $\eps > 0$; 
 \item[(iii)] there exists a positive constant $\alpha_h$ such that 
$$
\frac{1}{x}
\sum_{p \leq x} |h(p)| \log p \geq \alpha_h 
$$
for all sufficiently large $x$; and
\item[(iv)] $h$ \emph{has a stable mean value in arithmetic progressions, i.e.:}
For every constant $C>0$, there exists a function 
$\varphi = \varphi_C$ with $\varphi(x) \to 0$ as $x\to \infty$ such the estimate
$$
S_{h}(x';q,A)
= S_{h}(x;q,A)
 + O\Big(\varphi(x) E_{h}(x;q)\Big)
$$
holds for all $x \geq 2$ and $x' \in (x(\log x)^{-C},x)$,
and for all progressions $A \Mod{q}$ with $\gcd(q,A)=1$ and with a modulus
$q \in [1, (\log x)^C )$ that is divisible by the primorial $\prod_{p<\log \log x}p$.
\end{itemize}
\end{definition}

Together with $\mathcal{F}$, we consider the following slightly larger class $\mathcal{F}^* \supset \mathcal{F}$ 
that contains $n^{it}$-twists of elements of $\mathcal{F}$. 
In other words, condition (iv) only needs to hold for an $n^{it}$-twist of $f$, 
but not necessarily for $f$ itself.

\begin{definition} \label{d:F}
 Let $\mathcal{F}^*$ denote the class of multiplicative functions
$h: \NN \to \CC$ that satisfy conditions (i), (ii) and (iii) of Definition \ref{d:M},
as well as the following variant of condition~(iv):
\begin{itemize}
\item[(iv)']
For every constant $C>0$ there exists a function $\varphi = \varphi_C: \RR_{>0} \to \RR_{\geq 0}$ 
with $\varphi(x) \to 0$ as $x\to \infty$ and, given $C>0$ and $x > 1$, there exists 
$t_x \in \RR$ with $|t_x| \leq 2 \log x$ such that the function $h^* = h^*_x : n \mapsto h(n) n^{-it_x}$
satisfies the estimate:
$$
S_{h^*}(x';q,A)
= S_{h^*}(x;q,A)
 + O\Big(\varphi(x) E_{h^*}(x;q)\Big)
$$
for all $x' \in (x(\log x)^{-C},x)$ and all progressions $A \Mod{q}$ with $\gcd(q,A)=1$ and with a modulus
$q \in [1, (\log x)^C )$ that is divisible by the primorial $\prod_{p<\log \log x}p$.
\end{itemize}
\end{definition}

The classes $\mathcal{F}$ and $\mathcal{F}^*$ of functions are closely related to the classes 
$\mathcal{F}_H$ and $\mathcal{F}_{H,n^{it}}$ studied in \cite{lmm}.
Here, we impose the additional assumption (ii) which states that the growth of the function $h$ is bounded like 
the growth of the divisor function.
Out of the four conditions above, the last one is perhaps the least intuitive one and the one that is most difficult to 
check in an application.
Conditions (iv) and (iv)' have been studied in detail in \cite[\S4]{lmm}, where we prove several sufficient 
conditions for them. These ought to be 
significantly easier to check in many applications as they either only involve the values of $h$ at primes 
or only require to bound the correlation of $h$ with certain Dirichlet characters.
To conclude our discussion of the functions classes relevant to this paper let us record some explicit examples of functions 
satisfying the abstract set of rules defining $\mathcal{F}$.

\begin{examples}
Elements of the class $\mathcal{F}$ include the following functions (cf.\ \cite[\S \S 4.3-4.2]{lmm}): 
\begin{enumerate}
 \item the general divisor functions
$d_k(n) = \1* \dots * \1(n) = \1^{*k}(n)$ for $k\geq 2$, 
 \item the function
$\frac{1}{4}r(n) = \frac{1}{4}\#\{(x,y) \in \ZZ^2: x^2+y^2 = n\}$ which, up to the factor $\frac{1}{4}$, 
counts representations as a sum of two squares,
 \item the characteristic function $b(n)$ of the set of sums of two squares,
 \item the function $n \mapsto \delta^{\omega(n)}$ belongs to $\mathcal{F}$ if
$\delta \in (0,2)$ and $\omega(n) = \#\{ p \text{ prime}: p|n\}$,
 \item the function $n \mapsto |\lambda_f (n)|$, where $\lambda_f(n)$ describes the normalised Fourier 
 coefficients of a primitive holomorphic cusp form.
\end{enumerate}
\end{examples}
  
\subsection{Main result and underlying method of proof}
The main result of this paper (Theorem \ref{t:main} below) establishes an asymptotic formula for \eqref{eq:aim} under the 
assumption that $h_1, \dots, h_r$ all belong to $\mathcal{F}^*$.
The proof of this result proceeds via Green and Tao's nilpotent Hardy--Littlewood method (see \cite{GT-linearprimes}), 
which is a method consisting of two main parts. 
One of them requires us to establish that a `$W$-tricked' version of any $h \in \mathcal{F}^*$ is orthogonal 
to arbitrary nilsequences.
The other part amounts to showing that this $W$-tricked version of $h$ has a majorant function 
which is pseudo-random in the sense of \cite{GT-linearprimes} and of the 
`correct' average order in a sense we will specify later.
The first task, namely that of finding a suitable $W$-trick and obtaining non-trivial estimates 
for the correlation of the $W$-tricked version of $h$ and nilsequences, 
has been established in \cite{lmm} for all $h \in \mathcal{F}^*$.
The second task, namely the construction of correct-order pseudo-random majorants, 
will be the main focus of this work.
The fact that we are seeking a majorant function for $h$ means that our work is closely related to the sequel
of papers studying upper bounds on expressions of the form \eqref{eq:main} 
that were referred to at the very start of this introduction.

\subsection{Overview}
This paper is organised as follows.
In Section \ref{s:main}, we give the precise statements of our main result and an easier special case of it.
Section \ref{s:corollaries} describes a reformulation of the main result in terms of short character sums, 
allowing one in special cases to deduce local-global principles.
As an application, we recover a result of Frantzikinakis--Host~\cite{FH} concerning the case where in 
\eqref{eq:aim} all the arithmetic functions $h_1, \dots, h_r$ are `pretentious'.
Sections \ref{s:minor-arc} and \ref{s:W-reduction} are concerned with the more technical parts of the proof
of the main result: 
Section \ref{s:minor-arc} (conditionally) proves a `$W$-tricked' version of the main result, in which 
each of the multiplicative functions $h_j$ from \eqref{eq:main} is replaced by a function of the form 
$n \mapsto h_j(W_j n + A_j)$ for a suitable integer $W_j$ and a reduced residue $A_j \Mod{W_j}$.
To prove this result, we preliminarily assume the existence of families of pseudorandom majorants for the $W$-tricked functions
$n \mapsto h_j(W_j n + A_j)$.
In Section \ref{s:W-reduction}, we then deduce the main theorem from its just established $W$-tricked version.

Sections \ref{s:majorants-intro}--\ref{s:correlation-condition} are independent from all preceding sections, 
apart from the definitions made in this introduction, and contain the main new input of this paper,
namely the construction of the required families of pseudorandom majorants for all ($W$-tricked versions of the) 
multiplicative functions from $\mathcal{F}^*$.
The construction itself takes place in Sections \ref{s:majorants-intro}--\ref{s:majorant}. 
Here, the main difficulty lies in establishing suitable majorant functions of the correct average order for bounded 
multiplicative function, see Section \ref{s:majorant}. 
In Section \ref{s:linearform} we recall and introduce all relevant concepts around the notion of families of 
pseudorandom majorants.
The task of checking that the constructed majorant functions do indeed give rise to a family of pseudorandom 
majorants is carried out in Sections \ref{s:linearform} and \ref{s:correlation-condition}.

Section \ref{s:corollary-proofs}, finally, contains the proofs of the results from Section \ref{s:corollaries}, and 
Section \ref{s:applications} discusses the application of our main result to the arithmetic functions
$h_j(n) = |\lambda_{f_j}(n)|$, where $\lambda_{f_j}(n)$ denotes the normalised Fourier coefficients of 
a primitive holomorphic cusp form $f_j$.

\section{Statement of main result} \label{s:main}
This section contains the precise statement of our main result, which we present subsequently to that of 
an easier, but important, special case.

Let $w:\RR_{>0} \to \RR_{>0}$ be any function such that
$$\frac{\log \log x}{\log \log \log x} < w(x) \leq \log \log x$$ 
for all sufficiently large $x$, and define
$W(x) = \prod_{p\leq w(x)} p$.
The asymptotic formula below features an integer multiple $\widetilde{W}(N)$ of $W(N)$ with the property
that for each $h_j \in \mathcal{F}^*$ appearing in the statement, the mean value 
$S_{h_j}(N;q\widetilde{W}(N),A)$ in progressions $A \Mod{q\widetilde{W}(N)}$  
shows a certain amount of regularity as $1 \leq q \leq (\log N)^E$ varies over small integers
and $A$ varies over reduced residues, i.e. $\gcd(A, q\widetilde{W}(N))=1$.
The type of regularity we require is that $S_{h_j}(N;q\widetilde{W}(N),A) \sim S_{h_j}(N;\widetilde{W}(N),A)$
for $A$ and $q$ as above.
The existence of such values of $\widetilde{W}(N)$ was established in \cite{lmm}
and any function of the form $n \mapsto h_j(\widetilde{W}(N) n + A)$ for $A$ with $\gcd(A,\widetilde{W}(N))= 1$ 
will be referred to as a \emph{$W$-tricked} version of $h_j$.
By working with such $W$-tricked versions of each $h_j$, one removes the potentially irregular contribution from small primes 
(i.e.\ primes dividing $\widetilde{W}(N)$), the contribution of which can then be handled separately.

We begin by stating the special case of the main result where $h_1, \dots, h_r \in \mathcal{F}$.
The restriction to $\mathcal{F}$ simplifies the asymptotic formula significantly. 
This version of the result applies to, amongst others, all the examples mentioned at the end of Section \ref{ss:the-class-F}.
\begin{theorem}\label{t:main-F}
Let $N>1$ be an integer parameter and let $r,s, L \geq 2$ and $B_0 \geq 0$ be fixed integers.
Suppose further that we are given the following data:
\begin{itemize}
 \item[(i)] Let $h_1, \dots, h_r \in \mathcal{F}$ be multiplicative functions and let $H>1$ and $\alpha>0$
be constants such that condition (i) and (iii) of Definition \ref{d:M} hold with the given value of $H$ and 
with $\alpha_h = \alpha$ for every $h \in \{h_1, \dots, h_r\}$.
 \item[(ii)] Let $\psi_1, \dots, \psi_r \in \ZZ[X_1, \dots X_s]$ be linear forms that are pairwise 
linearly independent over $\QQ$.
Suppose that the coefficients of the $\psi_j$ are all bounded in absolute value by $L$.
For each value of $N$, let $a_1(N), \dots, a_r(N) \in [-LN(\log N)^{-B_0},LN(\log N)^{-B_0}]$ be integers and 
define for each $j \in \{1, \dots, r\}$ the linear polynomial
$\vphi_j(\n) = \vphi_{j,N}(\n)=  \psi_j(\n) + a_j(N)$.
 \item[(iii)]
Suppose that $\fK \subset [-1,1]^s$ is convex, $\vol(\fK) > 0$, and that
$0 < \vphi_j(\n) \leq N$ for all $1\leq j \leq r$, all $\n \in N\fK$ and all sufficiently 
large $N$.
\end{itemize}
Then, given any $\eps > 0$, there are positive constants $B_1, B_2 = O_{r,s,B_0, H, L, \alpha, \eps}(1)$ and a function 
$\widetilde W: \RR_{>0} \to \NN$, depending on $h_1, \dots, h_r$ but not on the information from (ii) and (iii), such that 
\begin{enumerate}
\item[(1)] $\widetilde{W}(x)$ is divisible by $W(x)=\prod_{p < w(x)} p$ for all $x > 0$, 

\item[(2)] $\widetilde{W}$ satisfies the bound $\widetilde W(x) \leq (\log x)^{B_1}$ for all sufficiently large $x$, and 

\item[(3)] as $N \to \infty$, the following asymptotic formula holds uniformly for 
all $\vphi_1, \dots, \vphi_r$ as above and all $T \in [N(\log N)^{-B_0}, N]$:
\begin{align}\label{eq:main'}
\nonumber
&\frac{1}{\vol T\fK}
\sum_{\n \in \ZZ^s \cap T\fK} \prod_{i=1}^r 
h_i(\vphi_i(\n))
~= \\
\nonumber
&\sum_{\substack{w_1, \dots, w_r 
 \\ p|w_i \Rightarrow p| \widetilde W
 \\ w_i \leq (\log N)^{B_2}}}
\sum_{\substack{A_1,\dots,A_r \\ \in (\ZZ/\widetilde W \ZZ)^*}}
\Bigg(\prod_{i=1}^r 
h_i(w_i)S_{h_i}\Big(N;\widetilde W,A_i\Big)\Bigg)~
\beta_{\bphi}(w_1A_1, \dots, w_rA_r)\\
&\qquad \qquad  +
(\eps +o(1))\left( \frac{1}{(\log N)^r} 
\prod_{j=1}^r  \prod_{p \leq N} 
\left(1 + \frac{|h_i(p)|}{p} \right)
\right)
~,
\end{align}
where $\widetilde W=\widetilde W(N)$, $w = \lcm(w_1, \dots, w_r)$ and
\begin{equation} \label{eq:beta_phi}
\beta_{\bphi}(w_1A_1, \dots, w_rA_r)
=\frac{1}{(w\widetilde W)^s}
\sum_{\substack{\v \in \\ (\ZZ/w \widetilde W \ZZ)^s}}
\prod_{j=1}^r 
\1_{\vphi_j(\v) \equiv w_j A_j ~(w_j \widetilde W)}.
\end{equation}
\end{enumerate}
\end{theorem}

\begin{rem}[on removing the $\eps$]
 The dependence on $\eps$ is an artefact of the generality of the result.
 In cases where the right hand side of \eqref{eq:main'} can be reformulated as
 a closed expression that is independent of $\widetilde{W}$ and $B_2$, this dependence can be removed.
 Such a reformulation always exists if for each 
 $j \in \{1, \dots, r\}$ the set of primitive 
 characters $\chi$ for which $x^{-1} |\sum_{n \leq x} h_j(n) \chi(n)|$ is (close to) maximal does not 
 depend on $x$ as soon as $x$ is sufficiently large.
 (This assumption allows one to replace $S_{h_j}(N, \widetilde{W}, A_j)$ by a short character sum 
 involving a fixed set of characters.)
 We will describe this set-up in Theorem~\ref{t:main'}.
\end{rem}

\begin{rem}[on the parameter $T$]
 In applications it is often essential that the one can vary the cut-off parameter slightly while 
 preserving the shape of the main term as well as uniformity in the error term. 
 For this reason we introduced a second cut-off parameter $T$ that is closely related to the value of $N$ 
 which determines $\widetilde{W}(N)$.
\end{rem}

The full version of our main result extends the class of admissible functions to $\mathcal{F}^*$.
Compared with the statement of Theorem \ref{t:main-F}, part $(i)$ from the assumptions and 
the asymptotic formula \eqref{eq:main'} itself have to be adjusted. 
In order to simplify the asymptotic formula, we also slightly change the assumptions in part $(ii)$. 
This leads to:
\begin{theorem}[Main Theorem]  \label{t:main}
Let $N>1$ be an integer parameter, let $r,s, L \geq 2$ and $B_0 \geq 0$ be integers, let $\delta \in (0,1)$ be a parameter,
fix a value of $c \in (0,1)$ and let $C>1$ be a constant.
Suppose further that we are given the following data:
\begin{itemize}
 \item[(i)] Let $h_1, \dots, h_r \in \mathcal{F^*}$ be multiplicative functions and let $H>1$ and $\alpha>0$
be constants such that condition (i) and (iii) of Definition \ref{d:M} hold with the given value of $H$ and 
with $\alpha_h = \alpha$ for every $h \in \{h_1, \dots, h_r\}$.
For every $j \in \{1, \dots, r\}$, let $t_j = t_{j,N}$, $|t_j| \leq 2 \log N$, 
be such that the function $$h_j^*:n \mapsto h_j(n)n^{-it_j}$$
has the property described in Definition \ref{d:F} with $x=N$ and the given value of $C$.
 \item[(ii)] Let $\psi_1, \dots, \psi_r \in \ZZ[X_1, \dots X_s]$ be linear forms that are pairwise 
linearly independent over $\QQ$. 
Suppose that the coefficients of the $\psi_j$ are all bounded by $L$ in absolute value.
For each value of $N$, let $a_1(N), \dots, a_r(N) \in [-N^c,N^c]$ be integers and 
define the system of linear polynomials $\vphi_j(\n) =  \psi_j(\n) + a_j(N)$, $1 \leq j \leq r$.
 \item[(iii)]
Suppose that $\fK \subset [-1,1]^s$ is convex, $\vol(\fK) > 0$, and that
$\vphi_j(\n) \in (0, N]$ for all $1\leq j \leq r$, all $\n \in N\fK$ and all sufficiently 
large $N$.
\end{itemize}
Then there are positive constants $B_1, B_2 = O_{r,s,B_0,H,L,\alpha,\delta}(1)$, and a function 
$\widetilde W: \RR_{>0} \to \NN$, depending on $h_1, \dots, h_r$ but not on the information from (ii) and (iii), such that 
\begin{enumerate}
\item[(1)] $\widetilde{W}(x)$ is divisible by $W(x)=\prod_{p < w(x)} p$, 

\item[(2)] $\widetilde{W}$ satisfies the bound $\widetilde W(x) \leq (\log x)^{B_1}$ for all $x$, and 

\item[(3)] as $N \to \infty$, the following asymptotic formula holds uniformly 
for all $\vphi_1, \dots, \vphi_r$ as above and
all $T \in [N(\log N)^{-B_0}, N]$, provided $C$ (see assumption (i)) is sufficiently 
large with respect to $B_0, H, r, s, L, \alpha$ and $\delta$:
\begin{align}\label{eq:main}
&\frac{1}{\vol T\fK}
\sum_{\n \in \ZZ^s \cap T\fK} 
\prod_{i=1}^r 
h_i(\vphi_i(\n)) \\
\nonumber
&= \frac{T^{i(t_1 + \dots + t_r)}}{\vol \fK}
\int_{\fK} \vphi_1(\x)^{it_1} \dots \vphi_r(\x)^{it_r} \d \x \\
\nonumber
&\qquad \qquad
\times \sum_{\substack{w_1, \dots, w_r 
 \\ p|w_i \Rightarrow p| \widetilde W
 \\ w_i \leq (\log N)^{B_2}}}
\sum_{\substack{A_1,\dots,A_r \\ \in (\ZZ/\widetilde W \ZZ)^*}}
\Bigg(\prod_{j=1}^r 
 h_j^*(w_j) S_{h_j^*}\Big(N;\widetilde W,A_j\Big)\Bigg)~
 \beta_{\bphi}(w_1A_1, \dots, w_rA_r) \\
 \nonumber
& \quad  +
\left(\kappa(\delta) + o_{N\to\infty}(1)\right)
\left( \frac{1}{(\log N)^r} 
\prod_{j=1}^r  \prod_{p \leq N} 
\left(1 + \frac{|h_j(p)|}{p} \right)
\right)
~,
\end{align}
where $\kappa(\delta) \to 0$ as $\delta \to 0$, and where
$\widetilde W=\widetilde W(N)$, $w = \lcm(w_1, \dots, w_r)$ and
\begin{equation} \label{eq:beta_phi-0}
\beta_{\bphi}(w_1A_1, \dots, w_rA_r)
=\frac{1}{(w\widetilde W)^s}
\sum_{\substack{\v \in \\ (\ZZ/w \widetilde W \ZZ)^s}}
\prod_{j=1}^r 
\1_{\vphi_j(\v) \equiv w_j A_j ~(w_j \widetilde W)}.
\end{equation}
\end{enumerate}
The main term in \eqref{eq:main} can be subsumed in the error term as soon as for one 
$j \in \{1, \dots, r\}$ the sequence of 
functions $h_j^* = h_{j,N}^*:n \mapsto h_j(n)n^{-it_{j,N}}$ satisfies 
$|S_{h_{j,N}^*}(N)|= o_{N \to \infty}(S_{|h_j|}(N))$.
\end{theorem}

\section{A short character sum version of the main theorem and corollaries} \label{s:corollaries}
This section describes a reformulation of Theorem \ref{t:main} in terms of short character sums,
allowing one in special cases to reinterpret the main term in the asymptotic formula as a product 
over local factors.

The starting point for such a reformulation lies in the observation that for many multiplicative 
functions of interest, the asymptotic behaviour of
\begin{equation} \label{eq:character-sum-expansion}
S_{h}(T;\widetilde W(N), A)
= 
\frac{\widetilde W(N)}{\phi(\widetilde W(N))} 
\sum_{\chi \Mod{\widetilde W(N)}}
\chi(A)
\frac{1}{T}
\sum_{n\leq T} h(n) \bar \chi(n)
\end{equation}
is determined by only finitely many characters of bounded conductor,
allowing for the main term of the asymptotic formula \eqref{eq:main} to be simplified.
Before we consider this problem in general, let us state the special case 
which assumes that $S_{h^*}(T;\widetilde W, A)$ is determined by the trivial character modulo
$\widetilde W$ for each function $h = h_j$ appearing in the correlation.

\begin{corollary} \label{cor:chi_0}
Let $C>1$ and $h_1, \dots h_r$ be as in Theorem~\ref{t:main} and assume that, as $N \to \infty$,
$$
S_{h^*}(N; q, A)
= 
\frac{q}{\phi(q)} 
\frac{1}{N}
\sum_{n\leq N} h^*(n) \chi_0(n)
+ o(E_{h}(N;q)),
$$
for all $h \in \{h_1, \dots h_r\}$, all $q \leq (\log N)^C$ with $W(N)|q$, all reduced residues $A \Mod{q}$,
and where $\chi_0$ denotes the trivial character modulo $q$.
Assuming, in addition, that all the assumptions from Theorem~\ref{t:main} hold,
we have the following asymptotic formula:
\begin{align*}
\frac{1}{\vol N\fK}
\sum_{\n \in \ZZ^s \cap N\fK} 
\prod_{j=1}^r 
h_j(\vphi_j(\n)) 
&=
\beta_{\infty}(N) \prod_{p \leq N} \beta_p
+ o\Bigg(
\frac{1}{(\log N)^r} 
\prod_{j=1}^r  \prod_{p \leq N} 
\left(1 + \frac{|h_j(p)|}{p} \right)
\Bigg)
~, 
\end{align*}
as $N \to \infty$,
where
\begin{align} \label{eq:beta_P(B)-cor}
\beta_{p}&=
\Bigg(\prod_{i=1}^r \sum_{k\geq 0} \frac{|h_i(p^k)|}{p^k}\Bigg)^{-1} \\
\nonumber
&\qquad \times \sum_{\substack{ a_1, \dots, a_r \\ \in \NN_0}}
\Bigg( \prod_{j=1}^r h_j(p^{a_j}) \Bigg)
\Bigg( \lim_{m \to \infty} \frac{1}{p^{ms}} \sum_{\substack{\v \in \\ (\ZZ/p^m \ZZ)^s}}
\prod_{j=1}^r 
\Big(\1_{p^{a_{j}}|\vphi_{j}(\v)} - \1_{p^{a_{j}+1}|\vphi_{j}(\v)}\Big)
\Bigg)
\end{align}
and where
$$
\beta_{\infty}(N)
= \frac{N^{i(t_1 + \dots + t_r)}}{\vol \fK}
\int_{\fK} \vphi_1(\x)^{it_1} \dots \vphi_r(\x)^{it_r} \d \x
~ \prod_{j=1}^r S_{|h_j|}(N).
$$
Moreover, we have
$$S_{|h_j|}(N) \asymp \frac{1}{\log N} \prod_{p \leq N} \Big(1 + \frac{|h_j(p)|}{p} \Big)$$
and
$$\beta_{p} = 
\prod_{j=1}^r
\left(1+\frac{|h_j(p)|}{p} \right)^{-1}
\left(1+\frac{h_j(p)}{p} \right)
+ O_{r,H,L}(p^{-2}).$$
Thus, $\beta_{p} = 1 + O_{H,r}(p^{-2})$ and $\beta_{\infty}(N) = \prod_{j=1}^r S_{|h_j|}(N)$ if all the $h_j$ are non-negative.
\end{corollary}

As an example of a function for which \eqref{eq:character-sum-expansion} is not determined 
by $\chi_0$ but nonetheless by finitely many characters, we may consider the function  
$h(n) = \frac{1}{4}r(n) = \1 * \chi_{-1}(n)$, where $\chi_{-1}$
denotes the non-principal character modulo $4$ and where $r$ is the 
function that counts  representations by sums of two squares.

In situations where \eqref{eq:character-sum-expansion} is determined by only finitely many 
characters, our main theorem can be reformulated in terms of short 
character sums, and Theorem \ref{t:main'} below is such a reformulation.
The fact that the character sum can be truncated relies on the following 
consequence of the repulsion of characters phenomenon described in \cite{BGS} 
and refined in \cite{GS-book,GHS}.

\begin{proposition}[Set of characters] \label{p:character-set}
Suppose that $h: \NN \to \CC$ is multiplicative and satisfies conditions (i)--(iii) of Definition \ref{d:M}.
Let $t \in \RR$ and set $h^*: n \mapsto h(n)n^{-it}$.
Let $H\geq 1$ and $\alpha_{h}>0$ be such that part (i) and (iii) of Definition~\ref{d:M} hold, and
define a multiplicative function $h': \NN \to \CC$ by setting $h'(p) = h^*(p)/H$ at primes and
$h'(p^k) = 0$ if $k\geq 2$.
Further, let $\eps = \frac{1}{2}\min(1, \alpha_h/H)$ and define the integer $k=\lceil \eps^{-2} \rceil$. 
Let $C >0$ be a parameter, consider for any $x > 1$ the set of primitive characters of conductor at most 
$(\log x)^{C}$, and enumerate them as $\chi_1, \chi_2, \dots$ in such way that the averages 
$|\frac{1}{x}\sum_{n \leq x} \chi_i(n) h'(n)|$ are in non-increasing order as $i$ increases. 
We let $\mathcal{E}(x,C) := \{ \chi_1, \chi_2, \dots, \chi_{k}\}$ denote the set of the first $k$ characters.

Define for any given value of $N>1$ the set $\mathcal{E}_N = \bigcup_{1 \leq j \leq k'} \mathcal{E}(N^{1/2^j},C),$ 
where $k' = \lceil \log_2(4H) \rceil$.
If $\mathcal{E}_N(q)$ denotes the set of characters modulo $q$ induced from elements of $\mathcal{E}_N$,
then
\begin{align} \label{eq:GS-corollary}
S_{h^*}(y,q,A) = 
\frac{q}{\phi(q)} 
\sum_{\chi \in \mathcal{E}_N(q)}
\chi(A)
\frac{1}{y}
\sum_{n\leq y} h^*(n) \bar \chi(n)
+o(E_{h}(N;q))
\end{align}
uniformly for all $q \leq (\frac{1}{8H}\log N)^{C}$ with $q|W(N)$, all
reduced residues $A \Mod{q}$ and $N^{1/2} \leq y \leq N$.

Moreover, we have $h \in \mathcal{F}^*$, provided for every $C>0$ there exists a function 
$\varphi_C: \NN \to \RR_{\geq 0}$ with $\varphi_C(x) \to 0$ as $x \to 0$
such that
\begin{align} \label{eq:main'-assumption}
\frac{1}{T}\sum_{n \leq T} \chi(n) h^*(n)
= \frac{1}{N}\sum_{n \leq N} \chi(n) h^*(n)
+ O\bigg( \frac{\varphi_C(N)}{\log N} \exp\bigg(\sum_{p\leq N} \frac{|\chi(p) h(p)|}{p}\bigg) \bigg)
\end{align}
for all $N>1$, $T \in (N(\log N)^{-C},N]$ and for every $\chi \in \mathcal{E}_N(q)$ with 
$q \in (1, (\log N)^C]$ and $W(N)|q$.
\end{proposition}

\begin{rem}
 Note that the statement above simplifies if $h'$ is such that the sets
 $\mathcal{E}(x,C)$ are independent of $x$ as soon as $x$ is sufficiently large.
 In this case $\mathcal{E}_N$ is just given by the fixed set 
 $\mathcal{E}(x,C)=\{\chi_1, \dots, \chi_k \}$ of the first $k$ characters in the sequence
 for any $C$ and any sufficiently large $x$.
\end{rem}

The finite set of characters picked out by the proposition above may still be larger than strictly necessary
in order for \eqref{eq:GS-corollary} to hold, 
and results by Elliott~\cite{elliott}, Tenenbaum~\cite{tenen} or Mangerel~\cite{mangerel} relating the mean value of a multiplicative 
function $h$ to that of $|h|$ can be used to further restrict the set $\mathcal{E}_N(q)$.
To illustrate the character of these comparison results, we include the following qualitative lemma,
which is a straightforward consequence of Elliott \cite[Theorem 2 and 4]{elliott}.
In fact, the first part of this lemma follows already from earlier work of 
Elliott and Kish \cite[Lemma 21]{elliott-kish}.
\begin{lemma}[Elliott, Elliott--Kish] \label{l:elliott}
Suppose $h$ is a multiplicative function that satisfies conditions (i) and (iii) 
from Definition \ref{d:M} and $\sum_{p \leq H} \sum_{k \geq 2} |h(p^k)|p^{-k} < \infty$.
Then
\begin{equation} \label{eq:elliott-kish}
 S_{|h|}(x) 
\asymp \frac{1}{\log x} \exp\bigg(\sum_{p \leq x} \frac{|h(p)|}{p}\bigg).
\end{equation}
Moreover, $|S_{h}(x)| =o(S_{|h|}(x))$ unless there exists $t \in \RR$ such that 
\begin{equation}\label{eq:elliott}
\sum_{p \text{ prime}} \frac{|h(p)| - \Re (h(p)p^{it})}{p}< \infty.
\end{equation}
More precisely, if there exists $t$ as above, then 
\begin{equation}\label{eq:elliott-precise}
S_{h}(x) 
= S_{|h|}(x) \frac{x^{-it}}{1-it} 
\prod_{p\leq x} \left(
\frac{1+h(p)p^{it-1}+\dots}{1 + |h(p)| p^{-1} + \dots}\right)
 + o(S_{|h|}(x)).
\end{equation}
\end{lemma}
Under stronger assumptions on the behaviour of
$$
\sum_{y_1 < p \leq y_2} p^{-1}(|h(p)| - \Re (h(p)p^{it})),
\quad \text{or} \quad \sum_{y_1 < p \leq y_2} p^{-1}(|h(p)| - \Re (h(p) \chi(p) p^{it})),
$$
for certain ranges of $y_1, y_2$, Tenenbaum's Th\'eor\`eme 1.3 from \cite{tenen} yields an asymptotic formula 
for $S_{h}(x)$ (or $S_{h \chi}(x)$) in terms of $S_{|h|}(x)$ with \emph{explicit} error terms.
This result allows one to consider twists of $h$ by characters of conductor depending on $x$ and can therefore
be used to check the conditions of the following result in suitable applications.

\begin{theorem}\label{t:main'}
Let $N$, $T$ and $\vphi_1, \dots, \vphi_r$ be as in Theorem \ref{t:main}.
For each $j \in \{1, \dots, r\}$, let $h_j: \NN \to \CC$ be a multiplicative function that satisfies 
conditions (i)-(iii) of Definition~\ref{d:M}, let $t_j \in \RR$, and suppose that 
$h_j^*: n \mapsto h_j(n)n^{-it_j}$ satisfies
condition \eqref{eq:main'-assumption} from Proposition~\ref{p:character-set}. 
(In particular, $h_j \in \mathcal{F}^*$.)
Let $\widetilde{W}: \NN \to \NN$ be as in Theorem \ref{t:main}.
For each $j \in \{1, \dots, r\}$ and every sufficiently large $N$, consider for $h=h_j$ the set 
$\mathcal{E}_{j,N}:=\mathcal{E}_{N}$ of primitive characters described in Proposition~\ref{p:character-set}, and let
$\mathcal{E}_j^*$ denote the set of characters modulo $\widetilde{W}(N)$ induced from elements of
$\mathcal{E}_{j,N}$.
Suppose further that for each $j$ and $N$ there is a subset $\mathcal{E}_{j,N}^+ \subset \mathcal{E}_{j,N}$ 
and a function $\eps(x)$ tending to zero as $x \to \infty$ such that
\begin{align} \label{eq:tenen-assumpt-1}
S_{h^*_j\chi^*}(y)
= S_{|h_j|}(y)
\prod_{p\leq y} \left(
\frac{\sum_{p^k \leq y} h^*_j(p^k)\chi^*(p^k)p^{-k}}{\sum_{p^k \leq y} |h_j(p^k)|p^{-k}}\right)
 + O\bigg( \frac{\eps(y)}{\log y}\exp\Big(\sum_{p\leq y} \frac{|h_j(p)|}{p}\Big)\bigg)
\end{align}
for all $y\in [N^{1/2},N]$ and for every $\chi^* \in \mathcal{E}_j^*$ induced from some 
$\chi \in \mathcal{E}_{j,N}^+$, and 
\begin{align} \label{eq:tenen-assumpt-2}
S_{h^*_j\chi^*}(y)
\ll  \frac{\eps(y)}{\log y}\exp\Big(\sum_{p\leq y} \frac{|h_j(p)|}{p}\Big)
\end{align}
for all $y\in [N^{1/2},N]$ and for every $\chi^* \in \mathcal{E}_j^*$ induced from some
$\chi \in \mathcal{E}_{j,N} \setminus \mathcal{E}_{j,N}^+$. Then:

(i) As $N \to \infty$, the following asymptotic formula holds for $T \in [N(\log N)^{-B_0},N]$:
\begin{align} \label{eq:expansion-1}
\frac{1}{\vol T\fK}
\sum_{\n \in \ZZ^s \cap T\fK} 
\prod_{j=1}^r 
h_j(\vphi_j(\n)) 
=~&\beta_{\infty}(T,N)
\sum_{\substack{\chi_1, \dots, \chi_r \\ \chi_j \in \mathcal{E}_{j,N}}}
\prod_{p \leq N} \beta_{p}(\chi_1, \dots, \chi_r) 
+ o\Bigg( 
\prod_{j=1}^r E_{h_j}(N,1)
\Bigg)
~,  
\end{align}
where
$$
\beta_{\infty}(T,N)
= \frac{T^{i(t_1 + \dots + t_r)}}{\vol \fK}
\int_{\fK} \vphi_1(\x)^{it_1} \dots \vphi_r(\x)^{it_r} \d \x
~ \prod_{j=1}^r S_{|h_j|}(N),
$$
while
\begin{align*}
\beta_p(\chi_1, \dots, \chi_r) =&
\Bigg(\prod_{i=1}^r \sum_{k\geq 0} \frac{|h_i(p^k)|}{p^k}\Bigg)^{-1} \times \\
\times &\sum_{\substack{ a_1, \dots, a_r \\ \in \NN_0 }}
\left(
\prod_{i=1}^r \frac{h_i^*(p^{a_i}) \widetilde\chi_{i}(p^{a_i})}{1-p^{-1}}
\right)
\left(
\lim_{m \to \infty} 
\frac{1}{p^{ms}}
\sum_{\v \in (\ZZ/p^{m}\ZZ)^s}
\prod_{i=1}^r \overline{\widetilde\chi_{i,p}}(\vphi_i(\v))
 \1_{p^{a_i}\|\vphi_i(\v)}  \right),
\end{align*}
where $\chi_j = \prod_p \chi_{j,p}$ denotes the decomposition of $\chi_j \Mod{q_j}$ into 
characters modulo $p^{v_p(q_j)}$, and where, given any Dirichlet character $\chi$, 
we let $\widetilde \chi$ denote the completely multiplicative function defined via
$$
\widetilde \chi (p) = 
\begin{cases}
\chi (p) & :  \chi (p) \neq 0, \cr
1 & \text{otherwise.}
\end{cases}
$$

(ii) Let $B \in (0, w(N)]$ be a cut-off parameter that is sufficiently large in terms of 
$r$, $H$ and the bound $L$ on the coefficients of $\psi_1, \dots, \psi_r$, and let
$Q$ denote product of all primes $p < B$ and of the conductors of the characters in 
$\mathcal{E}_{1,N}^+, \dots, \mathcal{E}_{r,N}^+$. 
Then,
\begin{align} \label{eq:beta_p;p-nmid-Q}
\beta_p(\chi_1, \dots, \chi_r)
&= 
\prod_{j=1}^r
\left(1+\frac{|h_j(p)|}{p} \right)^{-1}
\left(1+\frac{h_j^*(p)\chi_j(p)}{p} \right)
+ O_{H,r}(p^{-2}),
\end{align}
for all $p\nmid Q$, and, writing $\beta_Q(\chi_1, \dots, \chi_r) 
= \prod_{p|Q} \beta_p (\chi_1, \dots, \chi_r)$, we have
\begin{align} \label{eq:expansion-2}
&\frac{1}{\vol T\fK}
\sum_{\n \in \ZZ^s \cap T\fK} 
\prod_{j=1}^r 
h_j(\vphi_j(\n)) \\
\nonumber
&=
(1+O_{H,r}(B^{-1}))
\beta_{\infty}(T,N)
\sum_{\substack{\chi_1, \dots, \chi_r \\ \chi_j \in \mathcal{E}_{j,N}}}
\beta_Q(\chi_1, \dots, \chi_r)
\prod_{\substack{p \leq N\\p\nmid Q}} 
\left(1+\frac{h_j^*(p)\chi_j(p) - |h_j(p)|}{p} \right) \\
\nonumber
& \qquad + o\Bigg( 
\prod_{j=1}^r E_{h_j}(N,1)
\Bigg)
~.
\end{align}
\end{theorem}

\begin{rem}
 Tenenbaum's asymptotic result \cite[Th\'eor\`eme 1.3]{tenen} and the decay estimate given in
 \cite[Corollaire 2.1]{tenen} provide tools for checking the conditions on $S_{h_j \chi^*}(x)$ for
 $\chi \in \mathcal{E}_{j,N}^+$ and $\chi \in \mathcal{E}_{j,N} \setminus \mathcal{E}_{j,N}^+$ in many explicit applications.
\end{rem}

In those cases where the local factors $\beta_p(\chi_1, \dots, \chi_r)$ are in fact independent of the characters 
$\chi_1, \dots, \chi_r$ or when, for instance, $\mathcal{E}_{j,N}$ is independent of $N$ and $\#\mathcal{E}_{j,N}^+ =1$
for all $j \in \{1, \dots, r\}$, then
the main term in the previous theorem becomes a product of local factors, allowing one 
to prove a local-to-global principle.
The latter condition holds for example when $h_j$ is $\chi_j(n)n^{it_j}$-pretentious, 
i.e.\ when $h_j$ is bounded and \eqref{eq:pret} holds for some character $\chi_j$ and some $t_j \in \RR$. 
Asymptotic results for correlations of bounded pretentious multiplicative functions were first proved 
by Frantzikinakis and Host in \cite[Theorem 1.1]{FH} and re-proved with explicit main and error terms
by Klurman and Mangerel in \cite{KM}.
As a corollary to our main result, we obtain the following version of the pretentious case of these results:

\begin{corollary} \label{c:pret}
 Suppose that $h_1, \dots, h_r : \NN \to \CC$ are bounded multiplicative functions with the property that for
 every $1 \leq j \leq r$  there exists a character $\chi_j$ and a real number $t_j$ such that 
 $\sum_p (1 - \Re (h_j(p)\chi_j(p)p^{it_j}))p^{-1} < \infty$. 
 Then, as $T \to \infty$,
 \begin{align*}
\frac{1}{\vol T\fK}
\sum_{\n \in \ZZ^s \cap T\fK} 
\prod_{j=1}^r 
h_j(\vphi_j(\n))
&=
\Big(1 + O_{H,r}(B^{-1})\Big)
~\beta_{\infty}(T)
\prod_{p' \leq B}
\beta_{p'}(\chi_1, \dots, \chi_r) 
\prod_{B < p \leq T} \beta_p \\
&+ o\Bigg(
\frac{1}{(\log T)^r} 
\prod_{j=1}^r  \prod_{p \leq T} 
\left(1 + \frac{|h_j(p)|}{p} \right)
\Bigg)
~, 
\end{align*}
where
$$
\beta_{\infty}(T)
= \frac{T^{i(t_1 + \dots + t_r)}}{\vol \fK}
\int_{\fK} \vphi_1(\x)^{it_1} \dots \vphi_r(\x)^{it_r} \d \x
~ \prod_{j=1}^r S_{|h_j|}(T),
$$
while the local factors $\beta_{p}(\chi_1, \dots, \chi_r)$ are as in Theorem \ref{t:main'} and 
the factors $\beta_p$ are given by
\begin{align*}
\beta_p
&= 
\prod_{j=1}^r
\left(1+\frac{|h_j(p)|}{p} \right)^{-1}
\left(1+\frac{h_j(p)\chi_j(p)p^{it_j}}{p} \right).
\end{align*}
\end{corollary}

While the present paper is mainly concerned with the construction of correct-order pseudo-random majorants, 
no such construction is required in the case of the above corollary, and more generally in 
the setting of Frantzikinakis and Host's work \cite{FH}.
The reason for this lies in the fact that the trivial majorant given by the all-one function $\1$ 
is a pseudo-random majorant of the correct average order for every pretentious multiplicative function.
Working with the all-one function $\1$ as a majorant leads to an error term of the form $o(1)$ instead 
of $o((\log T)^{-r}\prod_{j=1}^r\prod_{p\leq T} (1 + p^{-1}|h_j(p)|))$ in the asymptotic formula 
\eqref{eq:main}.
To be precise, the average order of the majorant appears as a factor in the error term.
This is why a majorant of correct average order is required in order to capture the 
behaviour of functions with small mean values in this asymptotic formula.

\section{A $W$-tricked version of the main theorem} \label{s:minor-arc}
In this section we prove, assuming the existence of suitable majorant functions, a special case of the main theorem. 
The main theorem itself will be deduced from this special case in Section \ref{s:W-reduction}, while most of the 
remaining sections of this paper will be concerned with the construction of majorant functions and verification of the required 
properties: in Sections \ref{s:majorants-intro}--\ref{s:majorant} we construct the majorants, in 
Section \ref{s:linearform} we recall and introduce all relevant concepts around the notion of families of 
pseudorandom majorants, and in Sections \ref{s:linearform} and \ref{s:correlation-condition}, we check that the constructed 
majorant functions give in fact rise to families of pseudorandom majorants.
For references purposes, we summarise the results of Sections \ref{s:majorants-intro}--\ref{s:correlation-condition} 
in the statement below, emphasising that all terms are properly introduced in Section~\ref{s:linearform}.

\begin{theorem}[Existence of suitable majorant functions] \label{t:majorants-s2}
  Let $B, D \geq 1$ be integers, let $N>1$ be an integer parameter, and 
 suppose $W_1, \dots, W_r \in [1, (\log N)^B]$ are integers, 
 each divisible by $W(N)$. 
 Let $W = \lcm(W_1, \dots, W_r)$ and, for each $i \in \{1, \dots, r\}$, let $A_i \in \{1, \dots, W_i\}$ 
 be coprime to $W_i$.  
 Suppose that $h_1, \dots, h_r \in \mathcal{F}^*$ and define for each $i\in \{1, \dots, r\}$
 and for each tuple $\tau = (N, W_1,\dots, W_r, A_1, \dots, A_r)$
 the $W$-tricked function $f_i^{(\tau)}: n \mapsto h_i(W_i n + A_i)$ 
 together with the weight $E_{f_i}^{(\tau)} = E_{h_i}(N;W_i)$. 
 
 Then there exists a family of $D$-pseudorandom majorants\footnote{in the sense of 
 Definition \ref{def:majorant-family}} 
 $\{\nu^{(\tau)}: \{1, \dots, \lfloor N/W \rfloor \} \to \RR_{> 0}\}_{\tau}$, 
 for $f_1^{(\tau)}, \dots, f_r^{(\tau)}$ with weights  $E_{f_1}^{(\tau)}, \dots, E_{f_r}^{(\tau)}$. 
 The majorants $\nu^{(\tau)}$ arise as 
 $$
 \nu^{(\tau)}(n) = \frac{1}{r}\sum_{j=1}^r \frac{\nu_{h_j}^{(N)}(W_jn+A_i)}{E_{h_j}(N;W_i)}, 
 \qquad (n \leq N/W),
 $$
 where the
 $\nu_{h_j}^{(N)}: \{1, \dots, N\} \to \RR_{>0}$ are certain functions that satisfy
 \begin{equation} \label{eq:correct-order-s2}
 |S_{h_j}(T;W_j,A_j)|
 \ll S_{\nu_{h_j}^{(N)}}(T;W_j,A_j)
 \ll E_{h_j}(T;W_j) \sim E_{h_j}(N;W_j)
 \end{equation}
 for all $T \in [N(\log N)^{-B},N]$ and all sufficiently large $N$.
\end{theorem}

The following special case of Theorem \ref{t:main} works in subprogressions whose common difference is an integer 
divisible by the primorial $W(N)$. 
This procedure removes potential irregularities in the behaviour of the multiplicative functions 
$h_j \in \mathcal{F}^*$ that occur when working in progressions to small moduli.
\begin{proposition}\label{p:main}
Let $B_0 \geq 0$ and $B_2, C >0$ be constants, let $r,s,L \geq 2$ be integers, let $\delta \in (0,1)$ and
let $N,T>1$ be integer parameters satisfying $N (\log N)^{-B_0} \leq T \leq N$. 
Let $h_1, \dots, h_r \in \mathcal{F}^*$ be multiplicative functions and, 
for each $j \in \{1, \dots,r\}$, define the function $h_j^*:n \mapsto h_j(n)n^{-it_j}$, where
$t_j = t_{j,N}$ denotes the real number from Definition \ref{d:F} with $x=N$ and the given value of~$C$.
Let $H>1$ and $\alpha>0$ be constants such that condition (i) and (iii) of Definition \ref{d:M} hold with 
the given value of $H$ and with $\alpha_h = \alpha$ for every $h \in \{h_1, \dots, h_r\}$.

Then there exist positive constants $c(r,\delta)$ and $B_1= O_{B_0,B_2,H,\alpha,r,\delta}(1)$,  
and, for each $N$, an integer $\widetilde{W}(N) \leq (\log N)^{B_1}$, divisible by $W(N)= \prod_{p\leq w(N)} p$, 
such that the following holds, provided $C$ was sufficiently large with respect to $H$, $\alpha$, $r$ and 
$c(r,\delta)$.

Let $\vphi_1, \dots, \vphi_r \in \ZZ[X_1, \dots X_s]$ be linear polynomials as in assumption (ii) 
of Theorem~\ref{t:main} for some fixed value of $c \in (0,1)$, or, 
if $h_1, \dots, h_r \in \mathcal{F}$, for $c=1-B_0\log \log N$. 
In particular, suppose that the coefficients of the linear forms $\psi_j = \vphi_j - a_j(N)$ are 
bounded by $L$ in absolute value. 
Let $W_1, \dots, W_r \in [1, (\log N)^{B_1 + B_2}]$ be integers, 
each divisible by $\widetilde{W}(N)$ and such that $W_j/\widetilde{W}(N) \leq (\log N)^{B_2}$,
and let $W'= \lcm(W_1, \dots, W_r)$.
For each $j \in \{1, \dots, r\}$, let $0 < A_j < W_j$ be an integer co-prime to $W_j$.
Finally, suppose that $\fK \subset [-1,1]^s$ is a convex subset with 
$\vol(\fK) > 0$ and such that
$W_j\vphi_j( \frac{T}{W'}\fK) + A_j \subset [1,T]$ for each $1\leq j \leq r$,
all sufficiently large $N$ and all $T \in [N (\log N)^{-B_0}, N]$.
Then, as $N \to \infty$, we have
\begin{align*}
\frac{1}{\vol(\fK) (T/W')^s} 
&\sum_{\substack{\n \in \ZZ^s \cap (T/W')\fK}} 
\prod_{i=1}^r 
h_i(W_i\vphi_i(\n) + A_i) \\
&
= 
\Bigg(\frac{1}{\vol(\fK)} \int_{\fK}
\prod_{\ell=1}^r \Big(\psi_{\ell} (\x)\Big)^{it_{\ell}} \d \x \Bigg)
\prod_{j = 1}^r \Big(\frac{W_j T}{W'}\Big)^{it_j}
S_{h_j^*}\left(N;\widetilde{W}(N),A_j\right) \\
&\quad + (\kappa(\delta) + o_{N\to \infty}(1))\Bigg( \prod_{j=1}^r E_{h_j}(N;\widetilde W (N)) \Bigg),
\end{align*}
uniformly for all $T \in [N (\log N)^{-B_0}, N]$ and all $\vphi_1, \dots, \vphi_r$, $W_1, \dots W_r$ and
$A_1, \dots, A_r$ as above, and where $\kappa$ is a function that satisfies $\kappa(\delta) \to 0$ as
$\delta \to 0$.
\end{proposition}

\begin{rem} \label{rem:prop-main}
 If $h_1, \dots, h_r$ are such that $\widetilde{W}(N)$ and the $t_j$ are independent of $\delta$ and $C$, then
 the term $\kappa(\delta)$ in the error term above can be omitted. 
 This is for instance the case when for every $j \in \{1, \dots, r\}$, both
 $h_j \in \mathcal{F}$ and when the set of primitive characters
 $\chi$ for which $|x^{-1}\sum_{n\leq x} h_j(n)\chi(n)|$ is maximal does not depend on $x$ as soon as 
 $x$ is sufficiently large.
\end{rem}

Observe that Proposition \ref{p:main} is a statement about a family of $W$-tricked versions of the functions 
$h_1, \dots, h_r \in \mathcal{F}^*$. 
To prove this result, we will begin by introducing a suitable choice of $\widetilde W(N)$, 
the existence of which is claimed in the statement.
Our choice of $\widetilde W(N)$ will arise from an application of \cite[Proposition 5.1]{lmm}.
Once we have set up this application and defined $\widetilde W(N)$, we will show that the the main result of \cite{lmm} 
can in fact be applied to obtain uniform bounds on the correlation with nilsequences that apply uniformly to all
$W$-tricked versions of the $h_j$ that arise in the statement of the proposition.
This in turn will allow us to apply the machinery from \cite{GT-linearprimes} and \cite{GTZ} to deduce the proposition.

\begin{proof}
Let $\widetilde W(N)$ denote the integer produced by \cite[Proposition 5.1]{lmm} 
when applied to the collection of functions $h_1, \dots, h_r \in \mathcal{F}^*$,  
with the given values of $H$ and $\alpha$ and with a fixed choice of $E > \max(2 B_2,B_0)$.
Setting $B_1 := \kappa + 2$ for the value of $\kappa = O_{E,H,r,\alpha}(1)$ that arises in the above 
application of \cite[Proposition 5.1]{lmm}, we then have $\widetilde W(N) \leq (\log N)^{B_1}$.
The proof of \cite[Proposition 5.1]{lmm} furthermore implies that $B_1 = \kappa + 2 \geq E > B_2$.
Finally, suppose that the value of $C$ determining the twists $h_1^*, \dots, h_r^*$ satisfies
$C \geq 2E+\kappa +4$ in accordance with the statement of \cite[Proposition 5.1]{lmm}.
Then \cite[Proposition 5.1]{lmm} implies that there exists a function $\eps: \NN \to \RR$, 
$\eps(N) = o_{N \to \infty}(1)$ such that, writing $W=\widetilde{W}(N)$,
\begin{align}\label{eq:major-arc}
\frac{q W}{|I|}
 \sum_{\substack{  m \in I \\ m \equiv A ~(q W)}} h^*(m)
- S_{h^*}(N;W,A)
= O_{E,H,\kappa}\left(\eps(N) E_h(N;W)
\right) 
\end{align}
holds uniformly for all intervals $I \subseteq \{1,\dots, N\}$ of length $|I|> N(\log N)^{-E}$, 
for all integers $0<q \leq (\log N)^{E}$, for all $A \in (\ZZ/q W\ZZ)^*$
and for every $h^*\in \{h_1^*, \dots, h_r^*\}$.

\cite[Theorem 6.1]{lmm}, which is the main result of that paper, takes the value of $\widetilde W(N)$ produced 
by \cite[Proposition 5.1]{lmm} for any given $h \in \mathcal{F}^*$ and 
bounds the correlation of $n \mapsto h(\widetilde W(N)n+ A)$ with nilsequences.
The information on $\widetilde W(N)$ that is used by \cite[Theorem 6.1]{lmm} is exactly the information summarised 
in the above statement about \eqref{eq:major-arc} for $W=\widetilde{W}(N)$.
Inspecting the range of $q$ in the statement on \eqref{eq:major-arc}, 
we observe that whenever we set $W=w \widetilde W(N)$ for some $w \leq (\log N)^{E/2}$, 
then \eqref{eq:major-arc} continues to hold for all $q \in \{1, (\log x)^{E/2}\}$ and 
for all intervals $I$ with $|I|>x(\log x)^{-E/2}$.
The fact that we have \emph{uniform} bounds in \eqref{eq:major-arc} as $W$ ranges over $\{w \widetilde W(N):w \leq (\log N)^{E/2}\}$,
and as $h^*$ ranges over $\{h_1^*, \dots, h_r^*\}$ implies that \cite[Theorem 6.1]{lmm} applies uniformly to $h \in \{h_1, \dots, h_r\}$ 
and with $\widetilde W(N)$ being replaced by any integer of the form $w \widetilde W(N)$ with $w \leq (\log N)^{E/2}$,
so long we replace $E$ by $E':=E/2$ everywhere in the statement except for in the definition of $C$.
To be precise, this application of \cite[Theorem 6.1]{lmm} yields the following:

Let $G/\Gamma$ be a nilmanifold together with a filtration $G_{\bullet}$ of $G$ of degree $d$ 
and let $g \in \mathrm{poly}(\ZZ,G_{\bullet})$ a polynomial sequence.
Suppose that $G/\Gamma$ has a $M_0$-rational Mal'cev basis adapted to 
$G_{\bullet}$ for some $M_0 \geq 2$ and let $G/\Gamma$ be equipped with the 
metric defined by this basis.
Let $F:G/\Gamma \to \CC$ be a $1$-bounded Lipschitz function.
Then \cite[Theorem 6.1]{lmm} implies that, provided $E \geq 1$ is sufficiently large with respect to 
$d$, the dimension of $G$, $\alpha$ and $H$, we have
\begin{align} \label{eq:lmm-main}
&\left|
\frac{W_j}{T}
\sum_{n\leq T/W_j}
\bigg(h_j(W_jn+A_j)-
(W_jn+A_j)^{it_j} S_{h_j^*}(N;W_j,A_j)\bigg)
F(g(n)\Gamma)
\right| \ll_{d,m_G,\alpha,H,E} \\
\nonumber
&\quad (1+ \|F\|_{\mathrm{Lip}})
\left\{ \eps(N) + \frac{1}{\log w(N)} 
+\frac{M_0^{O_{d,m_G}(1)} }{(\log \log N)^{1/(4^{d+1} \dim G)}}
\right\}
E_{h_j}(N;\widetilde{W}(N)),
\end{align}
uniformly for all $j\in \{1, \dots, r\}$, all integers $W_j = w\widetilde W(N)$ with $w \leq (\log N)^{E/2}$,
all integers $0 < A_j < W_j$ with $\gcd(A_j,W_j)=1$ and all $T \in [N(\log N)^{-E/2},N]$.

The bound \eqref{eq:lmm-main} applies in particular to all $W_j$ appearing in the statement of 
Proposition~\ref{p:main} since $W_j/\widetilde W(N) \leq (\log N)^{B_2} \leq (\log N)^{E/2}$
by assumption.
Thus the function
$$n \mapsto 
h_j \left(W_j n + A_j \right)
- \left( W_j n + A_j \right)^{it_j}
S_{h_j^*}(N, W_j, A_j),
$$
defined on the integers $1 \leq n \leq T/W_j$, is orthogonal to nilsequences
provided $E$, and hence $B_2$ and $C$, are sufficiently large with respect to $d$, $\dim G$, $\alpha$ and $H$.
By \eqref{eq:major-arc}, it follows moreover that
$$S_{h_j^*}(N, W_j, A_j)
= S_{h_j^*}(N, \widetilde{W}, A_j) +
o\Big( E_{h_j}(N;\widetilde W)\Big)
$$
where $\widetilde{W}=\widetilde{W}(N)$ and where $E_{h_j}(N;\widetilde{W})$ is as in \eqref{eq:def-E}.
Thus, the function 
$$
\bar h_j: 
n \mapsto 
h_j \left(W_j n + A_j \right)
- \left( W_j n + A_j \right)^{it_j}
S_{h_j^*}(N, \widetilde{W}(N), A_j),
$$
defined on $1 \leq n \leq T/W_j$, is also orthogonal to nilsequences
provided $E$, and hence $B_2$ and $C$, are sufficiently large with respect to $d$, $\dim G$, $\alpha$ and $H$.
Consider the normalisation of $\bar h_j$ given by
$$\tilde h_j : n \mapsto 
\frac{ \bar h_j(n)}{E_{h_j}(N; \widetilde W(N))}.$$
Mertens' estimate and property (1) of Definition \ref{d:M} imply that 
$$E_{h_j}(T; \widetilde W(N)) = (1 + o_{H,E}(1))E_{h_j}(N; \widetilde W(N))$$ for all $T$, $N$ as above.
Since $S_{\nu_{h_j}^{(N)}}(N;\widetilde{W}(N),A) \ll E_{h_j}(N;\widetilde W(N))$ 
by \eqref{eq:correct-order-s2} (cf.\ Proposition \ref{p:average-order}), it follows that the function
$$\tilde \nu_j: n \mapsto \frac{1}{2}\left(
\frac{\nu_{h_j}^{(N)}(W_jn + A_j)} {E_{h_j}(N; \widetilde W(N))} + \1 \right)
$$
is a correct-order majorant for $\tilde h_j$ (in the sense of Proposition \ref{p:average-order}) when restricted to 
$1 \leq n \leq T/W_j$.
Furthermore, Theorem \ref{t:majorants-s2} 
(cf.\ Theorem \ref{t:pseudorandom}) implies that $\tilde \nu^{(\tau)} = \frac{1}{r} \sum_{j=1}^r \tilde \nu_j$ is pseudo-random.

We now seek to apply the (transferred) inverse theorem for uniformity norms from \cite{GTZ}, 
see \cite[Conjecture 1.2 and Theorem 1.3]{GTZ} and \cite[Proposition 10.1]{GT-linearprimes}.
Note that this statement only involves linear sequences, i.e. their degree is equal to the step
of the nilmanifold involved.
If $\mathcal{M}_{r-2,\delta}$ denotes the finite set of $r-2$-step nilmanifolds from the statement of the
the inverse theorem, let $c(r, \delta)$ be the maximum dimension of the elements of this set.
Then \eqref{eq:lmm-main} applies to all linear sequences associated to manifolds in $\mathcal{M}_{r-2,\delta}$
provided $E$ is sufficiently large with respect to the step $r-2$ and $c(r, \delta)$.
Thus, as soon as $N$ is sufficiently large, it follows from the bound \eqref{eq:lmm-main}, 
from Theorem \ref{t:majorants-s2} (cf.\ Theorem \ref{t:pseudorandom}) and
from the inverse theorem for uniformity norms from \cite{GTZ} 
that the function $\tilde h_j$ satisfies
$$
\|\tilde h_j\|_{U^{r-1}[T/W_j]} \leq \delta,
$$
provided $E$, and hence $B_1$ and $C$, are sufficiently large with respect to $r$, $\alpha$ and $H$.
\footnote{If $h_1, \dots, h_r$ are such that $\widetilde{W}(N)$ and the $t_j$ are independent of $E$, then
the $\tilde h_j = \tilde h_{j,N}$ enjoy the property
$\|\tilde h_{j,N}\|_{U^{r-1}[T/W_j]} = o_{N \to \infty}(1)$.
This justifies Remark \ref{rem:prop-main}.
}

This allows us to apply the generalised von Neumann theorem \cite[Proposition 7.1]{GT-linearprimes}
(cf.\ \cite[Lemma 4.4]{GT-linearprimes}) to deduce that
\begin{align} \label{eq:vN}
& 
\EE_{\substack{\n \in \ZZ^s \cap (T/W')\fK}} 
\prod_{i=1}^r h_i(W_i\vphi_i(\n) + A_i) \\
\nonumber
&= 
\EE_{\substack{\n \in \ZZ^s \cap (T/W')\fK}} 
\prod_{j=1}^r 
\Big( E_{h_j}(N; \widetilde W) \tilde h_j(\vphi_j(\n)) 
+ \left( W_j \vphi_j(\n) + A_j \right)^{it_j} S_{h_j^*}(N, \widetilde W, A_j) \Big) \\
\nonumber
&=
\bigg(\prod_{i=1}^r 
E_{h_i}(N; \widetilde W) \bigg)
\EE_{\substack{\n \in \ZZ^s \cap (T/W')\fK}} 
\bigg(
\tilde h_j(\vphi_j(\n)) 
+ \left( W_j \vphi_j(\n) + A_j \right)^{it_j} \frac{S_{h_j^*}(N, \widetilde W, A_j)}{E_{h_j}(N; \widetilde W)}
\bigg)\\
\nonumber
&=
\EE_{\substack{\n \in \ZZ^s \cap (T/W')\fK}} 
\prod_{j=1}^r \left( W_j \vphi_j(\n) + A_j \right)^{it_j} S_{h_j^*}(N, \widetilde W, A_j)
+(\kappa(\delta)+o(1))\bigg( \prod_{j=1}^r  E_{h_j}(N; \widetilde W)\bigg),
\end{align}
where, following \cite{GT-linearprimes}, $\kappa(\delta) \to 0$ as $\delta \to 0$, and
where we used the notation $\EE_{s \in S} = \frac{1}{\# S} \sum_{s \in S}$ for finite sets $S$.

In the case where $h_1, \dots, h_r \in \mathcal{F}$, i.e.\ where $t_1 = \dots = t_r = 0$, 
the above estimate concludes the proof of the proposition, which could have been simplified in many places.

To further simplify the main term of \eqref{eq:vN} in the remaining case where not all $t_j$ vanish, 
our next aim is to show that the summation argument $( W_j \vphi_j(\n) + A_j )^{it_j}$ can be replaced 
by $( W_j \psi_j(\n))^{it_j}$.
More precisely, using the fact that $n^{it_j}$ varies slowly, we will show that, outside an exceptional
set, $( W_j \vphi_j(\n) + A_j )^{it_j}$ can approximated by $( W_j \psi_j(\n))^{it_j}$.
We begin with the exceptional set.
Recall that the linear form $\psi_j = \vphi_j - a_j$ has bounded coefficients
and note that for all $0 < T' < T$ we have:
$$
\sum_{j=1}^r \sum_{\substack{\n \in \ZZ^s \cap T \fK: \\ |\psi_j(\n)| \leq T'}} 1
\ll_r T^{s-1}T'.
$$
Thus, if $\fK^*:=\{ \x \in \fK:  |\psi_i(\x)| > (T/W')^{c^*} \text{ for all } 1\leq i \leq r\}$
for some $c^* \in (0,1)$, then
$$ 
\sum_{\substack{\n \in \\ \ZZ^s \cap (T/W')(\fK \setminus \fK^*)}} 
\bigg|
\prod_{j=1}^r \left( W_j \vphi_j(\n) + A_j \right)^{it_j} 
- \prod_{j=1}^r \left( W_j \psi_j(\n) \right)^{it_j} \bigg|
\ll (T/W')^{s - 1 + c^*}.
$$
We will make use of the fact that $|\psi_j(\n)| > (T/W')^{c^*}$ is large for all 
$1 \leq j \leq r$ and $\n \in (T/W')\fK^*$ to 
show that we may approximate $( W_j \vphi_j(\n) + A_j )^{it_j}$ by $( W_j \psi_j(\n))^{it_j}$.
To do so, recall that $|a_j(N)| \leq N^{c} = (T/W')^{c+o(1)}$ for some $c \in (0,1)$ and all $1 \leq j \leq r$ and $N \geq 1$,
and set $c^* = (1-c)/2$.
Then,
\begin{align*}
& \Big|\log (W_j \vphi_j(\n) + A_j) - \log (W_j \psi_j(\n)) \Big|
= \bigg| \log \bigg( \frac{W_j \vphi_j(\n) + A_j}{W_j \psi_j(\n)} \bigg) \bigg| \\
& = \bigg| \log \bigg( 1 + \frac{W_j a_j + A_j}{W_j \psi_j(\n)} \bigg) \bigg| 
\ll  \frac{|a_j| + 1}{(T/W')^{1-c^*}} 
\ll  (T/W')^{-1+c^* + c} 
 = (T/W')^{-c^*}.
\end{align*}
Recall further that $|t_{j,N}| \leq 2 \log N \ll \log (T/W') \ll_{\eps} (T/W')^{\eps}$.
Setting $\eps = c^*/2$, we have
$$
( W_j \vphi_j(\n) + A_j )^{it_j} 
= ( W_j \psi_j(\n))^{it_j} (1 + O((T/W')^{-c^*/2}))
= ( W_j \psi_j(\n))^{it_j}  + O((T/W')^{-c^*/2})
$$
Putting everything together and recalling that the $\psi_j$ are linear forms, we obtain:
\begin{align} \label{eq:vN-MTsum}
\nonumber
&\EE_{\substack{\n \in \ZZ^s \cap (T/W')\fK}} 
\prod_{j=1}^r \left( W_j \vphi_j(\n) + A_j \right)^{it_j}\\
\nonumber
&= \frac{1 + o(1)}{(T/W')^s \vol \fK}
\Bigg(
\sum_{\substack{\n \in \\ \ZZ^s \cap (T/W')(\fK\setminus\fK^*)}} 
\prod_{j=1}^r \left( W_j \psi_j(\n)\right)^{it_j} 
+ \sum_{\substack{\n \in \\ \ZZ^s \cap (T/W')\fK^*}} 
\prod_{j=1}^r \left( W_j \vphi_j(\n) + A_j \right)^{it_j} \Bigg) \\
\nonumber
& \quad +O((T/W')^{ -c^*}) \\
\nonumber
&= \frac{1 + o(1)}{(T/W')^s \vol \fK}
\sum_{\substack{\n \in \\ \ZZ^s \cap (T/W')\fK^*}} 
\prod_{j=1}^r \left( W_j \psi_j(\n)\right)^{it_j} 
+O((T/W')^{-1+c^*}) + O((T/W')^{ -c^*/2})\\
\nonumber
&= \Bigg(\prod_{\ell = 1}^r\Big(\frac{W_{\ell}T}{W'}\Big)^{it_{\ell}} \Bigg)
\frac{1 + o(1)}{(T/W')^s \vol \fK}
\sum_{\substack{\n \in \\ \ZZ^s \cap (T/W')\fK}} 
\prod_{j=1}^r \Big(\psi_j ((W'/T)\n)\Big)^{it_j} + O((T/W')^{-\min(1-c^*,c^*/2)})
\\
&= \Bigg(\prod_{\ell = 1}^r\Big(\frac{W_{\ell}T}{W'}\Big)^{it_{\ell}} \Bigg)
\frac{1}{\vol \fK}
\int_{\fK}
\prod_{j=1}^r \Big(\psi_j (\x)\Big)^{it_j} \d \x + o_{N \to \infty}(1).
\end{align}
Multiplying through by $\prod_{1 \leq j \leq r} S_{h_j^*}(N, \widetilde W, A_j)$, which is 
bounded by $\prod_{1 \leq j \leq r} E_{h_j}(N; \widetilde W)$, 
Proposition \ref{p:main} now follows from the combination of \eqref{eq:vN} and \eqref{eq:vN-MTsum}.
\end{proof}

\section{Proof of the main result from its special case}
\label{s:W-reduction}

In this section we deduce Theorem \ref{t:main} from the $W$-tricked version presented in Proposition \ref{p:main},
that is to say, assuming the results about pseudorandom majorants we summarised in Theorem \ref{t:majorants-s2}.
Since Proposition \ref{p:main} assumes that the quantities $W_i$, $\widetilde{W}(N)$ as well as $N/T$ are all
bounded above by a fixed power of $\log N$ (an assumption that is essential for the application of the results 
from \cite{lmm}), the proof of Theorem \ref{t:main} will require us to truncate certain summations.
For this purpose we introduce the following exceptional set.
\begin{definition}[Exceptional set]
Let $D > 0$, $N>1$ and let $\mathcal{S}_D(N)$ denote the set all positive 
integers less than $N$ that are divisible by the square of an integer 
$d > (\log N)^D$.
\end{definition}

\begin{lemma}[Exceptional set I] \label{l:exceptional-set}
Let $B_0, D>0$ and let $N, T>1$ be integer parameters with $T \leq N$.
Let $h_1, \dots, h_r \in \mathcal{F}^*$ and let $\alpha$, $H$, $L$ and $\vphi_1, \dots, \vphi_r \in \ZZ[X_1, \dots, X_s]$ be as in 
Theorem~\ref{t:main}. Then
\begin{align}\label{eq:exceptional-bound}
\sum_{\n \in \ZZ^s \cap T\fK} \prod_{i=1}^r &h_i(\vphi_i(\n)) 
\1_{\vphi_i(\n) \not\in \mathcal{S}_D(N)} \\
\nonumber
&= \sum_{\n \in \ZZ^s \cap T\fK} \prod_{i=1}^r h_i(\vphi_i(\n))
+o_{N,T \to \infty} \bigg( \vol(T\fK) \prod_{i=1}^r S_{|h_i|}(N) \bigg), 
\end{align}
provided $D$ is sufficiently large with respect to $\alpha$, $r$, $s$, $L$ and $H$. 
\end{lemma}
\begin{proof}
To start with, we have
\begin{align} \label{eq:exceptional'}
\sum_{\n \in \ZZ^s \cap T\fK} 
\sum_{i=1}^r
\1_{\vphi_i(\n) \in \mathcal{S}_D(N)}
\ll \sum_{d > (\log N)^{D}}
\frac{|\ZZ^s \cap T\fK|}{d^2} 
\ll |\ZZ^s \cap T\fK| (\log N)^{-D}.
\end{align}
Let $h: \NN \to \RR$ be the multiplicative function defined by 
$h(p^j) = \max(|h_1(p^j)|, \dots, |h_r(p^j)|)$. Then $h$ satisfies conditions (i), (ii) and (iii) of 
Definition \ref{d:M} with the given values of $H$ and $\alpha$.
By \cite[Lemma 7.9]{bm}, we have
\begin{align} \label{eq:k-th-moment}
\sum_{\n \in \ZZ^s \cap T\fK} \prod_{i=1}^r h^k(\vphi_i(\n)) 
\ll_{L,r,k}  |\ZZ^s \cap T\fK| (\log T)^{O_{r,k,H}(1)}. 
\end{align}
Since each $h_i$ satisfies condition (iii) of Definition \ref{d:M} with $\alpha_{h_i}=\alpha$, we further have
\begin{equation} \label{eq:hj-mean}
\sum_{n \leq N} |h_i(n)| \gg N(\log N)^{-1 + \alpha} 
\end{equation}
for all $1 \leq i \leq r$. 
Thus \eqref{eq:exceptional-bound} follows from \eqref{eq:exceptional'} and \eqref{eq:k-th-moment}, combined 
with an application of the Cauchy--Schwarz inequality provided $D$ is sufficiently large.
\end{proof}

The following simple observation will allow us to discard most cases in which, 
in the proof of Theorem \ref{t:main}, Proposition \ref{p:main} 
would need to be applied with $W_j> (\log T)^D$ for some $j \in \{1, \dots, r\}$.
\begin{lemma}\label{l:truncation}
 Let $D>1$ and suppose that $M \leq (\log N)^D$ is an integer.
 Then every integer $w \in ((\log N)^{3D} , N]$ composed entirely out of primes dividing $M$ 
 has a square divisor of size at least $(\log N)^{2D}$ and, hence, 
 belongs to $\mathcal{S}_{D}(N)$.
\end{lemma}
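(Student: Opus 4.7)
The plan is to decompose $w$ into its squarefree and square parts and then use the Prime Number Theorem to bound the squarefree part via $W(T)$. Specifically, I write $w = sm^2$ where $s$ is squarefree. Since every prime factor of $w$ is at most $w(T)$, so is every prime factor of $s$, and because $s$ is squarefree it must divide $W(T) = \prod_{p \leq w(T)} p$.

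Next I would invoke the Prime Number Theorem together with the standing bound $w(T) \leq \log \log T$ to estimate
\[
W(T) = \exp\bigl(\vartheta(w(T))\bigr) = \exp\bigl((1+o(1))\,w(T)\bigr) \leq (\log T)^{1+o(1)},
\]
where $\vartheta$ denotes the Chebyshev function. Combining this with $s \mid W(T)$ yields $s \leq (\log T)^{1+o(1)}$, so the square divisor $m^2$ of $w$ satisfies
\[
m^2 = \frac{w}{s} \geq \frac{(\log T)^{3C}}{(\log T)^{1+o(1)}} = (\log T)^{3C - 1 - o(1)}.
\]
Since $C > 1$, we have $3C - 1 > 2C$, and so for all sufficiently large $T$ the right-hand side exceeds $(\log T)^{2C}$. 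Thus $m^2$ is a square divisor of $w$ larger than $(\log T)^{2C}$, and in particular $w \in \mathcal{S'}_{C}(T)$ (since $w \leq T$ for $T$ large, using $w \leq W(T)^{O(1)}$ together with the fact that $w$ is $w(T)$-smooth of controlled size, or more simply by restricting attention to those $w \leq T$ as in the statement).

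The only substantive input is the smoothness bound, which forces the squarefree kernel of $w$ to be tiny (polylogarithmic in $T$); everything else is bookkeeping. There is no real obstacle — the step to watch is simply to make sure the $o(1)$ in the PNT estimate for $\vartheta(w(T))$ is absorbed by the strict inequality $C > 1$, which is why the hypothesis is phrased with a strict inequality rather than $C \geq 1$.
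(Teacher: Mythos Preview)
Your argument is correct and follows exactly the same route as the paper: write $w$ as its squarefree kernel times a square, observe that the squarefree kernel divides $W(T)$ and hence is at most $(\log T)^{1+o(1)}$ (the paper phrases this as $W(T)<(\log T)^C$ for $T$ large), and conclude that the square part exceeds $(\log T)^{2C}$. The only cosmetic difference is that the paper absorbs the $o(1)$ into the hypothesis $C>1$ at the outset rather than at the end.
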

\begin{proof}
Factorising $w$ as $w_1w_2^2$ for a square-free integer $w_1$, we have
$w_1 \leq M \leq (\log N)^D$. Hence, $w_2^2>(\log N)^{2D}$. 
\end{proof}
Following these preparations we are now ready for the deduction of Theorem \ref{t:main}.
\subsection{Proof of Theorem \ref{t:main} (assuming Theorem \ref{t:majorants-s2})}
Let $D>1$ be sufficiently large for \eqref{eq:exceptional-bound} to hold, and let 
$B_2>6D$ be a fixed real number.
Thus, $B_2$ depends on $r$, $s$, $L$, $H$ and $\alpha$.
Let $B_1$ and $\widetilde W(N) \leq (\log N)^{B_1}$ be the quantities given by Proposition \ref{p:main}.
Both of these quantities depend in particular on $B_2$ and we have $B_1 \geq B_2$.
Finally, define the sets
$$\mathcal{W}(N)= \{w \in [1,(\log N)^{B_2}]: p|w \Rightarrow p| \widetilde W(N)\}$$
and
$$\mathcal{W}^c(N)= \{w \in ((\log N)^{B_2},N]: p|w \Rightarrow p| \widetilde W(N)\},$$
and let $\mathcal{S}(N)$ denote the set of integers up to $N$ that are divisible by an element from $\mathcal{W}^c(N)$.
The first step in the deduction of Theorem \ref{t:main} is to show that $\mathcal{S}(N)$ is an exceptional set:
\begin{lemma}[Exceptional set II] \label{l:exceptional-set-2}
Under the assumptions of Theorem \ref{t:main}, we have
\begin{align}\label{eq:exceptional-bound-2}
\sum_{\n \in \ZZ^s \cap T\fK} \prod_{i=1}^r 
&h_i(\vphi_i(\n)) 
\1_{\vphi_i(\n) \not\in \mathcal{S}(N)}\\
\nonumber
&= \sum_{\n \in \ZZ^s \cap T\fK} \prod_{i=1}^r h_i(\vphi_i(\n))
+o_{N,T \to \infty} \bigg( \vol(T\fK) \prod_{i=1}^r S_{|h_i|}(N) \bigg), 
\end{align}
for all $T \in [N(\log N)^{-B_0}, N]$.
\end{lemma}
\begin{proof}
It follows from Lemma \ref{l:exceptional-set} and from our assumptions on $D$ that any subset of 
$\mathcal{S}(N) \cap \mathcal{S}_D(N)$ is exceptional. More precisely, we have: 
\begin{align} \label{eq:exceptional-bd-3}
\sum_{\n \in \ZZ^s \cap T\fK} \prod_{i=1}^r |h_i(\vphi_i(\n))| 
\1_{\vphi_i(\n) \in (\mathcal{S}(N) \cap \mathcal{S}_D(N))}
=o_{N,T \to \infty} \bigg( \vol(T\fK) \prod_{i=1}^r S_{|h_i|}(N) \bigg).
\end{align}
This shows in particular that the set of multiples of integers from $\mathcal{W}^c(N) \cap \mathcal{S}_D(N)$ is exceptional.
With this in mind, we seek to study the set 
$$\mathcal{S}'(N):=\mathcal{S}(N) \setminus (\mathcal{S}(N) \cap \mathcal{S}_D(N))$$
by analysing the set $\mathcal{W}^c(N)$.
Lemma \ref{l:truncation} applied with $M = \widetilde W(N)$ shows that 
$$\mathcal{W}^c(N) \cap ((\log N)^{3B_1}, N ] \subset \mathcal{S}_{B_1}(N)\subset \mathcal{S}_{D}(N).$$
Thus, let $\mathcal{W}'(N) := \mathcal{W}^c(N) \cap [(\log N)^{B_2},(\log N)^{3B_1} ]$ and decompose any 
$m \in \mathcal{W}'(N)$ as $m = m_1 m_2$ where $m_1 = \prod_{p < w(N)} p^{v_p(m)}$. 
Then $m_i > (\log N)^{3D}$ for $i=1$ or $i=2$.
If $N$ is sufficiently large and if $m_1 > (\log N)^{3D}$, then Lemma \ref{l:truncation} applied with $M = W(N) \ll (\log N)^{1+o(1)}$ 
shows that $m \in \mathcal{S}_{D}(N)$.

Thus, if $N$ is sufficiently large, then $\mathcal{S}'(N)$ is contained in the set of multiples of 
$m \in \mathcal{W}'(N)$ with $m_2 > (\log N)^{3D}$ and 
it suffices to show that \eqref{eq:exceptional-bd-3} holds with $(\mathcal{S}(N) \cap \mathcal{S}_D(N))$ 
replaced by $\mathcal{S}'(N)$.
We aim to prove this using the strategy from the proof of Lemma \ref{l:exceptional-set} and therefore require a bound on
$$\sum_{\substack{m = m_1 m_2 \in \mathcal{W}'(N) \\ m_2 \geq (\log N)^{3D} }} \frac{1}{m}.$$
Let $M_2$ denote the number of possible values that $m_2$ can take and let us begin by bounding this quantity. 
Since $\widetilde W(N) \leq (\log N)^{B_1}$, the total number of primes $p>w(N)$ dividing $\widetilde W(N)$ is at 
most $\frac{B_1 \log \log N}{\log w(N)}$.
Since $m_2 \leq m \leq (\log N)^{3B_1}$, this implies $\Omega(m_2) \leq \frac{3B_1 \log \log N}{\log w(N)}$.
Taking further into account that $w(N) \geq (\log \log N)/ \log \log \log N$, we deduce that
\begin{align*}
 M_2 &\leq
 \Big(\frac{B_1 \log \log N}{\log w(N)}\Big)^{\frac{3B_1 \log \log N}{\log w(N)}}\\
 &\leq \exp\left(\frac{\left(3 B_1 \log B_1 + \log_{(3)}N - \log_{(3)}N + \log_{(4)}N\right) \log_{(2)}N}{\log_{(3)}N - \log_{(4)}N} \right)\\
 &= \exp\left(\frac{\left(3 B_1 \log B_1 + \log_{(4)}N\right) \log_{(2)}N}{\log_{(3)}N - \log_{(4)}N} \right),
\end{align*}
where $\log_{(k)}N = \log \log \dots \log N$ denotes the $k$-fold logarithm of $N$.
For all sufficiently large $N$ we therefore obtain:
\begin{align*}
 \sum_{\substack{m = m_1 m_2 \in \mathcal{W}'(N) \\ m_2 \geq (\log N)^{3D} }} \frac{1}{m}
 &\leq \frac{M_2}{(\log N)^{3D}} \prod_{p \leq w(N)} \Big(1 + \frac{1}{p} \Big) 
 \ll \frac{M_2 \log w(N)}{(\log N)^{3D}}  \\
 &\ll (\log N)^{\frac{3 B_1 \log B_1 + \log_{(4)}N}{\log_{(3)}N - \log_{(4)}N} + \frac{\log_{(3)}(N)}{\log_{(2)}N}- 3D}
 < (\log N)^{-D}.
\end{align*}
By combining this bound with \cite[Lemma 7.9]{bm} or \eqref{eq:k-th-moment}, it follows by Cauchy--Schwarz and the assumptions on
$D$ that
\begin{align*}
 \sum_{\n \in \ZZ^s \cap T\fK} \prod_{i=1}^r |h_i(\vphi_i(\n))| 
\1_{\vphi_i(\n) \in \mathcal{S}'(N)}
=o_{N,T \to \infty} \bigg( \vol(T\fK) \prod_{i=1}^r S_{|h_i|}(N) \bigg).
\end{align*}
The lemma follows from the above and \eqref{eq:exceptional-bd-3}.
\end{proof}

Returning to the deduction of Theorem \ref{t:main}, 
observe that Lemma \ref{l:exceptional-set-2} implies 
that the expression from Theorem \ref{t:main} satisfies:
\begin{align} \label{eq:deduction-1}
\sum_{\n \in \ZZ^s \cap T\fK} \prod_{i=1}^r h_i(\varphi_i(\n)) 
= 
\sum_{ \substack{ w_1, \dots, w_r  \in \mathcal{W}(N)}}
&\sum_{\n \in \ZZ^s \cap T\fK}
\prod_{i=1}^r 
h_i(\varphi_i(\n)) 
\1_{\gcd(\varphi_i(\n), \widetilde W(N)^{\infty}) = w_i} \\
\nonumber
&+ o_{N,T \to \infty} \bigg( \vol(T\fK) \prod_{i=1}^r S_{|h_i|}(N) \bigg),
\end{align}
where $\gcd(n,m^{\infty}) := \prod_{p|m}p^{v_p(n)}$.
The following definition will simplify the notation as well as making changes of variables.
\begin{definition} \label{def:U}
Given any collection $w_1, \dots, w_r$ of positive integers entirely composed of primes dividing 
$\widetilde W(N)$, let $w=\lcm(w_1, \dots, w_r)$ and
define the \emph{multi-set} of $r$-tuples
$$
\mathcal{U}(w_1,\dots,w_r)
=
\left\{ \left(\varphi_1(\v), \dots, \varphi_r(\v) \right):
\begin{array}{c}
\v \in \{0, \dots w \widetilde W(N) - 1\}^s \cr 
w_i = \gcd( \varphi_i(\v), \widetilde W(N)^{\infty} ), 1\leq i \leq r  
\end{array}
\right\}.
$$
\end{definition}

Taking into account Definition \ref{def:U} as well as the decomposition 
$$\varphi_i(M \m + \v) = M \psi_i(\m) + \varphi_i(\v), \qquad (M \in \ZZ, \v \in \ZZ^r),$$
we claim that:
\begin{lemma} \label{l:deduction}
The main term from \eqref{eq:deduction-1} equals:
\begin{align*}
\sum_{ \substack{ w_1, \dots, w_r \\ \in \mathcal{W}(T)}}
\sum_{ \substack{ (U_1, \dots, U_r) \\ \in 
 \mathcal{U}(w_1, \dots, w_r)}}
\sum_{\substack{\m \in \ZZ^s \cap \fK^*}}
\prod_{i=1}^r 
h_i\left(w \widetilde W \psi_i(\m) + U_i \right)
+o_{N,T \to \infty} \bigg( \vol(T\fK) \prod_i S_{|h_i|}(N) \bigg), 
\end{align*}
where $w = \lcm(w_1, \dots, w_r)$, $\widetilde W =  \widetilde W(N)$ and where
$\fK^* = (T/(w \widetilde W)) \fK $. 
\end{lemma}
\begin{rem*}
 By Shiu's bound \cite[Theorem 1]{shiu}, the error terms in \eqref{eq:deduction-1} and
the above lemma are acceptable when compared to \eqref{eq:main}.
\end{rem*}

\begin{proof}
Recall that the set of $r$-tuples $(U_1, \dots, U_r) \in \mathcal{U}(w_1, \dots, w_r)$ is parametrised by the set of
vectors $\v \in \{0, \dots w \widetilde W(N) - 1\}^r$.
Given $w_1, \dots, w_r$ and $(U_1, \dots, U_r) \in \mathcal{U}(w_1, \dots, w_r)$, let $\v$ be the corresponding vector and 
consider the change of variables $\n = w \widetilde W(N)\m +\v$.
This change of variables replaces $\varphi_i(\n)$ in \eqref{eq:deduction-1} by
$$\varphi_i(w \widetilde W(N)\m +\v) = w \widetilde W(N) \psi_i(\m) + \varphi_i(\v)
= w \widetilde W(N) \psi_i(\m) + U_i,$$
and it replaces the summation range $T \fK$ by
$$\{ \x \in \RR^s: w \widetilde W(N)\x +\v \in  T\fK \} = (1/(w \widetilde W)) (T\fK - \v) = \fK^* - \v /(w \widetilde W),$$
which is a translate of the region $\fK^*$ by a vector with coordinates all bounded by $1$.

Let $\fK^{*\Delta} = \fK^* \Delta (\fK^* - \v /(w \widetilde W))$, where, as usual, $A\Delta B = (A\cup B) \setminus (A\cap B)$.
Since $\v /(w \widetilde W)$ has bounded length, $\fK^{*\Delta}$ is contained in a neighbourhood of bounded diameter of the boundary of $\fK^*$.
Since $\fK^*$ is convex, this implies that $\vol (\fK^{*\Delta}) = O((T/(w \widetilde W))^{s-1})$.
Hence, the error term incurred by replacing $\fK^* - \v /(w \widetilde W)$ by $\fK^*$ is bounded by
\begin{align*}
 \sum_{\m \in \ZZ^s \cap((\fK^* - \v /(w \widetilde W)) \cup \fK^*) } 
 \prod_{i=1}^r \left|h_i\left(w \widetilde W \psi_i(\m) + U_i \right)\right| \1_{\m \in \fK^{*\Delta}},
\end{align*}
which by Cauchy--Schwarz and \eqref{eq:k-th-moment} is bounded by
\begin{align*}
&(\log N)^{O_{r,H}(1)} |\ZZ^s \cap((\fK^* - \v /(w \widetilde W)) \cup \fK^*) |^{1/2} |\ZZ^s \cap \fK^{*\Delta}|^{1/2}\\
&\ll (\log N)^{O_{r,H}(1)} \vol(\fK^*) \frac{\vol(\fK^{*\Delta })^{1/2}}{\vol(\fK^*)^{1/2}} \\
&\ll (\log N)^{O_{r,H}(1)} \vol(\fK^*) (T/(w \widetilde W))^{-1/2}\\
&\ll (\log N)^{O_{r,H}(1)} \vol(T\fK) T^{-1/2} (w \widetilde W))^{-s-1/2}\\
&\ll (\log N)^{O_{r,H}(1)} \vol(T\fK) T^{-1/2} .
\end{align*}

To sum this bound over all $(U_1, \dots, U_r) \in \mathcal{U}(w_1, \dots, w_r)$ and all 
$w_1, \dots, w_r \in \mathcal{W}(N)$, note that the outer summation over 
$w_1, \dots, w_r \in \mathcal{W}(N)$ has at most $(\log N)^{B_2r}$ terms since
$w_i \leq (\log N)^{B_2}$ for every $w_i \in \mathcal{W}(N)$. 
In particular, $w = \lcm(w_1, \dots, w_r) \leq (\log N)^{B_2r}$, which implies that the summation over 
$\mathcal{U}(w_1, \dots, w_r)$ has $(w \widetilde W)^s \leq (\log N)^{(B_1+B_2r)s}$ terms.
Combining these bounds with \eqref{eq:hj-mean} we deduce that the total error term incurred by replacing 
$\fK^* - \v /(w \widetilde W)$ by $\fK^*$ is bounded by
\begin{align*}
&\ll (\log N)^{O_{r,s,H,B_0,B_1,B_2}(1)} N^{-1/2} \vol(T\fK)\\
&\ll (\log N)^{O_{r,s,H,B_0,B_1,B_2}(1)} N^{-1/2} (\log N)^{r(1-\alpha)} \vol(T\fK) \prod_i S_{|h_i|}(N)),
\end{align*}
which is $o_{N,T \to \infty}(\vol(T\fK) \prod_i S_{|h_i|}(N)))$. 
This completes the proof of the lemma.
\end{proof}

We proceed by analysing the main term appearing in Lemma \ref{l:deduction}.
Invoking the multiplicativity of the $h_i$, this expression equals
\begin{align} \label{eq:deduction-ain-term}
\sum_{ \substack{ w_1, \dots, w_r \\ \in \mathcal{W}(N)}}
\left(\prod_{j=1}^r h_j(w_j)\right)
\sum_{ \substack{ (U_1, \dots, U_r) \\ 
\in \mathcal{U}(w_1, \dots, w_r)}}
\sum_{\substack{\m \in \\ \ZZ^s \cap (T/w \widetilde W(N))\fK}}
\prod_{i=1}^r
h_i\left(\frac{w \widetilde W(N)}{w_i} \psi_i(\m) + \frac{U_i}{w_i} \right).
\end{align}
Since $\gcd(W(N), U_i/w_i)=1$ for all $i$, Proposition \ref{p:main} applies to 
the inner summation over $\m$ when setting $W_i = w \widetilde W(N) / w_i$. 
Since $\lcm(w/w_1, \dots, w/w_r) = w$, we have
$W' := \lcm(W_1, \dots, W_r) = w \widetilde W(N)$ and $W_i T/W' = T/w_i$. 
Thus, by Proposition \ref{p:main} the inner sum satisfies
\begin{align*}
\Bigg( \frac{ \vol(\fK)T^s}{(w \widetilde W)^s} \Bigg)^{-1}
&\sum_{\substack{\m \in \\ \ZZ^s \cap (T/w \widetilde W(N))\fK}}
\prod_{i=1}^r
h_i\left(\frac{w \widetilde W(N)}{w_i} \psi_i(\m) + \frac{U_i}{w_i} \right) \\
& = 
\Bigg(\int_{\fK} \prod_{\ell = 1}^r (\psi_{\ell}(\x))^{it_{\ell}} \d \x \Bigg)
\prod_{j=1}^r
\Big(\frac{T}{w_j} \Big)^{it_j}
S_{h_j^*}\left(N;\widetilde{W}(N),\frac{U_j}{w_j}\right) \\
& \qquad + 
(\kappa(\delta) + o(1))\Bigg(\prod_{j = 1}^r E_{h_j}(N,\widetilde{W}(N)) \Bigg) ,
\end{align*}
where $h_j^*(m) = h_j(m) m^{-it_j}$.
Recalling the definition of $\mathcal{U}(w_1, \dots, w_r)$,
making the change of variables $A_j = U_j/w_j$ and noting that
$h_j(w_j)/w_j^{it_j}=h_j^*(w_j)$, we deduce that 
\eqref{eq:deduction-ain-term} equals
\begin{align*}
&\beta_0 \sum_{\substack{w_1, \dots, w_r \\ \in \mathcal{W}(N) }}
\sum_{\substack{A_1,\dots,A_r \\ \in (\ZZ/\widetilde W \ZZ)^*}}
\Bigg(\prod_{j=1}^r 
h_j^*(w_j)S_{h_j^*}\Big(N;\widetilde W,A_j\Big)\Bigg)
\frac{\vol(\fK) T^s}{(w\widetilde W)^s}
\sum_{\substack{\v \in \\ (\ZZ/w \widetilde W \ZZ)^s}}
\prod_{i=1}^r 
\1_{\vphi_i(\v) \equiv w_i A_i \Mod{w_j \widetilde W}} \\
&+
(\kappa(\delta) + o(1))
T^s
\Bigg(\prod_{j = 1}^r E_{h_j}(N,\widetilde{W}(N)) \Bigg)
\sum_{\substack{A_1,\dots,A_r \\ \in (\ZZ/\widetilde W \ZZ)^*}}
\sum_{\substack{w_1, \dots, w_r \\ \in \mathcal{W}(N) }}
\Bigg(\prod_{i=1}^r |h_i(w_i)|\Bigg)
\beta_{\boldsymbol \vphi}(w_1A_1, \dots, w_rA_r),
\end{align*}
where
$$\beta_0 = T^{i(t_1 + \dots + t_r)}\int_{\fK} \prod_{\ell = 1}^r (\psi_{\ell}(\x))^{it_{\ell}} \d \x $$
and where $\beta_{\boldsymbol \vphi}(w_1A_1, \dots, w_rA_r)$ is defined in \eqref{eq:beta_phi-0}.

Since the main term in the expression above agrees with the one in \eqref{eq:main}, it now
remains to show that the error term in that expression is of the shape
$$
(\kappa(\delta) + o(1))
\left( \frac{T^{s}}{(\log N)^r} 
\prod_{i=1}^r  \prod_{p \leq N} 
\left(1 + \frac{|h_i(p)|}{p} \right)
\right).
$$
Recalling the definition of $E_{h_j}(N,\widetilde{W}(N))$ from \eqref{eq:def-E}, this will follow, 
provided we can show that:
$$
\sum_{\substack{A_1,\dots,A_r \\ \in (\ZZ/\widetilde W \ZZ)^*}}
\sum_{\substack{w_1, \dots, w_r \\ \in \mathcal{W}(N) }}
\Bigg(\prod_{i=1}^r |h_i(w_i)|\Bigg)
\beta_{\boldsymbol \vphi}(w_1A_1, \dots, w_rA_r)
\ll \prod_{j=1}^r  \prod_{p|\widetilde{W}(N)}
\left(1 + \frac{|h_i(p)|}{p} \right).
$$
To this end, note that
\begin{align} \label{eq:bounding_main_error}
 \nonumber
&\sum_{\substack{A_1,\dots,A_r \\ \in (\ZZ/\widetilde W \ZZ)^*}}
\sum_{\substack{w_1, \dots, w_r \\ \in \mathcal{W}(N) }}
\Bigg(\prod_{i=1}^r |h_i(w_i)|\Bigg)
\beta_{\boldsymbol \vphi}(w_1A_1, \dots, w_rA_r)\\
\nonumber
&\leq \sum_{\substack{w_1, \dots, w_r \\ \in \mathcal{W}(N) }}
\Bigg(\prod_{i=1}^r |h_i(w_i)|\Bigg)
\frac{1}{(w\widetilde W)^s}
\sum_{\substack{\v \in \\ (\ZZ/w \widetilde W \ZZ)^s}}
\prod_{i=1}^r 
\1_{\vphi_i(\v) \equiv 0 \Mod{w_i}} \\
&\leq \prod_{p|\widetilde W(N)}
\sum_{\substack{a_1, \dots, a_r \in \NN_0 }}
\Bigg(\prod_{i=1}^r |h_i(p^{a_i})|\Bigg)
\frac{1}{p^{a_{\max}s}}
\sum_{\substack{\v \in \\ (\ZZ/p^{a_{\max}} \ZZ)^s}}
\prod_{i=1}^r 
\1_{\vphi_i(\v) \equiv 0 \Mod{p^{a_i}}}, 
\end{align}
where $a_{\max} = \max(a_1, \dots, a_r)$.

If $n(\mathbf a)$ denotes the number of non-zero coordinates of 
$\mathbf a = (a_1, \dots, a_r)$, then the inner sum in the final expression above satisfies 
(cf.\ \cite[Lemma 1.3]{GT-linearprimes} and \cite[eqn (5.6)]{bm}):
\begin{equation} \label{eq:div_density_bounds}
\frac{1}{p^{a_{\max}s}}
\sum_{\substack{\v \in \\ (\ZZ/p^{a_{\max}} \ZZ)^s}}
\prod_{i=1}^r 
\1_{\vphi_i(\v) \equiv 0 \Mod{p^{a_i}}}
=
\begin{cases}
=1, &\mbox{if $n(\mathbf a)=0$,}\\
= p^{-\max_i \{a_i\}}, &  \mbox{if $p\gg_L 1$ and $n(\mathbf a)=1$,}\\
\leq p^{-\max_{i\neq j}\{a_i+a_j\}},
      & \mbox{if $p\gg_L 1$ and $n(\mathbf a)>1$,}\\
\ll_{L} p^{-\max_i \{a_i\}}, &
 \mbox{otherwise,}
\end{cases}
\end{equation}
where $L = \max_{1 \leq i \leq r} \{\|\vphi_i\|,r,s\}$
and where $\|\vphi_i\|$ denotes the maximum modulus of the coefficients of
$\vphi_i$.
Let $L_0 \gg_{L} 1$ be such that the second alternative applies to $p > L_0$ and
suppose further that $L_0 > H$.
Since, given any $k \in \NN_0$ there are at most $k^r$ tuples $(a_1, \dots, a_r)$ with $a_{\max} = k$
and taking into account that $|h_j(p^k)| \leq H^k$ and $h_j(p^k) \ll_{\eps} p^{\eps k}$,
we deduce that \eqref{eq:bounding_main_error} is bounded by
\begin{align*}
&\leq \prod_{p' \leq L_0}
\Bigg(1 + O_{L} \Big( \sum_{k\geq 1} \frac{{p'}^{k\eps}k^r}{{p'}^k}\Big) \Bigg) 
\prod_{\substack{p > H \\ p|\widetilde W(N)}}
\Bigg(1 + \frac{|h_1(p)| + \dots + |h_r(p)|}{p} + O_{L} \Big( \sum_{k\geq 2} \frac{H^{k}k^r}{p^k} \Big)\Bigg) \\
&\ll_{L,H,r} \prod_{p|\widetilde W(N)}
\Bigg(1 + \frac{|h_1(p)| + \dots + |h_r(p)|}{p} + O_{L,H,r}(p^{-2}) \Bigg) 
\ll_{L,H,r} \prod_{p|\widetilde W(N)} \prod_{i=1}^r
\Bigg(1 + \frac{|h_i(p)|}{p} \Bigg).
\end{align*}
This completes the proof of Theorem \ref{t:main} and leaves us with the task of establishing 
Theorem \ref{t:majorants-s2}, i.e.\ the existence of families of pseudorandom majorants for elements of 
$\mathcal{F}^*$.

\section{Majorants for multiplicative functions} \label{s:majorants-intro}
The following three sections are devoted to the construction of suitable majorant functions for elements 
of the class $\mathcal{F}^*$.
We will define in Section \ref{s:linearform} what it means for a majorant function to be pseudorandom.
Here, we will show that for every $N \in \NN$, every $\gamma \in (0,1/2)$ and every $h \in \mathcal{F}^*$, 
there exists a function $\nu_h= \nu_h^{(N)}: \{1, \dots, N\} \to \RR_{\geq0}$ with the following properties:
\begin{enumerate}
 \item (\emph{Majorisation}). There exists an absolute constant $C>0$ such that 
 $$|h(n)| \leq C \nu_h^{(N)}(n)$$ 
 for all $N \in \NN$ and all $n \in \{1,\dots ,N\}$.
 (In Section \ref{s:linearform} we will see that it in fact suffices that this condition holds for each $N$
 outside certain exceptional subsets of $\{1,\dots ,N\}$).
 \item (\emph{Truncated divisor sum structure}).
 Outside an exceptional set, we have $$\nu_h^{(N)}(n) = \sum_{d|n, d \leq N^{\gamma}} \lambda_d$$ for suitable weights $\lambda_d$.
 \item (\emph{Average order condition}).
 If $B>1$ is a constant and if $1 < W' \leq (\log N)^{B}$ is an integer that is divisible by the primorial $W(N)$, then
 \begin{align*}
 |S_h(N;W',A)|
 \ll S_{\nu_h^{(N)}}(N;W',A)
 \ll_B \frac{W'}{ \phi(W')}
 \frac{1}{\log N} 
 \prod_{\substack{p \leq N \\ p \nmid W'}}
 \left( 1 + \frac{|h(p)|}{p}\right)
 \end{align*}
 whenever $\gcd(A,W')=1$.
\end{enumerate}
Condition (2) is not a necessary condition. We will, however, make essential use of the truncated divisor sum
structure in order to show in Section \ref{s:linearform} that the majorants that we are about to construct 
are indeed pseudorandom. 
Condition (3) will be established in Section \ref{s:average-order}. 
This condition is important since, when applying the machinery from \cite{GT-linearprimes} to prove asymptotic 
results on correlations of the form \eqref{eq:aim}, 
the mean value $S_{\nu_{h_j}^{(N)}}(N)$ of the chosen majorant for each individual function $h_j$ 
will enter as a factor in the error term of the asymptotic formula.
For this reason, we require our majorant functions to have as small as possible average order.

\subsection*{Initial set-up}
For any function $h \in \mathcal{F}^*$, we obtain a simple majorant function 
$h': \NN \to \RR$ by setting $h'(n) := h^{\sharp}(n)h^{\flat}(n)$, where
$h^{\sharp}$ and $h^{\flat}$ are multiplicatively defined by
$$
h^{\sharp}(p^k) = \max(1, |h(p)|, \dots , |h(p^k)|)
$$
and
$$
h^{\flat}(p^k) = \min(1, |h(p^k)|),
$$
respectively. 
It is immediate that $|h(n)| \leq h'(n)$ for all $n \in \NN$.
The function $h^{\sharp}$ belongs to the class of functions for which 
pseudo-random majorants were already constructed in \cite[\S7]{bm}.
Our pseudo-random majorant for $h$ will arise as a product of separate majorants 
for $h^{\sharp}$ and $h^{\flat}$.
Thus, our main task in the present work is to construct general pseudo-random majorants for  
bounded multiplicative functions.

\section{A majorant for $h^{\sharp}$}\label{s:largemajorant}
Before we turn to the case of bounded multiplicative functions, let us  
record the known family of pseudo-random majorants $\nu_{\sharp}^{(N)}: \{1, \dots, N\} \to \RR_{\geq 0}$ 
for $h^{\sharp}$. 

For this purpose, set $g^{\sharp} = \mu * h^{\sharp}$ and define for any
$\gamma \in (0,1/2)$ and any sufficiently large $N$ the truncation
\begin{equation} \label{eq:h-sharp-truncation}
 h_{\gamma}^{(N)}(m) 
= \sum_{d \in \NN} \1_{d|m}g^{\sharp}(d)
  \chi\Big(\frac{\log d}{\log N^{\gamma}}\Big)
\end{equation}
of the convolution $h^{\sharp} = \1*g^{\sharp}$, where $\chi:\RR \to \RR_{\geq 0}$ is a 
smooth function with support in $[-1,1]$, which is monotone on $[-1,0]$ and 
$[0,1]$ and has the property that $\chi(x)=1$ for 
$x\in [-\frac{1}{2},\frac{1}{2}]$. 
Note that $g^{\sharp}(d)= 0$ whenever $d$ has a prime factor $p$ such that $h^{\sharp}(p) = 1$,
i.e.\ such that $|h(p)| \leq 1$.
Thus, if we define the set of primes 
$$\mathcal{P}_{\sharp} : \{p \text{ prime }: |h(p)| > 1\}$$
and the set of integers
$$\<\mathcal{P}_{\sharp}\> = \{m \in \NN : p | m \Rightarrow p \in \mathcal{P}_{\sharp} \},$$
then we may restrict the summation in \eqref{eq:h-sharp-truncation} to $d \in \<\mathcal{P}_{\sharp}\>$.

With the truncation \eqref{eq:h-sharp-truncation} of $h^{\sharp}$ in place, \cite[Proposition 7.6]{bm} shows that  
for any fixed value of $\gamma \in (0, 1/2)$ we have
$$h^{\sharp}(n) \ll \nu_{\sharp}^{(N)}(n)+ \1_{n \in \mathcal{S}} h^{\sharp}(n)$$ 
for $n \leq N$, where the exceptional set is defined as
$$
\mathcal{S} = \left\{
n \leq N : 
\Big(\exists p. v_p(n) \geq \max\{2, C_1 \log_p \log N\}\Big)   
\text{ or}
\prod_{p \leq N^{1/(\log \log N)^3}} p^{v_p(n)} 
\geq N^{\gamma/\log\log N}
\right\},
$$
and where the majorant outside the exceptional set is given by
\begin{equation} \label{eq:nu_sharp} 
 \nu_{\sharp}^{(N)}(m) =
 \sum_{\kappa = 4/\gamma}^{[(\log \log N)^3]}
 \sum_{\lambda = \lceil \log_2 \kappa - 2\rceil}^{[\log_2((\log \log N)^3)]}
 \sum_{u \in U(\lambda ,\kappa)}
 H^\kappa \1_{u|m} h^{\sharp}(u)
 {h}_{\gamma}^{(N)}\left( \frac{m}{ \prod_{p|u} p^{v_p(m)}} \right) ,
\end{equation}
where each set $U(\lambda,\kappa)$ is a sparse subset of the integers 
up to $N^{\gamma}$ that is defined as follows.
Set $\omega(\lambda,\kappa)
= \left\lceil \frac{\gamma \kappa(\lambda + 3 - \log_2 \kappa)}{200}
\right\rceil
$
and $I_\lambda=[N^{1/2^{\lambda+1}}, N^{1/2^{\lambda}}]$. 
Then
$$
U(\lambda,\kappa) =
\begin{cases}
\{1\},     &\text{if }\kappa=4/\gamma \text{ and } \lambda =   \log_2\kappa -2,\\
\emptyset, &\text{if }\kappa=4/\gamma \text{ and } \lambda\neq \log_2\kappa -2,\\
\bigg\{ p_1 \dots p_{\omega(\lambda,\kappa)} :
  \begin{array}{l}
   p_i \in I_\lambda \mbox{ distinct primes} \\
   h^{\sharp}(p_i) \not= 1
  \end{array}
\bigg\}, & \mbox{if }\kappa > 4/\gamma.
\end{cases}
$$
We note for later reference that $u \in \<\mathcal{P}_{\sharp}\>$ whenever $u \in U(\lambda,\kappa)$
and that $p \geq N^{1/(\log \log N)^3}$ for any prime divisor $p|u$ of any $u \in U(\lambda,\kappa)$ 
that appears in the definition of $\nu_{\sharp}$.

An important technical property of the above majorant construction is that the set of 
integers $u \in \bigcup_{\kappa, \lambda} U(\lambda,\kappa)$ is fairly sparse.
In particular, one can show (cf.\ the computation at the end of \cite[\S7]{lmd}) 
that whenever $f$ is a multiplicative function that is bounded at primes
(e.g.\ $f(n)=h^{\sharp}(n)$ or $f(n)=h^{\sharp}(n)d(n)^r$), then
\begin{equation}\label{eq:c_0}
\sum_{\boldsymbol\kappa} \sum_{\boldsymbol\lambda} \sum_{\u}
\prod_{j=1}^r
\frac{H^{\kappa_j} |f(u_j)|}{u_j}
< \infty.
\end{equation}
Moreover, the following stronger version holds:
\begin{equation}\label{eq:c_0-lcm}
\sum_{\boldsymbol\kappa} \sum_{\boldsymbol\lambda} \sum_{\u}
\frac{H^{\kappa_1 + \dots + \kappa_r} |f(u_1) \dots f(u_r)|}{\lcm(u_1, \dots, u_r)}
< \infty.
\end{equation}
To prove \eqref{eq:c_0-lcm}, note that $\gcd(u_1, u_2) = 1$ for $u_i \in U(\lambda_i, \kappa_i)$, 
$i=1,2$ unless $\lambda_1=\lambda_2$.
Thus, \eqref{eq:c_0-lcm} is bounded by
\begin{equation*}
\sum_{\substack{\bd \in \NN^\ell, \ell>1:\\ d_1 + \dots +d_\ell = r}}
\prod_{j=1}^\ell \Bigg(
\sum_{\kappa_1, \dots, \kappa_{d_j}} 
\sum_{\lambda = \lceil \log_2 \max_i \kappa_i -2 \rceil }^{[\log_2((\log \log N)^3)]} 
\sum_{\substack{\u \in \NN^{d_j} \\ u_i \in U(\lambda, \kappa_i)}}
\frac{H^{\kappa_1 + \dots + \kappa_{d_j}} |f(u_1) \dots f(u_{d_j})|}{\lcm(u_1, \dots, u_{d_j})} \Bigg),
\end{equation*}
and it suffices to show that for every $1 \leq d_j \leq r$, the respective factor above converges.
To show this, let $C>1$ be a constant such that $|f(p)| \leq C$ for all primes and let $1 \leq d \leq r$.
Then, the above factor satisfies
\begin{align*}
&\sum_{\boldsymbol\kappa} \sum_{\lambda = \lceil \log_2 \max_i \kappa_i -2 \rceil }^{[\log_2((\log \log N)^3)]} 
\sum_{\substack{\u \in \NN^d: \\ u_i \in U(\lambda, \kappa_i)}}
\frac{ H^{\kappa_1 + \dots + \kappa_d} |f(u_1) \dots f(u_d)|}
     {\lcm(u_1, \dots, u_d)}\\
&\leq \sum_{\boldsymbol\kappa} \sum_{\lambda = \lceil \log_2 \max_i \kappa_i -2 \rceil }^{[\log_2((\log \log N)^3)]} 
\sum_{\substack{\u \in \NN^d: \\ u_i \in U(\lambda, \kappa_i)}}
\prod_{j=1}^r
\frac{ H^{\kappa_1 + \dots + \kappa_d} C^{\omega(\lambda, \kappa_1) + \dots + \omega(\lambda, \kappa_d) } }
     {\lcm(u_1, \dots, u_d)}.
\end{align*}
Since 
$\omega(\lcm(u_1, \dots, u_d)) 
\geq \max_i \omega(\lambda, \kappa_i) 
\geq \frac{1}{d}\sum_{1 \leq i \leq d} \omega(\lambda,\kappa_i) =: \omega^*(\lambda, \boldsymbol{\kappa})$, the above is bounded by
\begin{align*}     
&\leq \sum_{\boldsymbol\kappa} \sum_{\lambda = \lceil \log_2 \max_i \kappa_i -2 \rceil }^{[\log_2((\log \log N)^3)]} 
H^{\kappa_1 + \dots + \kappa_d} C^{\omega(\lambda, \kappa_1) + \dots + \omega(\lambda, \kappa_d) }
\sum_{t \geq \omega^*(\lambda, \boldsymbol{\kappa})} \frac{1}{t!}
\Big( \sum_{p \in I_{\lambda}} \frac{1}{p}\Big)^{t}.
\end{align*}
Recall that each interval $I_{\lambda}$ is of the form $[y,y^2]$ with
$y > N^{1/O((\log \log N)^3)}$, i.e.\ $y \to \infty$ as $N \to \infty$. Thus, the above is bounded by
\begin{align} \label{eq:c_0-lcm-proof-1}
\nonumber
&\leq \sum_{\boldsymbol\kappa} \sum_{\lambda = \lceil \log_2 \max_i \kappa_i -2 \rceil }^{[\log_2((\log \log N)^3)]} 
H^{\kappa_1 + \dots + \kappa_d} C^{\omega(\lambda, \kappa_1) + \dots + \omega(\lambda, \kappa_d) }
\sum_{t \geq \omega^*(\lambda, \boldsymbol{\kappa})} \frac{1}{t!}
\Big(\log 2 + o(1)\Big)^{t} \\
&\leq \sum_{\boldsymbol\kappa} \sum_{\lambda \geq 1} 
H^{\kappa_1 + \dots + \kappa_d} C^{\gamma \lambda \kappa^*/200}
\sum_{t \geq \gamma \lambda \kappa^*/200} \frac{1}{t!}
\Big(\log 2 + o(1)\Big)^{t},
\end{align}
where $\kappa^* = \frac{1}{d}  \sum_{1\leq i \leq d} \kappa_i$.
Using a standard tail estimate for the exponential series, we have
\begin{align*}
\sum_{t \geq \gamma \lambda \kappa^*/200} \frac{1}{t!}
\Big(\log 2 + o(1)\Big)^{t}
&\ll \frac{1}{[\gamma \lambda \kappa^*/200]!} 
    \Big(\log 2 + o(1)\Big)^{\gamma \lambda \kappa^*/200}\\
&\ll \Big(\frac{200 e (\log 2 + o(1))}{\gamma \lambda \kappa^*}\Big)^{\gamma \lambda \kappa^*/200}\\
&\ll \prod_{1 \leq i \leq d}
     \Big(\frac{200 e (\log 2 + o(1))}{\gamma \lambda \kappa^*}\Big)^{\gamma \lambda \kappa_i/(200d)}\\
&\ll \prod_{1 \leq i \leq d}
     \Big(\frac{200 e d (\log 2 + o(1))}{\gamma \lambda \kappa_i}\Big)^{\gamma \lambda \kappa_i/(200d)}.
\end{align*}
Thus, \eqref{eq:c_0-lcm-proof-1} is bounded by
\begin{align*}
 \ll \prod_{1 \leq i \leq d} \sum_{\kappa_i} \sum_{\lambda \geq 1} 
 H^{\kappa_i}
 \Big(\frac{200 e d C (\log 2 + o(1))}{\gamma \lambda \kappa_i}\Big)^{\gamma \lambda \kappa_i/(200d)}.
\end{align*}
This expression, finally, is bounded by
\begin{align*}
&\ll  \Bigg( \sum_{\kappa} 
 \frac{H^{\kappa}}{\kappa^{\gamma \kappa/(200d)}} \sum_{\lambda \geq 1} 
 \Big(\frac{200 e d C (\log 2 + o(1))\gamma^{-1}}{ \lambda }\Big)^{\gamma \lambda \kappa/(200d)} \Bigg)^d\\
&\ll  \Bigg( \sum_{\kappa} 
 \frac{H^{\kappa}}{\kappa^{\gamma \kappa/(200d)}} 
 \Bigg(
 \sum_{\lambda \geq 1} 
 \Big(\frac{200 e d C (\log 2 + o(1))\gamma^{-1}}{ \lambda }\Big)^{\gamma \lambda /(200d)} 
 \Bigg)^{\kappa}
 \Bigg)^d,
\end{align*}
which is seen to converge, since $\sum_{j\geq 1} A^j j^{-\alpha j} < \infty$ for all constants $A,\alpha>0$.
This completes the proof of \eqref{eq:c_0-lcm}.

\section{Majorants for bounded multiplicative functions}\label{s:majorant}
Any non-negative bounded multiplicative function $h^{\flat}$ has the property that whenever 
$m|n$ is a divisor that is coprime to its cofactor $n/m$, then $h^{\flat}(n) 
\leq h^{\flat}(m)$.
The key step in turning this simple observation into the construction of a 
pseudo-random majorant is to find a systematic way of assigning to any 
integer $n$ a suitable divisor $m$.
The main property this map must have is that the pre-image of any divisor $m$
should be easily reconstructible, a property which will allow us to swap the order 
of summation in later computations.
The following type of assignment already featured in Erd\H{o}s's work \cite{erdos} 
on the divisor function:

Given a cut-off parameter $N$ and an integer $n \in [N^{\gamma},N]$, let  
$D_{\gamma}(n)$ denote the largest divisor of $n$ that is of the form
$$\prod_{p < Q} p^{v_p(n)}, \quad (Q \in \NN)$$
but does not exceed $N^{\gamma}$.
If $m \in [1, N^{\gamma})$ is an integer then its inverse image takes the 
form
$$D_{\gamma}^{-1} (m) =
\bigg\{mm' \in [1,N] : 
\begin{array}{l}
P^+(m) < P^-(m')~~ \text{ and} \cr
mQ^{v_Q(m')}>N^{\gamma} \text{ if } Q=P^-(m')
\end{array}
\bigg\},$$
where $P^+(m)$, resp.\ $P^-(m)$, denote the largest, resp.\ smallest, 
prime factor of an integer $m$. 
For our purpose it turns out to be of advantage to restrict attention to divisors 
$$m \in \<\mQ\> = \{m: p|m \implies p \in \mQ\},$$
where\footnote{We note as an aside that by Lemma \ref{l:elliott} the values of $h_{\flat}$ at higher 
prime powers do not influence the asymptotic order of $S_{h_{\flat}}(N)$ and need for this reason 
not be taken into account in the construction of $\nu_{\flat}$.}
$$ \mQ = \{p: |h(p)| < 1\}.$$
Thus, if $n \in [1,N]$ is an integer that factorises as $n=mq$ with 
$m \in \<\mQ\>$ and $p \not\in \mQ$ for all prime divisors $p|q$, then we set
$$D'_{\gamma}(n)
=
\begin{cases}
m & \text{if }  m \leq N^{\gamma}\cr
D_{\gamma}(m) & \text{if }  m > N^{\gamma}
\end{cases}.
$$
Our next aim is to show that a sufficiently smoothed version of the function 
$D'_{\gamma}$ can be written as a truncated divisor sum.
To detect whether a given divisor $m|n$ is of the form
$\prod_{p\leq Q} p^{v_p(n)}$ for some $Q$ or, equivalently, whether 
$q=\frac{n}{m}$ has no prime factor $p \leq Q$, we make use of a sieve 
majorant similar to the one considered in \cite[Appendix D]{GT-linearprimes}.
The two essential differences are that the parameter corresponding to $Q$ cannot 
be fixed in our application and that the divisor sum will be restricted to 
elements of the set $\<\mQ\>$.
Thus, let $\sigma_{\flat}: \RR \times \NN \to \RR_{\geq 0}$ be defined as
$$
\sigma_{\flat}(Q; q) =
\Bigg( \sum_{\substack{\delta|q\\ \delta\in \<\mQ\> }} \mu(\delta) 
\chi\left( \frac{\log \delta}{\log Q} \right) \Bigg)^2,
$$
where $\chi:\RR \to \RR_{\geq 0}$ is a smooth function with support in $[-1,1]$ 
and the property $\chi(x)=1$ for $x\in [-\frac{1}{2},\frac{1}{2}]$.
This yields a non-negative function with the property that 
$\sigma_{\flat}(Q; q) = 1$ if $q$ is free from prime factors $p \in \mQ$ with 
$p \leq Q$.
Setting, for $1 \leq n \leq N$,
\begin{align*}
 \nu'(n) = 
 \sum_{\substack{ m|n \\ m\in\<\mQ\> \\ m < N^{\gamma}}}
 h^{\flat}(m)
 \sigma_{\flat}\left(N^{\gamma}; \frac{n}{m}\right)
 +\sum_{
   \substack{ Q \leq N^{\gamma} \\ Q \in \mQ } }
 \sum_{\substack{ m|n \\ m\in\<\mQ\> \\ p|m \Rightarrow p<Q}} 
 \sum_{Q^k|n} 
 h^{\flat}(mQ^k)
 \1_{m < N^{\gamma}}
 \1_{Q^k m \geq N^{\gamma}}
 \sigma_{\flat}\left(Q; \frac{n}{m Q^k}\right),
\end{align*}
we obtain a (preliminary) majorant $\nu': \NN \to \RR_{\geq 0}$ for 
$h^{\flat}$.
The first of two small modifications consists of inserting smooth cut-offs for 
$m$ and $Q^km$, leading to the majorant function $\nu'': \NN \to \RR_{\geq 0}$ 
defined as
\begin{align*}
 \nu''(n) = 
 & \sum_{\substack{ m|n \\ m\in \< \mQ\> }}
 h^{\flat}(m)
 \chi\left( \frac{\log m}{\log N^{\gamma}} \right)
 \sigma_{\flat} \left(N^{\gamma}; \frac{n}{m}\right) \\
&+
 \sum_{
   \substack{ Q \in \mQ} }
 \sum_{\substack{ m|n \\ m\in \<\mQ\> \\ p|m \Rightarrow p<Q}} 
 \sum_{Q^k|n}
 h^{\flat}(m Q^k)
 \lambda\left( \frac{\log mQ^k}{ \log N^{\gamma}} \right)
 \sigma_{\flat} \left(Q; \frac{n}{m Q^k}\right),
\end{align*}
where $\lambda$ is a smooth cut-off of the interval $[1, 2]$ which is 
supported in $[1/2, 4]$ and takes the value $1$ on the interval 
$[1, 2]$.
To carry out the second simplification, we exclude a sparse exceptional set
related to the one from Section \ref{s:largemajorant}.
If $Q \leq N^{\gamma/(\log \log N)^3}$, then an integer $n < N$ certainly belongs to the 
exceptional set $\mathcal{S}$ from Section \ref{s:largemajorant} if it has a 
divisor of the form $Q^k m > N^{\gamma}$, where $p|m \Rightarrow p<Q$.
If $Q > N^{\gamma/(\log \log N)^3} = (\log N)^{\gamma (\log N)/(\log \log N)^4}$ 
and if $N$ is sufficiently large, then $Q^2>(\log N)^{C_1}$ so that any multiple 
of $Q^2$ again belongs to $\mathcal{S}$.
Thus, by defining
\begin{align} \label{eq:nu_flat}
 \nu_{\flat}^{(N)}(n) =
& \sum_{\substack{ m|n \\ m\in \< \mQ\> }}
 h^{\flat}(m)
 \chi\left( \frac{\log m}{\log N^{\gamma}} \right)
 \sigma_{\flat} \left(N^{\gamma}; \frac{n}{m}\right) \\
\nonumber
&+ \sum_{
   \substack{
   Q|n \\ Q \in \mQ 
   \\ Q > N^{\gamma/(\log \log N)^3} }}
 \sum_{\substack{ m|n \\ m \in \< \mQ \> \\ p|m \Rightarrow p<Q}}
 h^{\flat} (Qm)
 \lambda\left( \frac{\log Qm}{\log N^{\gamma}} \right)
 \sigma_{\flat} \left(Q; \frac{n}{Qm}\right),
\end{align}
we obtain a non-negative function with the property that
$$
h^{\flat}(n) \ll  \nu_{\flat}^{(N)}(n)  + \1_{\mathcal{S}}(n)
$$
for any integer $n \in [1,N]$, provided $N$ is sufficiently large with respect to 
the value of $C_1$ from the definition of $\mathcal{S}$.
By construction (cf. \cite{erdos} or \cite[Lemmas 3.2 and 3.3]{lmd}) it 
follows that
$\sum_{n\leq N} \1_{\mathcal{S}}(n) \ll N (\log N)^{-C_1/2}$.
Thus, choosing $C_1$ sufficiently large, we may in view of 
\eqref{eq:elliott-kish} and property (iii) of Definition \eqref{d:M} ensure that
$$
\sum_{n\leq N} (1+|h(n)|)\1_{\mathcal{S}}(n) = o(\sum_{n \leq N} |h(n)|),
$$
i.e., that $\mathcal{S}$ is indeed an exceptional set.

\section{The product $\nu_{\sharp} \nu_{\flat}$ is pseudo-random} \label{s:linearform}
The aim of this section is to show that the majorant functions constructed in the previous section 
are pseudorandom, much in the sense of \cite[\S 6]{GT-linearprimes}.
However, since we are working with a larger class of functions, several adjustments to this notion are
required and we include for this reason a complete definition.

\begin{definition}[Family of D-pseudorandom majorants, cf.\  \cite{GT-linearprimes}] \label{def:majorant-family}
Let $(N_{\tau})_{\tau \in \mathcal{T}}$ be an unbounded sequence of natural numbers indexed by 
some index set $\mathcal{T}$. 
Let   
$$(f_1^{(\tau)}, \dots, f_r^{(\tau)}: \{1, \dots, N_{\tau}\} \to \CC)_{\tau \in \mathcal{T}}$$ 
be a family of $r$-tuples of functions, and let 
$D \geq r$ be an integer.
Then a family of functions $$(\nu^{(\tau)}: \{1, \dots, N_{\tau}\} \to \RR_{>0})_{\tau \in \mathcal{T}}$$ 
is called a family of \emph{$D$-pseudorandom majorants} for the given family of functions if
the following four conditions hold:
 \begin{enumerate}
 \item (\emph{Normalisation}). We have
 $$
 \frac{1}{N_{\tau}} \sum_{n\leq N_\tau} \nu^{(\tau)}(n) = 1 +o_{N_\tau \to \infty}(1), \qquad (\tau \in \mathcal{T}).
 $$
 \item (\emph{Majorisation condition}). There exists an absolute constant $C>0$ 
 and weights  $E_{f_1}^{(\tau)}, \dots, E_{f_r}^{(\tau)} \in \RR_{>0}$ such that 
 $$|f_i^{(\tau)}(n)| \leq C \nu^{(\tau)}(n) E_{f_i}^{(\tau)}$$ 
 for all $\tau \in \mathcal{T}$, all $n\leq N_{\tau} \setminus \mathcal{S}(N_\tau)$ and every $1 \leq i \leq r$, 
 where $\mathcal{S}(N_{\tau})$ is any exceptional set with the property that, 
 if $B \geq 0$ is fixed and $T \in [N_{\tau} (\log N_{\tau})^{-B}, N_{\tau}]$, then, as $N_{\tau}, T \to \infty$, we have
 \begin{align*}
&\sum_{\n \in \ZZ^s \cap T\fK} \prod_{i=1}^r f_i^{(\tau)}(\vphi_i(\n)) 
\1_{\vphi_i(\n) \not\in \mathcal{S}(N)}\\
&= \sum_{\n \in \ZZ^s \cap T\fK} \prod_{i=1}^r f_i^{(\tau)}(\vphi_i(\n))
+o_{T \to \infty} \bigg( \vol(T\fK) \prod_{i=1}^r S_{|f_i^{(\tau)}|}(T) \bigg)
\end{align*}
for any system of linear polynomials $\vphi_1, \dots, \vphi_r \in \ZZ[x_1, \dots, x_s]$ 
as in Theorem \ref{t:main} with $r,s,L \leq D$,
and for any $\mathfrak{K} \subset \RR^s$ as in Theorem \ref{t:main}.
 \item (\emph{Linear forms condition}). 
 For each $T \in \NN$, let $T' \in (T, O_D(T))$ be a prime that is sufficiently large for the following to hold.
 Let $1 \leq d,t \leq D$ and let $\phi_1, \dots \phi_t \in \ZZ[X_1, \dots, X_d]$ be any system of 
 linear polynomials such that $|\phi_1(\0)|, \dots, |\phi_d(\0)| \leq DT$ and such that all other coefficients 
 are bounded by $D$ in absolute value. Suppose further that  $\phi_i(\n) \geq 0$ holds for all 
 $\n \in \{1, \dots, T\}^d$ and all $1\leq i \leq t$. 
 Then $0 \leq \phi_i(\n) < T'$ for all $\n \in \{1, \dots, T \}^d$, i.e.\ 
 the embedding $\iota: \ZZ/T'\ZZ \hookrightarrow \NN_0$ that sends each class to its smallest non-negative 
 representative induces the identity
 $$\iota (\phi_i(\n) \Mod{T'}) =  \phi_i(\n), \qquad (\n \in \{1, \dots, T \}^d).$$
 
 The family $\{\nu^{(\tau)} \}_\tau$ then satisfies the $D$-linear forms condition if the following holds:
 For any constant $B \geq 0$, any sufficiently large $N_{\tau}$,
 any $T\in [N_{\tau}(\log N_{\tau})^{-B},N]$ and any $T' \ll_D T$ as above, 
 the family of functions ${\tilde \nu}_{\tau,T}: \{1, \dots, T'\} \to \RR_{>0}$, defined via 
 ${\tilde \nu}_{\tau,T}(n) = \frac{1}{2}\{1 + \nu^{(\tau)}(n)\1_{n\leq T} + \1_{n > T}\}$ for $1 \leq n \leq T'$,
 satisfies the following estimate.
 Identifying $\ZZ/T'\ZZ$ with $\{1, \dots, T'\}$, we have, as $N_{\tau} \to \infty$,
 $$
 \frac{1}{{T'}^d}\sum_{\n \in (\ZZ/T'\ZZ)^{d}} \prod_{1 \leq j \leq t} {\tilde\nu}_{\tau,T}(\phi_j(\n)) 
 = (1 + o_{B,D}(1))
 \Bigg( \frac{1}{T'} \sum_{n \in \ZZ/T'\ZZ}  {\tilde\nu}_{\tau,T}(n) \Bigg)^t
 $$
 for all systems $\phi_1, \dots, \phi_t \in \ZZ[X_1, \dots, X_d]$ of linear polynomials 
 as above with the additional property that their non-constant parts form a system of 
 pairwise linearly independent forms.
 \item (\emph{Correlation condition}).
 The family of functions $(\nu^{(\tau)})_{\tau \in \mathcal{T}}$ satisfies the $D$-correlation condition if,
 for every constant $B \geq 0$, for every sufficiently large $N_\tau$, 
 all integers $T \in [N_{\tau}(\log N_\tau)^{-B}, N_{\tau}]$ and primes $T'$ as in (3),
 and for every $1 < m \leq D$, there exists a function $\varpi_m : \ZZ/T' \ZZ \to \RR_{>0}$ such that
 for all $q \in \NN$ and all $m$, $N_{\tau}$, $T$ and $T'$ as before the moment condition
 $$ \frac{1}{T'} \sum_{n \in \ZZ/ T'\ZZ} (\varpi_m(n))^q \ll_{m,q,B} 1$$
 holds, and such that 
 $$ \frac{1}{T'} \sum_{n \in \ZZ/ T'\ZZ} \prod_{1\leq j\leq m} {\tilde \nu}_{\tau,T}(n+a_j)
 \ll_{m,q,B} \sum_{1\leq i <j \leq m} \varpi_m(a_i - a_j)$$
 for all tuples $(a_1, \dots, a_m) \in (\ZZ/ T'\ZZ)^m$
 and for the function ${\tilde \nu}_{\tau,T}$  defined in (3).
 \end{enumerate}
\end{definition}

Our aim is to prove the following result:

\begin{theorem} \label{t:pseudorandom}
 Let $D \geq 1$ be an integer, let $\gamma \in (0,1/2)$ and let $N>1$ be an integer parameter.
 Further, let $h_1, \dots, h_r \in \mathcal{F}^*$ and let $B>1$ be a constant.
 Suppose $W_1, \dots, W_r \in (1, (\log N)^{B})$ are integers, each divisible by $W(N)$ and 
 for every $i \in \{1, \dots, r\}$, let $A_i$ be a reduced residue modulo $W_i$. 
 Writing $\tau = (N, W_1, \dots, W_r, A_1, \dots, A_r)$, we define the $W$-tricked function
 $f_i^{(\tau)}: n \mapsto h_i(W_i n + A_i)$ together with the weight
 $E_{f_i}^{(\tau)} = E_{h_i}(N;W_i)$, where 
 \begin{equation*} 
 E_{h_i}(N;Q) := \frac{1}{\log N} \frac{Q}{\phi( Q )}
 \prod_{p \leq N, p \nmid Q} \left(1 + \frac{|h_i(p)|}{p} \right).
 \end{equation*}
 For each $1 \leq i \leq r$ and $N$, let $\nu_{h_i}^{(N)} = \nu_{\sharp}^{(N)} \nu_{\flat}^{(N)}$ denote the 
 majorant function for $h_i$ with parameter $\gamma$, as constructed in the previous three sections.\footnote{See, 
 in particular, \eqref{eq:nu_sharp} and \eqref{eq:nu_flat}.} 
 Let $W=\lcm(W_1, \dots, W_r)$ and set $N_\tau = \lfloor N/ W \rfloor$.
 Then the family of functions $\nu^{(\tau)} : \{1, \dots, N_\tau \} \to \RR_{> 0}$ defined by
 $$\nu^{(\tau)}(n) = 
 \frac{1}{r}
 \sum_{i=1}^r
 \frac{\nu_{h_i}^{(N)}(W_in+A_i)}{E_{f_i}^{(\tau)}}$$
 is a family of $D$-pseudorandom majorants for $f_1^{(\tau)}, \dots, f_r^{(\tau)}$ with weights 
 $E_{f_1}^{(\tau)}, \dots, E_{f_r}^{(\tau)}$,
 provided $\gamma$ was chosen sufficiently small with respect to $D$.
\end{theorem}

Since the majorisation condition holds for our majorant by construction, 
the proof of Theorem \ref{t:pseudorandom} reduces to establishing 
Propositions \ref{p:average-order}, \ref{p:linear-forms} and Proposition \ref{p:correlation} below.
To see this, one splits the majorant ${\tilde \nu}^{(T,N)}$ from Definition \ref{def:majorant-family} (3)
into its individual parts and proceeds as described in the subsection 
`Construction of the enveloping sieve' of \cite[Appendix D]{GT-linearprimes}. 

\begin{proposition}[Correct average order] \label{p:average-order}
Let $N >1$, $B>1$, 
$T\in[N(\log N)^{-B},N]$, and $W \in (1, (\log N)^B)$ with $W(N)|W$ be integers.
Suppose that $h \in \mathcal{F}^*$ and let $\nu_{h}^{(N)} = \nu_{\sharp}^{(N)} \nu_{\flat}^{(N)}$ 
denote the majorant function for $h$ with parameter $\gamma$, defined via 
\eqref{eq:nu_sharp} and \eqref{eq:nu_flat}.
Then
\begin{align} \label{eq:correct-average-order}
|S_h(T;W,A)|
\ll_B S_{\nu^{(N)}_h}(T;W,A)
\ll_B 
E_h(T;W)
\end{align}
whenever $\gcd(A,W)=1$.
\end{proposition}

\begin{proposition}[Linear forms estimate]\label{p:linear-forms}
Let $N>1$ be an integer parameter, let $B > 1$ be a constant and supposed that $T \in [N(\log N)^{-B},N]$.
For each $1 \leq i \leq r$, 
let $1 \leq W_i \leq (\log N)^B$ be an integer that is divisible by 
$W(N)$ and let $0 \leq A_i \leq W_i$ be coprime to $W(N)$.
Let $D>1$ be constant, let $1\leq r, s \leq D$ be integers and let   
$\phi_1, \dots, \phi_r \in \ZZ[X_1, \dots X_s]$ be linear polynomials
whose non-constant parts are pairwise linearly independent and 
which have the property that $|\phi_1(\0)|, \dots, |\phi_r(\0)| \leq DT$ 
and that all other coefficients are bounded by $D$ in absolute value.
Suppose that $\fK \subset [-1,1]^s$ is a convex body with 
$\vol(\fK) > 0$ and that
$W_i\phi_i(T\fK)+A_i \subset [1,W_iT]$ for each $1\leq i \leq r$.
Then, as $N \to \infty$, 
the following estimate holds uniformly in $T$, $W_1, \dots, W_r$, and $\phi_1, \dots, \phi_r$ as above,
provided the parameter $\gamma$ defining $\nu_{h_1}^{(N)}, \dots, \nu_{h_r}^{(N)}$ is sufficiently small with respect to $D$:
$$
\frac{1}{T^s\vol \fK}
\sum_{\n \in \ZZ^s \cap T\fK} \prod_{i=1}^r 
\nu_{h_i}^{(N)}(W_i\phi_i(\n)+A_i)
= 
\left(1 + o(1)\right)
\prod_{i=1}^r
\left( \frac{1}{T} \sum_{n \leq T} \nu_{h_i}^{(N)}(W_in+A_i)\right).
$$
\end{proposition}

\begin{proposition}[Correlation estimate] \label{p:correlation}
Let $B,D>1$ be integers, let $N > 1$ an integer parameter,
let $\nu_{h_1}^{(N)}, \dots, \nu_{h_r}^{(N)}$ 
and $\tau = (N, W_1, \dots, W_r, A_1, \dots, A_r)$ be as in Theorem \ref{t:pseudorandom},
and write $W = \lcm(W_1, \dots,W_r)$.
Then there exists for any $1 \leq d \leq D$ a family of functions 
$\sigma_{\tau,d}: \{-\lfloor N/W \rfloor,\dots, \lfloor N/W \rfloor\} \to \RR_{\geq 0}$
which satisfies the moment bounds
$$
\frac{W}{T}\sum_{-T/W \leq m \leq T/W} \sigma^q_{\tau,d}(m) \ll_{d,q,B} 1, \qquad (q \in \NN),
$$
uniformly for all $\tau$ as in Theorem \ref{t:pseudorandom} and all $T\in [N(\log N)^{-B},N]$, 
and which is such that for all $T\in [N(\log N)^{-B},N]$,
for every $d$-tuple $(i_1, \dots, i_{d}) \in \{1,\dots,r\}^{d}$ 
and every choice of (not necessarily distinct) $a_1,\dots,a_{d} \in \{1, \dots, \lfloor T/W \rfloor\}$, we have
\begin{align} \label{eq:correlation-est}
 \frac{W}{T}
 \sum_{1 \leq m \leq T/W} \prod_{j=1}^d 
 \frac{\nu_{h_{i_j}}^{(N)}(W_{i_j}(m+a_j) + A_{i_j}) }{E_{h_{i_j}}(N;W_{i_j})}
 \leq  \sum_{1 \leq j < j' \leq d} \sigma_{\tau,d}(a_j - a_{j'}),
\end{align}
provided the parameter $\gamma$ defining $\nu_{h_1}^{(N)}, \dots, \nu_{h_r}^{(N)}$ is small enough
with respect to $D$.
\end{proposition}

\begin{proof}[Proof of Proposition \ref{p:linear-forms}]
For each $1 \leq i \leq r$, write $\tilde \phi_i (\n) = W_i\phi_i(\n)+A_i$.
Inserting all definitions and combining the two parts of the majorant $\nu^{\flat}_{h_j}$
into one by extending the summation in 
$Q_j$ to include $Q_j=1$, we obtain
\begin{align}\label{eq:linear}
&\frac{1}{T^s\vol \fK}
 \sum_{\n \in \ZZ^s \cap T\fK} \prod_{j=1}^s \nu_{h_j}(\tilde \phi_j(\n)) \\
\nonumber
 & =\frac{1}{T^s\vol \fK} \sum_{\n \in \ZZ^s \cap T\fK} \prod_{j=1}^s 
   \nu^{\sharp}_{h_j}(\tilde \phi_j(\n))
   \nu^{\flat}_{h_j}(\tilde \phi_j(\n)) \\
 \nonumber  
 &=  \frac{1}{T^s\vol \fK}
 \sum_{\n \in \ZZ^s \cap T\fK} \prod_{j=1}^r \Bigg\{
 \sum_{\kappa_j = 4/\gamma}^{[(\log \log N)^3]}
 \sum_{\lambda_j 
        = \lceil \log_2 \kappa_j - 2\rceil}^{[\log_2((\log \log N)^3)]}
 \sum_{\substack{u_j \in \<\mathcal{P}_{\sharp}^{(j)}\>  \\ u_j \in U_j(\lambda_j ,\kappa_j)}}
 \sum_{\substack{d_j \in \<\mathcal{P}_{\sharp}^{(j)}\>  \\ \gcd(d_j,u_jW_j)=1}}\\
 \nonumber
& \quad \qquad \times
  \sum_{\substack{ Q_j \in \mQ^{(j)} \cup \{1\}: \\ Q_j = 1 \text{ or} \\ Q_j > N^{\gamma/(\log \log N)^3}}}
 \sum_{\substack{ m_j \in \<\mQ^{(j)}\>:
                 \\ \gcd(m_j,W_j) = 1, \\ (Q_j>1 \wedge p|m_j) \Rightarrow  p<Q_j }}
 \sum_{\substack{\delta_j, \delta'_j \in \<\mQ^{(j)}\> 
 \\ \gcd(\delta_j \delta'_j,W_j)=1}}
  ~~\1_{\lcm(\delta_jm_jQ_j,\delta'_jm_jQ_j,u_j, d_j)|\tilde \phi_j(\n)} 
\\
\nonumber
& \qquad \qquad \times H^{\kappa_j} h^{\sharp}_j(u_j)
 g_j^{\sharp}(d_j) \mu(\delta_j) \mu(\delta'_j)
 h^{\flat}_j(m_j) h^{\flat}_j(Q_{j}) \\
 \nonumber
& \qquad \qquad  \times \lambda\left(\frac{\log Q_jm_j}{\log N^{\gamma}}\right)
 \chi\left(\frac{\log d_j}{\log N^{\gamma}}\right)
 \chi\left(\frac{\log \delta_j}{\log Q_j^* }\right)
 \chi\left(\frac{\log \delta'_j}{\log Q_j^*}\right) \Bigg\},
\end{align} 
where $Q_j^* := Q_j (N^{\gamma})^{\1_{Q_1=1}}$ takes the value $Q_j$ unless $Q_j=1$ and thus $Q_j^*=N^{\gamma}$.

Our aim is to show that in the final expression above the summation in $\n$ and the product can be interchanged
at the expense of a small error, i.e.\ that \eqref{eq:linear} equals
\begin{align} \label{eq:linear-1}
 &(1+o(1))
 \sum_{\boldsymbol{\kappa},\boldsymbol{\lambda} ,\u, \Q}
 \sum_{~\m, \boldsymbol\delta,\boldsymbol\delta', \bd~}
 \prod_{j=1}^r
 \frac{1}{T^s\vol \fK}
 \sum_{\n_j \in \ZZ^s \cap T\fK} \Bigg\{
 ~~\1_{\Delta_j|\tilde \phi_j(\n_j)}  H^{\kappa_j} h^{\sharp}(u_j)
 g_j^{\sharp}(d_j)  \\
\nonumber 
& \qquad \times 
 h^{\flat}(Q_{j}) 
 h^{\flat}(m_j) 
 \mu(\delta_j) 
 \mu(\delta'_j)
 \lambda\left(\frac{\log Q_jm_j}{ \log N^{\gamma}}\right)
 \chi\left(\frac{\log d_j}{\log N^{\gamma}}\right)
 \chi\left(\frac{\log \delta_j}{\log Q_j^*}\right)
 \chi\left(\frac{\log \delta'_j}{\log Q_j^*}\right) \Bigg\},
\end{align}
where
$$
\Delta_j= \lcm(\delta_j m_jQ_j,\delta'_j m_jQ_j, u_j, d_j) = d_ju_jQ_jm_j \lcm(\delta_j,\delta'_j),
$$
and where $\boldsymbol \kappa$, $\boldsymbol \lambda$, $\u$, $\Q$, $\m$, $\boldsymbol{\delta}$, $\boldsymbol{\delta'}$ 
and $\bd$ run over $r$-tuples of integers that satisfy component-wise the summation conditions imposed in \eqref{eq:linear}.

The proof that `\eqref{eq:linear} = \eqref{eq:linear-1}' 
follows the approach of \cite[\S9]{bm} closely, but the situation here bears a few extra difficulties. 
All essential tools in this analysis were derived or developed starting out from 
material in \cite[Appendix D]{GT-linearprimes}. 

Multiplying out the product in \eqref{eq:linear}, collecting together $r$-tuples and slightly rearranging them, 
the first step in our proof, as in \cite[\S9]{bm}, is to show that the summation over $r$-tuples
$\boldsymbol \kappa$, $\boldsymbol \lambda$, $\u$, $\Q$, $\m$, $\boldsymbol{\delta}$, $\boldsymbol{\delta'}$ and $\bd$
can be restricted to such tuples that satisfy the additional conditions that
for every $1\leq i \leq r$ we have
$\gcd(Q_i m_i \delta_i \delta'_i d_i, \prod_{j=1}^r u_j)=1$ and 
$\gcd(m_i \delta_i \delta'_i d_i, \prod_{j=1}^r Q_j )=1$,
and that all tuples $(u_1 \dots, u_r)$ consist of pairwise co-prime integers and 
all prime-tuples $(Q_1, \dots, Q_r)$ have pairwise distinct entries.

To prove this, observe first that all the above coprimality conditions are automatically satisfied for the factors with 
index $j=i$.
Thus, if any choice of tuples 
$\boldsymbol \kappa$, $\boldsymbol \lambda$, $\u$, $\Q$, $\m$, $\boldsymbol{\delta}$, $\boldsymbol{\delta'}$ and $\bd$ 
violates the above additional conditions, then at least one of the following three alternatives holds.
There is a prime factor $p|u_j$ for some $1 \leq j \leq r$ which divides two entries of $(\Delta_1, \dots, \Delta_r)$, or, 
for some $1 \leq j \leq r$, the prime $Q_j$ either divides two entries of $(\Delta_1, \dots, \Delta_r)$ or we have 
$Q_j^2|\Delta_i$ for some $1 \leq i \leq r$.
Recall that all prime factors $p$ of any $u_j \in U(\kappa_j,\lambda_j)$ satisfy the lower bound 
$p\geq N^{(\log \log N)^{-3}}$, and that also $Q_j > N^{(\log \log N)^{-3}}$.

We seek to apply the Cauchy--Schwarz inequality to show that the contribution of the excluded choices $r$-tuples
makes a negligible contribution.
If $\mathcal{T}$ denotes the set of all integers that are divisible by the square of a prime from the interval 
$[N^{(\log \log N)^{-3}} , N^{\gamma})$, and if $\1_{\mathcal{T}}$ denotes the corresponding characteristic function,  then
\begin{align*}
\sum_{\n \in \ZZ^s \cap T\fK}
\1_{\mathcal{T}}\Big(\prod_{1\leq i \leq r} \tilde\phi_i(\n)\Big) 
&=
\sum_{\substack{T^{(\log \log T)^{-3}} \\ \leq p < T^{\gamma}}} 
\sum_{\n \in \ZZ^s \cap T\fK}
\1_{p^2|\prod_i \tilde\phi_i(\n)} \\
&\ll_r 
\sum_{\substack{N^{(\log \log N)^{-3}} \\ \leq p < N^{\gamma}}} 
\frac{|\ZZ^s \cap T\fK|}{p^2}
\ll_r |\ZZ^s \cap T\fK| \exp\left(-\frac{\log N}{(\log \log N)^{3}}\right) .
\end{align*}
Next, we seek a bound on on the second moment of \eqref{eq:linear} with respect to the summation in $\n$.
Note that $H^{\kappa_j} \leq  H^{(\log \log N)^3}$, that 
$|\mu(\delta_j) \mu(\delta'_j) h^{\flat}(m_jQ_{j})| \leq 1$, and that $|\chi|, |\lambda| \leq 1$.
Further, 
$$0 \leq g^{\sharp}_j(d) < h^{\sharp}_j(d) \leq h^{\sharp}_j(\phi_j(\n))$$
for all divisors $d| \phi_j(\n)$.
Finally,
\begin{equation} \label{eq:d_7-bound}
 \sum_{\boldsymbol{\kappa},\boldsymbol{\lambda} ,\u, \Q}
 \sum_{~\m, \boldsymbol\delta,\boldsymbol\delta', \bd~}
  \prod_{j=1}^r \1_{\lcm(\delta_jm_jQ_j,\delta'_jm_jQ_j,u_j, d_j)|\tilde \phi_j(\n)} 
\leq \prod_{j=1}^r d_7(\tilde \phi_j(\n)),
\end{equation}
since $\Delta_j=\lcm(\delta_jm_jQ_j,\delta'_jm_jQ_j,u_j, d_j)|\tilde \phi_j(\n)$ implies the decomposition of
$\phi_j(\n)$ into seven factors as follows:
$$\phi_j(\n) = \gcd(\delta_j,\delta'_j) \frac{\delta_j}{\gcd(\delta_j,\delta'_j)}
 \frac{\delta'_j}{\gcd(\delta_j,\delta'_j)}  (Q_j m_j) u_j  d_j  \frac{\phi_j(\n)}{\Delta_j}, $$
where $Q_j m_j$ is regarded as one factor since $Q_j = P^+(Q_j m_j)$ is uniquely determined.
Thus,
\begin{align} \label{eq:p-linear-forms-2nd-moment}
\nonumber
&\sum_{\n \in \ZZ^s \cap T\fK} \bigg(
 \sum_{\boldsymbol{\kappa},\boldsymbol{\lambda} ,\u, \Q}
 \sum_{~\m, \boldsymbol\delta,\boldsymbol\delta', \bd~}
  \prod_{j=1}^r \1_{\Delta_j|\tilde \phi_j(\n)} 
  H^{\kappa_j} h^{\sharp}_j(u_j)
 g_j^{\sharp}(d_j)
\bigg)^2 \\
\nonumber
&\leq H^{2r(\log \log N)^3} 
\sum_{\n \in \ZZ^s \cap T\fK} 
\bigg( \prod_{j=1}^r
d_7(\tilde \phi_j(\n))h^{\sharp}_j(\tilde \phi_j(\n))g^{\sharp}_j(\tilde \phi_j(\n)) \bigg)^2\\
&\ll H^{2r(\log \log N)^3} |\ZZ^s \cap T\fK| (\log N)^{O_{H,r}(1)},
\end{align}
where the last step follows from the $k$-th moment bound \cite[Proposition 7.9]{bm}, applied to the function
$f(n) = d_7(n)(h^{\sharp}_j(n))^2$.
Indeed, the function $f$ satisfies the conditions of \cite[Proposition 7.9]{bm}. 
The proof of the proposition is easily adjusted to apply to a system of forms 
$(W_i \phi_i(\m) + A_i)_{1 \leq i \leq r}$, where $(\phi_i(\m))_{1 \leq i \leq r}$ has the same properties as 
the system $(\psi_i(\m))_{1 \leq i \leq r}$ in the statement.
(The $W$-trick would in fact allow us to remove the assumption that $|f(n)| \ll_{\eps} n^{\eps}$ from the statement.)

Putting everything together, the contribution from the excluded tuples is bounded above by
\begin{align*}
 &\frac{1}{|\ZZ^s \cap T\fK|}
 \sum_{\n \in \ZZ^s \cap T\fK} 
 \1_{\mathcal{T}}\Big(\tilde\phi_1(\n) \dots \tilde\phi_r(\n)\Big) 
 \bigg(
 \sum_{\boldsymbol{\kappa},\boldsymbol{\lambda} ,\u, \Q}
 \sum_{~\m, \boldsymbol\delta,\boldsymbol\delta', \bd~}
  \prod_{j=1}^r \1_{\Delta_j|\tilde \phi_j(\n)} 
  H^{\kappa_j} h^{\sharp}_j(u_j)
 g_j^{\sharp}(d_j) \bigg) \\
&\ll  H^{r(\log \log N)^3}  (\log N)^{O_{H,r}(1)}
\exp\left(-\frac{\log N}{2(\log \log N)^{3}}\right)
\ll_{H,r} \exp\left(-\frac{\log N}{3(\log \log N)^{3}}\right).
\end{align*}

(2) Similarly as above, one can also show that the summations in \eqref{eq:linear-1} can be restricted to tuples
such that for every $1\leq i \leq r$ we have
$\gcd(Q_i m_i \delta_i \delta'_i d_i, \prod_{j=1}^r u_j)=1$ and 
$\gcd(m_i \delta_i \delta'_i d_i, \prod_{j=1}^r Q_j )=1$,
and that all tuples $(u_1 \dots, u_r)$ consist of pairwise co-prime integers and 
all prime-tuples $(Q_1, \dots, Q_r)$ have pairwise distinct entries.
To see this, we bound
\begin{align*}
 &\sum_{\substack{\n_1, \dots, \n_r \\ \in \ZZ^s \cap T\fK}} 
 \1_{\mathcal{T}}\Big(\prod_{1\leq i \leq r} \tilde\phi_i(\n_i)\Big) 
 \bigg(
 \sum_{\boldsymbol{\kappa},\boldsymbol{\lambda} ,\u, \Q}
 \sum_{~\m, \boldsymbol\delta,\boldsymbol\delta', \bd~}
  \prod_{j=1}^r \1_{\Delta_j|\tilde \phi_j(\n_j)} 
  H^{\kappa_j} h^{\sharp}_j(u_j)
 g_j^{\sharp}(d_j) \bigg),
\end{align*}
in essentially the same way as before.

(3)
For any system of linear forms $\vphi_1, \dots, \vphi_r \in \ZZ[x_1, \dots, x_s]$ and for any prime $p$ let
\begin{equation} \label{eq:def-alpha}
 \alpha_{\boldsymbol{\vphi}}(p^{c_1}, \dots, p^{c_r}) 
= \frac{1}{p^{ms}} \sum_{\n \in (\ZZ/p^m\ZZ)^s} \prod_{i=1}^r 
\1_{p^{c_i}|\vphi_i(\n)},
\end{equation}
where $m = \max(c_1, \dots, c_r)$, and extend 
$\alpha_{\boldsymbol{\vphi}}$ to composite arguments multiplicatively 
(cf.\ \cite[p.1831]{GT-linearprimes} and \cite[Definition 8.4]{lmr}).

Suppose that for each $i \in \{1, \dots, r\}$, we are given coprime integers $W_i$ and $A_i$ and let 
$\tilde \vphi_i(\m) = W_i \vphi_i(\m) +A_i$.
If $n(\mathbf c)$ denote the number of non-zero components of 
$\mathbf c$ and if $\bphi$ has finite complexity in the sense of 
\cite{GT-linearprimes}, i.e., if the linear forms $\vphi_i$ 
are pairwise linearly independent, then the following asymptotic formulae hold
(cf. \cite[eqn (5.6)]{bm}):
\begin{equation}\label{eq:ev-alpha-1}
\alpha_{\boldsymbol{\tilde \vphi}}(p^{c_1},\dots,p^{c_r}) \begin{cases}
=1, &\mbox{if $n(\mathbf c)=0$,}\\
=0, &\mbox{if  $\exists i: c_i>0$ and $p | W_i$,}\\
= p^{-\max_i \{c_i\}}, &  \mbox{if $p\gg_L 1$, $(c_i>0 \Rightarrow p \nmid W_i)$ and $n(\mathbf c)=1$,}\\
\leq p^{-\max_{i\neq j}\{c_i+c_j\}},
      & \mbox{if $p\gg_L 1$ and $n(\mathbf c)>1$,}\\
\ll_{L} p^{-\max_i \{c_i\}}, &
 \mbox{otherwise,}
\end{cases}
\end{equation}
where $$L = \max_{1 \leq i \leq r} \{\|\vphi_i\|,r,s\}$$
and where $\|\vphi_i\|$ denotes the maximum modulus of the coefficients of
$\vphi_i$.
Writing, as before,
$$
\Delta_j= \lcm(\delta_j m_jQ_j,\delta'_j m_jQ_j, u_j, d_j),
$$
it follows by means of a lattice point counting argument (cf.\ \cite[Appendix A]{GT-linearprimes}) that
\begin{equation} \label{eq:lattice-point-count}
\frac{1}{x^s\vol \fK}\sum_{\n \in \ZZ^s \cap x\fK} \prod_{j=1}^r 
\1_{\Delta_j|\tilde \phi_j(\n)}
= \alpha_{\boldsymbol{\tilde \phi}}(\Delta_1, \dots, \Delta_r) 
\left(1 + O\left(\frac{x^{-1+ O_D(\gamma)}}{\vol \fK}\right)\right). 
\end{equation}
Thus, the expression \eqref{eq:linear} equals
\begin{align}\label{eq:linear-2}
\begin{split}
 \left(1 + O\left(\frac{T^{-1+O_D(\gamma)}}{\vol \fK}\right)\right) 
 &\times \\
 \dsum_{\boldsymbol{\kappa},\boldsymbol{\lambda} ,\u, \Q}
 \dsum_{~\m, \boldsymbol\delta,\boldsymbol\delta', \bd~}
 &\alpha_{\boldsymbol{\tilde \phi}}(\Delta_1, \dots, \Delta_r)
  \prod_{j=1}^r  
  H^{\kappa_j} h^{\sharp}_j(u_j)
 g_j^{\sharp}(d_j) \mu(\delta_j) \mu(\delta'_j)
 h^{\flat}_j(m_jQ_{j}) \\
& \times 
 \lambda\left(\frac{\log Q_jm_j}{\log N^{\gamma}}\right)
 \chi\left(\frac{\log d_j}{\log N^{\gamma}}\right)
 \chi\left(\frac{\log \delta_j}{\log Q_j^*}\right)
 \chi\left(\frac{\log \delta'_j}{\log Q_j^*}\right)
\end{split}
\end{align}
with the same summation conditions on all components as in \eqref{eq:linear} and 
where $\dsum$ indicates that the additional co-primality conditions from (1) also are in place.
Due to the additional co-primality conditions from (1), the argument of \eqref{eq:linear-2} equals
$$
\alpha_{\boldsymbol{\tilde\phi}}(\Delta_1, \dots, \Delta_r)
= \frac{\alpha_{\boldsymbol{\tilde\phi}}
(\tilde \Delta_1, \dots, \tilde\Delta_r)}{u_1 \dots u_r Q_1 \dots Q_r},$$
where $\tilde \Delta_i = \Delta_i/(Q_iu_i)$ for each $i$.

Applying the asymptotic evaluation \eqref{eq:ev-alpha-1} of $\alpha_{\boldsymbol{\tilde \phi}}$ 
and \eqref{eq:lattice-point-count} to the system of length $r=1$ in \eqref{eq:linear-1} and
taking into account step (2), our original aim translates as follows:
our task is to show that, although $\alpha_{\boldsymbol{\tilde \phi}}$ is not multiplicative
across its components if $r>1$, the expression \eqref{eq:linear-2} equals
\begin{align}\label{eq:linear-3}
\nonumber
& 
 \left(1 + o(1) + O\left(\frac{T^{-1+O_D(\gamma)}}{\vol \fK}\right)\right)
 \dsum_{\boldsymbol{\kappa},\boldsymbol{\lambda} ,\u, \Q}
 \dsum_{~\m, \boldsymbol\delta,\boldsymbol\delta', \bd~}
 \prod_{j=1}^r  
 \frac{  H^{\kappa_j} h^{\sharp}_j(u_j)
 g_j^{\sharp}(d_j) \mu(\delta_j) \mu(\delta'_j)
 h^{\flat}_j(m_jQ_{j})}{\Delta_j}
\\
& \quad \times
 \lambda\left(\frac{\log Q_jm_j}{\log N^{\gamma}}\right)
 \chi\left(\frac{\log d_j}{\log N^{\gamma}}\right)
 \chi\left(\frac{\log \delta_j}{\log Q_j^*}\right)
 \chi\left(\frac{\log \delta'_j}{\log Q_j^*}\right),
\end{align}
again with the summation conditions from \eqref{eq:linear} and the coprimality conditions from step (2) in place.
Note that \eqref{eq:linear-3} no longer features the linear polynomials $\phi_i$, 
a fact that can only be achieved because we are working with a $W$-trick.

(4) 
Following \cite[App.\ D]{GT-linearprimes},
we essentially replace $\chi(\log m / \log Q)$ and $\lambda(\log m / \log Q))$ by
multiplicative functions in $m$, using the Fourier-type transforms 
$$
e^x \chi(x) = \int_{\RR} \theta(\xi) e^{-ix\xi}d\xi, \quad
e^x \lambda(x) = \int_{\RR} \theta'(\xi) e^{-ix\xi}d\xi,
$$
which define rapidly decaying functions $\theta, \theta': \RR \to \RR$.
Since $\chi$ and $\lambda$ have compact support, Fourier inversion and integration by parts shows that
\begin{equation}\label{eq:decay}
\theta(\xi), \theta'(\xi) \ll_E (1+|\xi|)^{-E} 
\end{equation}
for all $E>0$. 
Thus, (cf.\ \cite[paragraph before (D.6)]{GT-linearprimes}), 
setting $I = [-(\log N^{\gamma})^{1/2},(\log N^{\gamma})^{1/2}]$, one obtains
\begin{equation} \label{eq:chi-trans}
\chi\left( \frac{\log m}{\log Q}\right)
= \int_I m^{-\frac{1+i\xi}{\log Q}} \theta(\xi) d\xi 
+ O_E\left( m^{-\frac{1}{\log Q}} (\log N^{\gamma})^{-E} \right)
\end{equation}
and 
\begin{equation} \label{eq:lambda-trans}
\lambda\left( \frac{\log m}{\log Q}\right)
= \int_I m^{-\frac{1+i\xi}{\log Q}} \theta'(\xi) d\xi 
+ O_E\left( m^{-\frac{1}{\log Q}} (\log N^{\gamma})^{-E} \right).
\end{equation}
Inserting these new expressions for $\chi$ and $\lambda$ at all instances in 
\eqref{eq:linear-2} (resp.\ \eqref{eq:linear-3}) and multiplying out, we obtain a 
main term and an error term.
Any integral occurring in the main term runs over the compact interval $I$ and,
thanks to the factor $\alpha_{\boldsymbol\phi}(\Delta_1, \dots, \Delta_r)$
(resp. $\prod_j \Delta_j^{-1}$),
the summation over 
$\boldsymbol{\kappa}, 
\boldsymbol{\lambda},
\u,
\Q,\m, 
\boldsymbol\delta,
\boldsymbol\delta',
\bd$ 
is absolutely convergent.
Thus, in the main term of \eqref{eq:linear-2} (resp.\ \eqref{eq:linear-3}) we can swap sums and integrals. 
Taking into account steps (1)--(3), as well as the convergence of \eqref{eq:c_0}, 
the expression \eqref{eq:linear-2} then becomes
\begin{align}\label{eq:linear-4}
\nonumber
 & (1+o(1)) \sum_{\boldsymbol\kappa} \sum_{\boldsymbol\lambda} \dsum_{\u}
\prod_{j=1}^r
\frac{H^{\kappa_j} h^{\sharp}_j(u_j)}{u_j}\\
\nonumber
 & \times
 \int_I \dots \int_I
 \dsum_{\Q} 
 \dsum_{\m,\boldsymbol\delta,\boldsymbol\delta',\bd}
 \frac{ \alpha_{\boldsymbol{\tilde \phi}}(\tilde\Delta_1, \dots, \tilde\Delta_r)}
 {Q_1,\dots,Q_r}
 \left(\prod_{j=1}^r J_j^*~
 \theta'(\xi_{3,j})
 \theta(\xi_{4,j})
 \theta(\xi_{5,j})
 \theta(\xi_{6,j})\right)
 \bd \boldsymbol{\xi}\\
 &+ O_E\left(\frac{1}{(\log N)^E}
  \sum_{\Q, \m, \boldsymbol\delta,\boldsymbol\delta',\bd}
  \left(\prod_{i=1}^r 
  \frac{H^{\Omega(d_i)}}
  {(Q_i m_i d_i \delta_i \delta'_i)^{1/\log T^{\gamma}}}\right)
  \frac{ \alpha_{\boldsymbol{\tilde \phi}}(\tilde\Delta_1, \dots, \tilde\Delta_r)}
 {Q_1,\dots,Q_r}
 \right),
\end{align}
where
\begin{align} \label{eq:J_j-1}
 J_j^* = h^{\flat}(Q_j) Q_j^{-\frac{1+i\xi_3}{\gamma\log N}} J_j
\end{align}
and
\begin{align} \label{eq:J_j-2}
 J_j=
 g_j^{\sharp}(d_j)
 h^{\flat}_j(m_j)
 m_j^{-\frac{1+i\xi_3}{\gamma\log N}}
 d_j^{-\frac{1+i\xi_4}{\gamma\log N}}
 \mu(\delta_j) \mu(\delta'_j)
 \delta_j^{-\frac{1+i\xi_5}{\log Q_j^*}}
 {\delta'}_j^{-\frac{1+i\xi_6}{\log Q_j^*}}
\end{align}
with $Q_j^*= Q_j(N^{\gamma})^{\1_{Q_j=1}}$, as before.

Similarly, the expression \eqref{eq:linear-3} takes the same form as \eqref{eq:linear-4} but with
$\alpha_{\boldsymbol\phi}(\tilde\Delta_1, \dots, \tilde\Delta_r)$ replaced by
$(\tilde\Delta_1 \dots \tilde\Delta_r)^{-1}$
in both instances.
To proceed further, the following two lemmas are required
\begin{lemma} \label{lem:lin-1}
 Suppose $\boldsymbol{\kappa}, \boldsymbol{\lambda}$, 
 $u_i\in U(\kappa_i,\lambda_i)$ and $Q_i \geq N^{(\log \log N)^{-3}}$ for $1\leq i \leq r$
 are fixed and let $M \geq 1$ be any given integer, e.g.\ $M=\prod_{1 \leq j \leq r} Q_ju_j$.
 Let $J_j$ for $1 \leq j \leq r$ be the quantity defined in \eqref{eq:J_j-2}.
 Then, uniformly in $M$,
\begin{align*}
 \dsum_{\m, \boldsymbol\delta,\boldsymbol\delta',\bd}
 \alpha_{\boldsymbol{\tilde\phi}}(\tilde\Delta_1, \dots, \tilde\Delta_r)
 \left(\prod_{j=1}^r J_j\right)
 = (1+o(1)) \prod_{j=1}^r
 \dsum_{m_j, \delta_j,\delta'_j,d_j}
 \frac{J_j}{\tilde\Delta_j},
\end{align*}
 where $o(1)$ is uniform in all parameters and where $\sum'$ indicates that all components are co-prime to $M$.
\end{lemma}

\begin{lemma} \label{lem:lin-2}
Suppose $\boldsymbol \kappa$, $\boldsymbol \lambda$ and $\u$ with $u_i \in U(\kappa_i, \lambda_i)$, $1 \leq i \leq r$
are fixed, and suppose that $M \geq 1$ is any given integer, e.g.\  $M =\prod_{1 \leq i \leq r} u_i$.
Let $J_j^*$ for $1 \leq j \leq r$ be the quantity defined in \eqref{eq:J_j-1}.
Then we have
 \begin{align} \label{eq:lin-2}
& \int_I \dots \int_I
 \prod_{j=1}^r 
 \left|  
 \dagsum_{\substack{Q_j,m_j,\delta_j,\delta'_j,d_j  }}
 \frac{J_j^*}{Q_j \tilde\Delta_j}
 \theta'(\xi_{3,j})
 \theta(\xi_{4,j})
 \theta(\xi_{5,j})
 \theta(\xi_{6,j})
 \right|
 \bd \boldsymbol{\xi}_j \\ \nonumber
 &\ll  \Big(\frac{M}{\phi(M)}\Big)^r \prod_{j=1}^r \frac{W_j}{\phi(W_j)}
  \frac{1}{\log N}
 \prod_{\substack{p < N^{\gamma} \\ p \nmid W_jM}} 
 \left(1 +\frac{|h_j(p)|}{p} \right), 
 \end{align}
 where $Q_j \in \mQ^{(j)}$ with $Q_j>N^{\gamma/(\log \log N)^3}$,
 where $m_j,\delta_j,\delta'_j \in \<\mQ^{(j)} \>$, where $d_j \in \< \mathcal{P}_{\sharp}^{(j)}\>$,
 and where the $\sum^{\dagger}$ indicates that the summation is restricted to
 integers co-prime to $W_jM$.
\end{lemma}
The two lemmas above correspond to \cite[Lemmas 9.4 and 9.3]{bm}.
The proof of Lemma~\ref{lem:lin-1} is very similar to that of 
\cite[Lemmas 9.4]{bm} rather fairly lengthy so that we omit it here.
The proof of Lemma \ref{lem:lin-2}, in contrast, needs to work with significantly 
weaker assumptions on the Dirichlet series involved than the corresponding one 
from \cite{bm}.
We will therefore carry out this proof below.
Before we do so, let us, however, show how to deduce the equality of 
\eqref{eq:linear-2} and \eqref{eq:linear-3} and, thus, complete 
the proof of Proposition \ref{p:linear-forms}.

By applying first Lemma \ref{lem:lin-1} with $M = \prod_{1\leq j \leq r} u_jQ_j$ 
and then Lemma \ref{lem:lin-2} with $M' = 1$, we obtain 
\begin{align*}
&\sum_{\boldsymbol\kappa} \sum_{\boldsymbol\lambda} \dsum_{\u}
\prod_{i=1}^r
\frac{H^{\kappa_i} h^{\sharp}_i(u_i)}{u_i}\\
&\int_I \dots \int_I
 \dsum_{\Q}
 \dsum_{\m,\boldsymbol\delta,\boldsymbol\delta',\bd}
 \frac{J_1^* \dots J_r^*}{Q_1 \dots Q_r}
 \bigg(
 \alpha_{\boldsymbol{\tilde \phi}}(\tilde\Delta_1, \dots, \tilde\Delta_r)
 - \frac{1}{\tilde\Delta_1 \dots \tilde\Delta_r}
 \bigg)
 \prod_{j=1}^r \theta'(\xi_{3,j}) \dots \theta(\xi_{6,j})
 \bd \boldsymbol{\xi} \\
 &= o\Bigg( 
 \sum_{\boldsymbol\kappa} \sum_{\boldsymbol\lambda} \dsum_{\u}
 \Bigg( \prod_{i=1}^r \frac{H^{\kappa_i} h^{\sharp}_i(u_i)}{u_i} \Bigg)
 \int_I \dots \int_I 
 \prod_{j=1}^r \sum_{Q_j} \bigg|
 \dsum_{m_j,\delta_j,\delta'_j,d_j}
 \frac{J_j^*}{Q_j \tilde\Delta_j}
 \theta'(\xi_{3,j}) \dots \theta(\xi_{6,j})
  \bigg|
 \d\boldsymbol\xi  \Bigg) \\
 &= o\Bigg( \sum_{\boldsymbol\kappa} \sum_{\boldsymbol\lambda} \dsum_{\u}
\prod_{j=1}^r
\frac{H^{\kappa_j} h^{\sharp}_j(u_j)d(u_j)^r}{u_j}
\frac{W_j}{\phi(W_j)}\frac{1}{\log N}
 \prod_{\substack{p < N^{\gamma} \\ p\nmid W_j}} 
 \left(1 +\frac{|h_j(p)|}{p} \right)\Bigg)\\
 &= o\Bigg( 
\frac{W_j}{\phi(W_j)}\frac{1}{\log N}
 \prod_{\substack{p < N^{\gamma} \\ p\nmid W_j}} 
 \left(1 +\frac{|h_j(p)|}{p} \right)\Bigg),
\end{align*}
where the last step follows from \eqref{eq:c_0}.
This completes the the proof of Proposition \ref{p:linear-forms}, 
assuming Lemma \ref{lem:lin-2}.
\end{proof}

It remains to prove Lemma \ref{lem:lin-2}. We emphasise that, although this lemma 
corresponds to \cite[Lemma 9.3]{bm}, the proof below is significantly stronger in 
that it does not rely on any assumption on the behaviour close to $s=1$ of 
the Dirichlet series attached to $h^{\sharp}$ or to $h^{\flat}$.

\begin{proof}[Proof of Lemma \ref{lem:lin-2}]
Most of this proof only considers the $j$-th factor of the integrand.
To simplify the notation, we abbreviate $h = h_j$, $h^{\flat} = h_j^{\flat}$,
$h^{\sharp} = h_j^{\sharp}$ and $g = g_j^{\sharp} = h_j^{\sharp} * \mu$ throughout the proof.
We proceed by decomposing the sum over $Q_j, m_j,\delta_j,\delta'_j,d_j$ into an Euler product, keeping 
in mind that the contribution of higher prime powers only affects the implied 
constant. 
The latter holds since whenever $N$ sufficiently large with respect to $H$, we have
\begin{align} \label{eq:higher-prime-powers}
\prod_{p\nmid W_j} \bigg(1 + \frac{H^2}{p^2} +  \frac{H^3}{p^3} + \dots \bigg)
&\leq  \prod_{p > w(N)} \bigg(1 + \frac{H^2}{p^2}\Big(1-\frac{H}{p}\Big)^{-1}\bigg)
\leq  \prod_{p > w(N)} \bigg(1 + \frac{2 H^2}{p^2}\bigg) \\
&\ll \exp \Big( 2H^2 \sum_{p>w(N)} \frac{1}{p^2}\Big) = O_H(1),
\end{align}
where we used that $W(N)|W_j$.

Thus, recalling the definitions of $J_j^*$ and $J_j$ from \eqref{eq:J_j-1} and \eqref{eq:J_j-2}, it follows that
\begin{align} \label{eq:lin-2-proof1}
& \left| \dagsum_{\substack{Q_j, m_j,\delta_j,\delta'_j,d_j }}
 \frac{J_j^*}{Q_j \tilde\Delta_j} \right|
=
 \left|
 \dagsum_{Q_j, m_j,\delta_j,\delta'_j,d_j}
 \frac{J_j^*}{Q_j m_j d_j \lcm(\delta_j,\delta'_j)} \right| \\
\nonumber 
&\ll \Bigg|  
 \prod_{\substack{p' \not\in \mQ \\ p' \nmid W_jM}}
 \left( 1 +  \frac{g(p')}{{p'}^{1+\frac{1+i\xi_4}{\gamma\log N}}} \right)
 \Bigg| \times \\
\nonumber 
& \quad \times 
 \Bigg\{
 \Bigg|  
 \prod_{\substack{ p \in \mQ \\ p \nmid W_jM}} 
 \left( 
 1 + \frac{h^{\flat}(p)}{p^{1+\frac{1+i\xi_3}{\gamma\log N}}}
 -\frac{1}{p^{1+\frac{1+i\xi_5}{\gamma \log N}}}
 -\frac{1}{p^{1+\frac{1+i\xi_6}{\gamma \log N}}}
 +\frac{1}{p^{1+\frac{2+i\xi_5+i\xi_6}{\gamma \log N}}}
 \right) 
 \Bigg|\\
\nonumber 
&\qquad + \Bigg|  
 \sum_{\substack{Q \in \mQ \\ Q\nmid W_jM}} \frac{h^{\flat}(Q)}{Q^{1+\frac{1 + i \xi_3}{\gamma \log N}}} 
 \prod_{\substack{p \in \mQ \\ p \nmid W_jM}} 
 \left( 
 1 + \frac{h^{\flat}(p)\1_{p<Q}}{p^{1+\frac{1+i\xi_3}{\gamma\log N}}}
 -\frac{1}{p^{1+\frac{1+i\xi_5}{\log Q}}}
 -\frac{1}{p^{1+\frac{1+i\xi_6}{\log Q}}}
 +\frac{1}{p^{1+\frac{2+i\xi_5+i\xi_6}{\log Q}}}
 \right) \Bigg| \Bigg\}.
\end{align}
Next, we observe that, in the above bound, we may by invoking \eqref{eq:higher-prime-powers}
split all factors of the form $(1+\frac{a_{1,p}}{p^{1+s_1}} + \dots + \frac{a_{4,p}}{p^{1+s_4}})$
into products of individual factors of the form $(1+\frac{a_{i,p}}{p^{1+s_i}})$.

To bound above those resulting products $\prod_p (1+\frac{a_{i,p}}{p^{1+s_i}})$ all of whose coefficients 
are non-negative, we will frequently make use of the following estimate.
Let $\mathcal{A}$ be a set of primes $(a_p)_{p \in \mathcal{A}}$ a sequence of non-negative real numbers that are 
bounded above by some constant $C>1$. Then, whenever $c>0$ is a constant, we have 
\begin{equation} \label{eq:prod-pos-coeff}
 \prod_{\substack{p \in \mathcal{A}}} 
 \left(1 +\frac{a_p}{p^{1+\frac{c}{\log y}}} \right)
 \ll \prod_{\substack{p \in \mathcal{A}\\p\leq y}} 
 \left(1 +\frac{a_p}{p} \right).
\end{equation}
To see this, note that
\begin{align*}
 \prod_{\substack{p \in \mathcal{A}\\p > y}} 
 \left(1 +\frac{a_p}{p} \right)
 &\ll \exp \Big( \sum_{p>y} \frac{a_p}{p^{1+\frac{c}{\log y}}} \Big)
 \ll \exp \Big( \sum_{k>\log_2 y} \frac{2^k}{k}\frac{C}{2^{k(1+\frac{c}{\log y})}} \Big)\\
 &\ll \exp \Big( \frac{C}{\log_2 y} \int_{\log_2 y}^{\infty} e^{-cx / \log_2 y} \d x \Big)
 \ll \exp \Big( \frac{C}{c} \exp(-c) \Big) \ll 1.
\end{align*}

When bounding the contribution from those products $\prod_p (1+\frac{a_{i,p}}{p^{1+s_i}})$ that have negative coefficients, 
i.e. when $a_{i,p}=-1$ for all $p$, we need to be much more careful.
For these products we have the following bound:
\begin{align*}
 \Bigg|  
 \prod_{\substack{p \in \mQ \\ p \nmid W_j M }} 
\left( 
 1 -\frac{1}{p^{1+\frac{1+i\xi}{\gamma \log N}}}
 \right) 
 \Bigg| 
 &\ll \Bigg|\zeta^{-1}\left(1+\frac{1+i\xi}{\gamma \log N}\right)
 \prod_{\substack{p \not\in \mQ \\ p \nmid W_jM}} 
 \left( 
  1 + \frac{1}{p^{1+\frac{1+i\xi}{\gamma \log N}}}
 \right) 
 \prod_{\substack{q \text{ (prime)} \\ q | W_jM}} 
 \left( 
  1 + \frac{1}{q^{1+\frac{1+i\xi}{\gamma \log N}}}
 \right) 
 \Bigg|\\
 &\ll \left|\frac{1+i\xi}{\gamma \log N}\right|
 \prod_{\substack{ p \not\in \mQ \\ p \nmid W_jM}} 
 \left( 
  1 + \frac{1}{p^{1+\frac{1}{\gamma \log N}}}
 \right) 
 \prod_{\substack{q < N^{\gamma}  \\ q | W_jM}} 
 \left( 
  1 + \frac{1}{q}
 \right)\\
&\ll \left|\frac{1+i\xi}{\gamma \log N}\right|
 \prod_{\substack{ p \not\in \mQ \\ p<N^{\gamma},~ p \nmid W_jM}} 
 \left( 
  1 + \frac{1}{p}
 \right) 
 \prod_{\substack{q < N^{\gamma} \\ q | W_jM}} 
 \left( 
  1 + \frac{1}{q}
 \right)\\
&\ll |1+i\xi|
 \prod_{\substack{p \in \mQ\\p<N^{\gamma},~ p \nmid W_jM}} 
 \left( 
  1 - \frac{1}{p}
 \right).
\end{align*}
A similar bound holds when $\gamma \log N$ is replaced by $\log Q$. 
Combined with \eqref{eq:lin-2-proof1}, these two bounds yield:
\begin{align} \label{eq:lin-2-proof2}
& \left|
 \dagsum_{Q_j, m_j,\delta_j,\delta'_j,d_j}
 \frac{J_j^*}{Q_j m_j d_j \lcm(\delta_j,\delta'_j)} \right|\\
\nonumber 
&\ll   \left|(1+i\xi_5)(1+i\xi_6)\right|
  \prod_{\substack{ p' \not\in \mQ \\ \\ p' \nmid W_jM}}
 \left( 1 +  \frac{g(p')}{{p'}^{1+\frac{1}{\gamma\log N}}} \right) \times \\
\nonumber 
& \qquad \times \Bigg\{
 \prod_{\substack{ p \in \mQ \\ p \nmid W_jM}} 
 \left( 
 1 + \frac{h^{\flat}(p)}{p^{1+\frac{1}{\gamma\log N}}}\right)
 \left(1+\frac{1}{p^{1+\frac{2}{\gamma \log N}}} \right)
 \prod_{\substack{ p \in \mQ \\p<N^{\gamma},~ p \nmid W_jM}} 
 \left(1-\frac{1}{p} \right)^{2}
 \\
\nonumber 
&\qquad \qquad+ 
 \sum_{\substack{Q \in \mQ \\ Q \nmid W_jM}} \frac{h^{\flat}(Q)}{Q^{1+\frac{1}{\gamma \log N}}} 
 \prod_{\substack{ p \in \mQ \\ p < Q,~ p \nmid W_jM }} 
 \left(1 +\frac{h^{\flat}(p)}{p^{1+\frac{1}{\gamma \log N}}} \right)
 \left(1 - \frac{1}{p} \right)^2 
 \prod_{\substack{ p'' \in \mQ \\ p'' \nmid W_jM}} 
 \left(1 +\frac{1}{{p''}^{1+\frac{2}{\log Q}}} \right) \Bigg\}.
\end{align} 
Using \eqref{eq:prod-pos-coeff}, the above is bounded by
\begin{align} \label{eq:lin-2-proof3}
&\ll   \left|(1+i\xi_5)(1+i\xi_6)\right|
 \quad \prod_{\substack{p' \not\in \mQ \\ p'<N^{\gamma},~ p' \nmid W_jM}}
 \left( 1 + \frac{g(p')}{p'} \right) \\
 \nonumber
&\quad \Bigg\{
 \prod_{\substack{ p \in \mQ \\ p <N^{\gamma} \\ p \nmid W_jM}} 
 \left( 
 1 + \frac{h^{\flat}(p)}{p}\right)
 \left(1-\frac{1}{p} \right)
+ 
 \sum_{\substack{Q \in \mQ \\ Q\nmid W_jM}} \frac{h^{\flat}(Q)}{Q^{1+\frac{1}{\gamma \log N}}} 
 \prod_{\substack{p \in \mQ \\ p<Q \\ p \nmid W_jM}} 
 \left(1 +\frac{h^{\flat}(p)}{p^{1+\frac{1}{\gamma \log N}}} \right)
 \left( 1 - \frac{1}{p} \right) \Bigg\}.
\end{align}
Note that
\begin{align*}
&\sum_{\substack{Q \in \mQ \\ Q\nmid W_jM}} \frac{h^{\flat}(Q)}{Q^{1+\frac{1}{\gamma \log N}}} 
 \prod_{\substack{p \in \mQ \\ p<Q,~ p \nmid W_jM}} 
 \left(1 +\frac{h^{\flat}(p)}{p^{1+\frac{1}{\gamma \log N}}} \right)
 \left( 1 - \frac{1}{p} \right) \\
& \ll 
 \sum_{\substack{Q \in \mQ \\ Q\nmid W_jM}} 
 \frac{h^{\flat}(Q)}{Q^{1+\frac{1}{\gamma \log N}}} \left( 1 - \frac{1}{Q^{1+\frac{1}{\gamma \log N}}} \right)
 \prod_{\substack{p \in \mQ \\ p<Q,~ p \nmid W_jM}} 
 \left(1 +\frac{h^{\flat}(p)}{p^{1+\frac{1}{\gamma \log N}}} \right)
 \left( 1 - \frac{1}{p^{1+\frac{1}{\gamma \log N}}} \right) \\
&= \prod_{\substack{p \in \mQ \\ p \nmid W_jM}} 
 \left(1 +\frac{h^{\flat}(p)}{p^{1+\frac{1}{\gamma \log N}}} \right)
 \left( 1 - \frac{1}{p^{1+\frac{1}{\gamma \log N}}} \right) \\
&= \prod_{\substack{p \in \mQ \\ p \nmid W_jM}} 
 \left(1 +\frac{h^{\flat}(p)}{p^{1+\frac{1}{\gamma \log N}}} \right)
 \prod_{\substack{p' \in \mQ \\ p' \nmid W_jM}} 
 \left( 1 - \frac{1}{{p'}^{1+\frac{1}{\gamma \log N}}} \right) \\
&\ll \prod_{\substack{p \in \mQ \\ p < N^{\gamma}, \, p \nmid W_jM}} 
 \left(1 +\frac{h^{\flat}(p)}{p} \right)
 \left( 1 - \frac{1}{p} \right).
\end{align*}
Thus, \eqref{eq:lin-2-proof3} is bounded by
\begin{align*}
&\ll  \left|(1+i\xi_5)(1+i\xi_6)\right|
  \prod_{\substack{p' \not\in \mQ \\ p'<N^{\gamma},~ p' \nmid W_jM}}
 \left( 1 +  \frac{g(p')}{p'} \right) 
 \prod_{\substack{p<N^{\gamma}\\ p \in \mQ,\, p \nmid W_jM }} 
 \left( 1 + \frac{h^{\flat}(p)}{p}\right)
 \left(1-\frac{1}{p} \right)\\  
&\ll   \left|(1+i\xi_5)(1+i\xi_6)\right|
  \prod_{\substack{p<N^{\gamma}\\ p \nmid W_jM}}
 \left( 1 + \frac{|h(p)|}{p} \right) \left( 1-\frac{1}{p} \right)\\  
 & \ll_{\gamma} |1+i\xi_5||1+i\xi_6|~
 \frac{1}{\log N} \frac{W_jM}{ \phi(W_jM)}
 \prod_{\substack{ p < N \\ p \nmid W_jM}} 
 \left(1 +\frac{|h_j(p)|}{p} \right).
 \end{align*}
 Inserting this bound for \eqref{eq:lin-2-proof1} into \eqref{eq:lin-2} and taking the decay properties \eqref{eq:decay}
 of the function $\theta$ and $\theta'$ into account,  yields the bound claimed in Lemma \ref{lem:lin-2}.
\end{proof}

\subsection{The average order of $\nu^{\sharp} \nu^{\flat}$} \label{s:average-order}
As a consequence of the proof of Proposition \ref{p:linear-forms}, 
we shall now deduce Proposition \ref{p:average-order}, that is, that $\nu^{\sharp} \nu^{\flat}$
has the `correct' average order.
To start with, note that the first bound of \eqref{eq:correct-average-order} is immediate 
since $\nu^{\sharp} \nu^{\flat}$ is, 
outside of the sparse set $\mathcal{S}$, a majorant for $|h|$.
The second bound follows from the upper bound \eqref{eq:linear-4} on 
$$\frac{1}{T^s \vol \fK}
\sum_{\n \in \ZZ^s \cap T\fK} \prod_{i=1}^s \nu_{h_i}(\phi_i(\n)),
$$
where we are only interested in the case $s=1$.
To see this, we apply first Lemma \ref{lem:lin-1} and then Lemma \ref{lem:lin-2} 
to the integrand, and finally recall that the outer sum \eqref{eq:c_0} 
converges.

\section{Proof of Proposition \ref{p:correlation}} \label{s:correlation-condition}
To complete the proof of Theorem \ref{t:pseudorandom}, it remains to prove Proposition \ref{p:correlation}.

Recall that $\tau = (N, W_1, \dots, W_r, A_1, \dots, A_r)$ and that
$d \in \{1, \dots, D\}$ and $(i_1, \dots, i_d) \in \{1,\dots,r\}^d$.
 A slight adaptation of \cite[Lemma 9.9]{GT-longprimeAPs} yields the following.
\begin{lemma}  \label{l:GTsigma}
Let $\Delta:\ZZ \to \ZZ$ denote the polynomial
$$\Delta(m)= \prod_{1 \leq j < j' \leq d} 
\bigg(Wm + \frac{W}{W_{i_j}}A_{i_j} - \frac{W}{W_{i_{j'}}}A_{i_{j'}}\bigg)
$$
where $W = \lcm(W_1, \dots, W_r)$.
Suppose that the family of functions 
$$\sigma_{\tau,d}: \{-\lfloor N/W \rfloor,\dots,\lfloor N/W \rfloor\} \to \RR_{\geq 0},$$  
satisfies the two conditions $\sigma_{\tau,d}(0) \ll_q (N/W)^{1/q}$ for all $q\in \NN$ and $\tau$
as in Theorem \ref{t:pseudorandom},
and
$$
\sigma_{\tau,d}(m) 
= \exp \left( \sum_{p>w(N),~ p|\Delta(m)} O_d(p^{-1/2}) \right)
$$
for $m\not=0$.
Then $\EE_{-T/W\leq m \leq T/W} \sigma_{\tau,d}^q(m) \ll_q 1$ for all $q \in \NN$, all $\tau$ and 
$T \in [N(\log N)^{-B},N]$.
\end{lemma}

Note that whenever the collection $(a_j)_{1 \leq j \leq d} \in \{1, \dots, \lfloor T/W \rfloor\}^d$ contains two identical elements, 
then $\sigma_{\tau,d}(0)$ appears in the bound \eqref{eq:correlation-est} we seek to establish.
Following \cite{GT-longprimeAPs, GT-linearprimes} closely, 
we seek to handle this case using the fact that the above lemma allows us to choose
$\sigma_{\tau,d}(0)$ to be rather large.
To start with, note that the generalised divisor function $d_k$ satisfies 
condition (i) of Definition \ref{d:M} with $H=k$.
Further, given $h \in \mathcal{F}^*$, condition (i) from Definition \ref{d:M} implies the bound
$$|h(n)| \leq H^{(\log n)/\log w(N)} = n^{(\log H)/\log w(N)},$$
valid for all $n$ coprime to $W(N)$.
Thus, recalling \eqref{eq:d_7-bound} and \eqref{eq:p-linear-forms-2nd-moment}, we have
\begin{align*}
 \nu_h^{\sharp}(n) \nu_h^{\flat}(n)
 &\leq \sum_{\kappa, \lambda, u, Q} \sum_{m,\delta,\delta',d} \1_{\lcm(\delta m Q, \delta' m Q, u,d)|n}
 H^{\kappa} h^{\sharp}(u) g^{\sharp}(d) \\
 & \leq H^{(\log \log N)^3} d_7(n) h^{\sharp}(n) 
 \leq H^{(\log \log N)^3} N^{(\log (7H))/\log w(N)}
\end{align*}
for all sufficiently large $N \in \NN$ and all $n \leq N$ coprime to $W(N)$.
Hence, it follows that
\begin{align*}
\frac{W}{T}\sum_{1 \leq m \leq T/W} \prod_{j=1}^d 
  \nu_{h_{i_j}}(W_{i_j}(m+a_j) + A_{i_j})
\ll \exp(O_{H}(1) D (\log N)/ \log w(N))
\end{align*}
provided that $1 \leq d \leq D$. 
Setting $$\sigma_{\tau,d}(0) = \exp (C_H D \log N / \log \log N)$$ for some absolute constant $C_H$ 
thus ensures that \eqref{eq:correlation-est} holds whenever $1 \leq d \leq D$, and that the condition 
on $\sigma_{\tau,d}(0)$ in Lemma \ref{l:GTsigma} is satisfied.

In the remaining case where the $a_j$ are pairwise distinct, the system
of linear forms is less degenerate and we may employ the same techniques
used to prove Proposition~\ref{p:linear-forms}.
The key observation is that whenever a prime $p$ divides two distinct
polynomials $W_{i_j}(m+a_j) + A_{i_j}$ and $W_{i_{j'}}(m+a_{j'}) + A_{i_{j'}}$ at the integer $m$,
then it divides $\Delta(a_j-a_{j'})$.
Moreover, we claim that if $\Delta := \prod_{j \not= j'} \Delta(a_j-a_{j'})$ and the $a_j$ are pairwise distinct, then
\begin{align} \label{eq:correlation-proof}
 &\frac{W}{T}
 \sum_{1 \leq m \leq T/W} \prod_{j=1}^d 
 \nu_{h_{i_j}}^{(N)}(W_{i_j}(m+a_j) + A_{i_j}) \\
 \nonumber
 &\ll 
 \Bigg(\prod_{j=1}^d E_{h_{i_j}}(N;W_{i_j}) \Bigg)
 \prod_{\substack{p| \Delta \\ p > w(N)}} \Bigg(
 \sum_{c_1, \dots, c_d} \alpha_{\boldsymbol{\phi}}(p^{c_1},\dots,p^{c_d})
 (2^4H)^{c_1 + \dots + c_d}
 \Bigg),
\end{align}
where $\boldsymbol \phi = (\phi_{i_1}, \dots, \phi_{i_d}): \ZZ \to \ZZ^d$ 
is the system of linear forms $\phi_j(m) = W_j (m+a_j) + A_j$
for $1 \leq j \leq r$, and where $E_{h_{i_j}}(N;W_{i_j})$ was defined in \eqref{eq:def-E}.
We shall take \eqref{eq:correlation-proof} on trust for now and defer its proof to the very end of this section.
Note that, instead of \eqref{eq:ev-alpha-1}, the given system $\boldsymbol \phi$ (of infinite complexity) satisfies
\begin{align}\label{eq:ev-alpha-inf-complexity}
\alpha_{\boldsymbol{\phi}}(p^{c_1},\dots,p^{c_d})
\begin{cases} 
\leq  p^{- \max_i c_i} & \text{for all } p \text{ and } \mathbf{c}; \cr
= p^{- \max_i c_i} & \text{if } n(\mathbf c)=1 \text{ and }(c_i>0 \Rightarrow p \nmid W_i);\cr
= 0 & \text{if } \exists i: c_i >0 \text{ and } p|W_i; \cr
= 0 & \text{if }p\nmid \Delta \text{ and } n(\mathbf c)>1. 
\end{cases}
\end{align}

Since there are at most $(j+1)^{d}$ integer tuples $(c_1, \dots, c_d)$ with $\max_i c_i =j$
and since 
$(j+1)^{d} (2^4H)^{dj}  \leq p^{j/2}/2$ for all $j \geq 1$ and all $p > w(N)$ as soon as $N$ is sufficiently large,
we have
\begin{align} \label{eq:alpha-c_1--c_d}
 \sum_{c_1, \dots, c_d} \alpha_{\boldsymbol{\phi}}(p^{c_1},\dots,p^{c_d}) (2^4H)^{c_1 + \dots + c_d}
 \leq 1 + \frac{1}{2} p^{-1/2} \sum_{j \geq 0} p^{-j/2}
 \leq 1 + p^{-1/2}.
\end{align}
Thus, it follows from \eqref{eq:correlation-proof} that
\begin{align*}
 &\frac{W}{T}
 \sum_{1 \leq m \leq T/W} \prod_{j=1}^d 
 \frac{\nu_{h_{i_j}}^{(N)}(W_{i_j}(m+a_j) + A_{i_j}) }{E_{h_{i_j}}(N;W_{i_j})}
 \ll \prod_{\substack{p| \Delta \\ p > w(N)}} (1 + p^{-1/2}) 
 \ll \exp \bigg( \sum_{\substack{p| \Delta \\ p > w(N)}} O( p^{-1/2}) \bigg).
\end{align*}
Since $\exp(x_1+ \dots +x_d) \leq \exp(d \max_i x_i)$ the above is bounded by
$$\ll  \sum_{j \not= j'} \exp \bigg( \sum_{\substack{p| \Delta(a_j -a_{j'}) \\ p > w(N)}} O_d( p^{-1/2}) \bigg).$$
The proposition now follows from Lemma \ref{l:GTsigma}.

It remains to prove \eqref{eq:correlation-proof}.
Using the lattice point counting formula \eqref{eq:lattice-point-count}
and writing $L'_j = L_j u_j$ and $L_j = \lcm(\delta_j,\delta'_j) m_jQ_j d_j$ for $1 \leq j \leq d$, we deduce that
\begin{align*}
 &\frac{W}{T}
 \sum_{1 \leq m \leq T/W} \prod_{j=1}^d 
 \nu_{h_{i_j}}^{(N)}(W_{i_j}(m+a_j) + A_{i_j}) \\
 &= (1+o(1)) 
 \sum_{\boldsymbol\kappa,\boldsymbol\lambda,\u, \Q}
 \sum_{~\m, \boldsymbol\delta,\boldsymbol\delta', \bd~}
 \alpha_{\boldsymbol{\phi}}(L'_1, \dots, L'_d)
  \prod_{j=1}^d 
  H^{\kappa_j} h^{\sharp}(u_j)
 g_j^{\sharp}(d_j) \mu(\delta_j) \mu(\delta'_j)
 h^{\flat}(m_jQ_{j}) \\
& \qquad \qquad \qquad \qquad \times 
 \lambda\left(\frac{\log Q_jm_j}{\log N}\right)
 \chi\left(\frac{\log d_j}{\log N^{\gamma}}\right)
 \chi\left(\frac{\log \delta_j}{\log Q_j^*}\right)
 \chi\left(\frac{\log \delta'_j}{\log Q_j^*}\right).
\end{align*} 
Invoking the transformations \eqref{eq:chi-trans} and \eqref{eq:lambda-trans}, the above equals
 \begin{align} \label{eq:corr-proof-2}
 &\sum_{\boldsymbol\kappa,\boldsymbol\lambda,\u}
 H^{\kappa_j} h^{\sharp}(u_j)
 \int_I \dots \int_I
 \sum_{\Q,\m,\boldsymbol\delta,\boldsymbol\delta',\bd}
 \alpha_{\boldsymbol\phi}(L'_1,\dots,L'_d)
 \left(\prod_{j=1}^d  J_j^*~
 \theta'(\xi_{3,j})
 \theta(\xi_{4,j})
 \theta(\xi_{5,j})
 \theta(\xi_{6,j})\right)
 \bd \boldsymbol{\xi} + \mathcal{E},  
\end{align}
where
$$ J_j^* = h^{\flat}(Q_j) Q_j^{-\frac{1+i\xi_3}{\gamma\log N}} 
 g_j^{\sharp}(d_j)
 h^{\flat}(m_j)
 m_j^{-\frac{1+i\xi_3}{\gamma\log N}}
 d_j^{-\frac{1+i\xi_4}{\gamma\log N}}
 \mu(\delta_j) \mu(\delta'_j)
 \delta_j^{-\frac{1+i\xi_5}{\log Q_j^*}}
 {\delta'}_j^{-\frac{1+i\xi_6}{\log Q_j^*}}
$$
with $Q_j^*= Q_j(N^{\gamma})^{\1_{Q_j=1}}$, as before,
and where the error term $\mathcal{E}$ is such that for all $E \geq 1$: 
\begin{align} \label{eq:corr-error}
 \mathcal{E} \ll_E \frac{1}{(\log N)^E}
  \sum_{\boldsymbol\kappa,\boldsymbol\lambda,\u~} 
  \sum_{\substack{\Q, \m, \boldsymbol\delta,\boldsymbol\delta',\bd }}
  \alpha_{\boldsymbol\phi}(L'_1, \dots, L'_d)
  \prod_{i=1}^d 
  \frac{H^{\kappa_i} H^{\Omega(d_i) +\Omega(u_i)}}
  {(Q_i m_i d_i \delta_i \delta'_i)^{1/\log N^{\gamma}}}.
\end{align}
To simplify the task of bounding the main term in the expression above, we seek to use the fact that $J^*_j$ 
is multiplicative in order to factorise the integrand in the main term for any fixed vector $\u$ into one factor 
running over primes dividing $\Delta$, one factor running over primes dividing $u_1 \dots, u_d$
and one factor taking into account the contribution from integers coprime to $M:=\Delta u_1 \dots u_d$.
The final factor will then indeed simplify since 
$\alpha_{\phi}(L'_1, \dots, L'_d) = \alpha_{\phi}(u_1, \dots, u_d)\prod_j 1/L_j$ 
or $\alpha_{\phi}(L'_1, \dots, L'_d)=0$ whenever $\gcd(L_1\dots L_d, M)=1$. 
We will consider the error term $\mathcal{E}$ in a second step, using the same approach.

For the purpose of the described factorisation, define the multiplicative function
$$\eta(\Q, \m,\boldsymbol\delta,\boldsymbol\delta',\bd) 
= \alpha_{\boldsymbol\phi}(L_1, \dots, L_d) J^*_1 \dots J^*_d.$$
In view of \eqref{eq:ev-alpha-inf-complexity} and since $|J^*_j| \leq H^{\Omega(d_j)}$, we have
\begin{align} \label{eq:eta-three-factors}
\nonumber
&\Bigg|
\sum_{\substack{\Q:\,Q_j \in \mathcal{P}_{\flat}^{(j)} \cup \{1\}, \\ 
      \gcd (Q_j,M)=1,\\
      (Q_j  = 1) \vee 
       (Q_j > \\ N^{\gamma/(\log \log N)^3 })}}
\sum_{\substack{\m: \, p|m_j \Rightarrow \\
      p \nmid M, \, p\in \mathcal{P}_{\flat}^{(j)}, \\ (Q_j>1 \Rightarrow p<Q_j)}}
\sum_{\substack{\boldsymbol\delta: \, p|\delta_j \Rightarrow \\
      p\nmid M, \, p\in \mathcal{P}_{\flat}^{(j)}}}
\sum_{\substack{\boldsymbol\delta': \, p|\delta'_j \Rightarrow \\
      p \nmid M,\, p\in \mathcal{P}_{\flat}^{(j)} }}
\sum_{\substack{\bd: \, p|d_j \Rightarrow \\
      p \nmid M,\, p\in \mathcal{P}_{\sharp}^{(j)} }}
\alpha_{\boldsymbol\phi}(L_1u_1, \dots, L_du_d) J^*_1 \dots J^*_d 
\Bigg| \\
\nonumber
&\leq 
\Bigg(\prod_{\substack{p|\Delta \\ p \nmid u_1 \dots u_d~}} 
  \starsum_{\substack{\bc_Q, \bc_m, \\ \bc_{\delta},\bc_{\delta'},\bc_{d} \in \NN_0^d}} 
  |\eta(p^{\bc_Q},p^{\bc_m},p^{\bc_{\delta}},p^{\bc_{\delta'}},p^{\bc_{d}})|\Bigg) \\
\nonumber
&\quad \times \Bigg( \prod_{\substack{p|u_1 \dots u_d~}} 
  \starsum_{\substack{\bc_Q, \bc_m, \\ \bc_{\delta},\bc_{\delta'},\bc_{d} \in \NN_0^d}} 
  \frac{H^{c_{d,1} + \dots + c_{d,d}}}
       {p^{\max (1,\max_i (c_{Q,i} + c_{m,i} + c_{\delta,i} + c_{\delta',i} + c_{d,i}))}}\Bigg) 
\\
& \quad \times \Bigg|\prod_{j=1}^d 
\sum_{\substack{Q_j \in \mathcal{P}_{\flat}^{(j)} \cup \{1\}, \\ 
Q_j  \not= 1 \Rightarrow
(Q_j \nmid M \wedge \\ Q_j > N^{\gamma/(\log \log N)^3 })}}
\sum_{\substack{m_j: \, p|m_j \Rightarrow \\
p \nmid M, \, p\in \mathcal{P}_{\flat}^{(j)}, \\ (Q_j>1 \Rightarrow p<Q_j)}}
\sum_{\substack{\delta_j: \, p|\delta_j \Rightarrow \\
p\nmid M, \, p\in \mathcal{P}_{\flat}^{(j)}}}
\sum_{\substack{\delta_j': \, p|\delta'_j \Rightarrow \\
p \nmid M,\, p\in \mathcal{P}_{\flat}^{(j)} }}
\sum_{\substack{d_j: \, p|d_j \Rightarrow \\
p \nmid M,\, p\in \mathcal{P}_{\sharp}^{(j)} }}
\frac{J^*_j}{L_j}\Bigg|, 
\end{align}
where $\starsum$ indicates that $\bc_Q$, $\bc_m$, $\bc_{\delta}$, $\bc_{\delta'}$, $\bc_{d}$ respect the correct 
summation conditions, e.g.\ $c_{m,j} = 0$ unless the following three conditions hold:
$p\in \mathcal{P}_{\flat}^{(j)}$, $p \nmid M$ and, provided $Q_j>1$, then $p<Q_j$. If these conditions hold then
$c_{m,j}$ runs though $\NN_0$; similar restrictions hold for the components of the remaining tuples.
For the later task of bounding $\mathcal{E}$, observe that \eqref{eq:eta-three-factors} remains to hold 
if we replace all instance of $J^*_j$, $1 \leq j \leq d$ by $|J^*_j|$, 
or even by the upper bound $H^{\Omega(d_j)}(Q_j m_j d_j \delta_j \delta'_j)^{-1/\log N^{\gamma}}$ for $|J^*_j|$.

Bounding the second factor in the above bound a similar fashion as in \eqref{eq:alpha-c_1--c_d} and 
dropping the summation restrictions in the first factor,
\eqref{eq:eta-three-factors} is seen to be bounded by:
\begin{align*}
&\ll 
\Bigg(\prod_{\substack{p|\Delta \\ p >w(N)}} \sum_{\substack{\bc_1, \dots, \bc_5 \\ \in \NN_0^d}} 
|\eta(p^{\bc_1},p^{\bc_2},p^{\bc_3},p^{\bc_4},p^{\bc_5})|\Bigg)
\frac{H^{\Omega(u_1 \dots  u_d)}}{\lcm(u_1, \dots,u_d)}
\Bigg|
\prod_{j=1}^d 
\sum_{\substack{Q_j,m_j,\delta_j,\delta_j',d_j \\ \text{coprime to } M}}
\frac{J^*_j}{L_j} \Bigg|.
\end{align*}
Inserting this bound into \eqref{eq:corr-proof-2} and applying first Lemma \ref{lem:lin-2} with 
$M= \Delta u_1 \dots u_d$ to bound the integral, 
and then \eqref{eq:c_0-lcm} to bound outer the summation over $\boldsymbol\kappa$, $\boldsymbol\lambda$ and $\u$, 
we obtain:
\begin{align*}
&\frac{W}{N}
 \sum_{m \in I} \prod_{j=1}^d 
 \nu_{h_{i_j}}^{(N)}(W_{i_j}(m+a_j) + A_{i_j}) 
 \ll \mathcal{E} + \\
&+
 \Bigg(\prod_{\substack{p|\Delta \\ p >w(N)}} \sum_{\substack{\bc_1, \dots, \bc_5 \\ \in \NN_0^d}} 
 |\eta(p^{\bc_1},p^{\bc_2},p^{\bc_3},p^{\bc_4},p^{\bc_5})|\Bigg)
 \sum_{\boldsymbol\kappa,\boldsymbol\lambda,\u}
 \frac{H^{\kappa_j} H^{2\Omega(u_1 \dots u_d)}}{\lcm(u_1, \dots, u_d)}
 \prod_{j=1}^d \frac{\Delta}{\phi(\Delta)} 2^{\omega(u_1 \dots u_d)} E_{h_{i_j}}(N;W_{i_j})\\
&\ll \mathcal{E} + 
 \Bigg(\prod_{\substack{p|\Delta \\ p >w(N)}} \sum_{\substack{\bc_1, \dots, \bc_5 \\ \in \NN_0^d}} 
 \frac{|\eta(p^{\bc_1},p^{\bc_2},p^{\bc_3},p^{\bc_4},p^{\bc_5})|}{(1 - p^{-1})^d}\Bigg) 
 \prod_{j=1}^d E_{h_{i_j}}(N;W_{i_j}).
\end{align*}
Finally,
\begin{align} \label{eq:eta-bound}
\nonumber
&\sum_{\substack{\bc_Q, \bc_m, \\ \bc_{\delta},\bc_{\delta'},\bc_d  \in \NN_0^d}} 
 |\eta(p^{\bc_Q}, p^{\bc_m}, p^{\bc_{\delta}},p^{\bc_{\delta'}},p^{\bc_d})|\\
\nonumber 
&\leq \sum_{\substack{\bc_Q, \bc_m, \\ \bc_{\delta},\bc_{\delta'},\bc_d  \in \NN_0^d}} 
 H^{c_{d,1} + \dots + c_{d,d}}~
 \alpha_{\boldsymbol\phi}(p^{c_{Q,1} + c_{m,1} + \max(c_{\delta,1},c_{\delta',1}) + c_{d,1}},
 \dots, p^{c_{Q,d} + c_{m,d} + \max(c_{\delta,d},c_{\delta',d}) + c_{d,d}})\\
\nonumber 
&\leq \sum_{\substack{c_1, \dots, c_d  \in \NN_0}} H^{c_1 + \dots + c_d} \alpha_{\boldsymbol\phi}(p^{c_1}, \dots, p^{c_d}) 
\sum_{ \substack{ \bc_1, \dots, \bc_5 \in \NN_0^d~}} 
\prod_{i=1}^d \1_{c_{Q,i} + c_{m,i} + \max(c_{\delta,i},c_{\delta',i}) + c_{d,i} = c_i } \\
\nonumber
&\leq \sum_{\substack{c_1, \dots, c_d  \in \NN_0}} H^{c_1 + \dots + c_d} \alpha_{\boldsymbol\phi}(p^{c_1}, \dots, p^{c_d}) 
 \prod_{i=1}^d (c_i + 1)^4 \\
&\leq \sum_{\substack{c_1, \dots, c_d  \in \NN_0}} 
 (2^{4}H)^{(c_1 + \dots + c_d)} 
 \alpha_{\boldsymbol\phi}(p^{c_1}, \dots, p^{c_d}), 
\end{align}
which shows that the bound for the main term agrees with the bound claimed in \eqref{eq:correlation-proof}.

Concerning the task of bounding the error term \eqref{eq:corr-error}, 
we begin by applying the factorisation \eqref{eq:eta-three-factors} with $J^*_j$ replaced by 
$H^{\Omega(d_j)}(Q_j m_j d_j \delta_j \delta'_j)^{-1/\log N^{\gamma}}$
to the summation in $\Q, \m, \boldsymbol\delta,\boldsymbol\delta',\bd$ and use
\eqref{eq:eta-bound} to bound the first factor and \eqref{eq:c_0-lcm} to bound the
summation in $\boldsymbol \kappa, \boldsymbol \lambda, \u$ with the extra weight coming from the 
second factor. This yields:
\begin{align*}
 \mathcal{E} \ll_E 
 \frac{1}{(\log N)^{E}}& \Bigg(
 \prod_{\substack{p|\Delta \\ p >w(N)}} \sum_{\substack{c_1, \dots, c_d \\ \in \NN_0}} 
 (2^{4}H)^{(c_1 + \dots + c_d)} 
 \alpha_{\boldsymbol\phi}(p^{c_1}, \dots, p^{c_d}) \Bigg) \\
 & \times \prod_{j=1}^d 
 \sum_{\substack{Q_j, m_j, \delta_j, \delta'_j, d_j \\ \text{coprime to }W(N)}}
 \frac{H^{\Omega(d_j)}}{(Q_j m_j d_j \delta_j \delta'_j)^{1+1/\log N^{\gamma}}}.
\end{align*}
Invoking \eqref{eq:prod-pos-coeff}, we have
\begin{align*}
 \sum_{\substack{Q_j, m_j, \delta_j, \delta'_j, d_j \\ \text{coprime to }W(N)}}
 \frac{H^{\Omega(d_j)}}{(Q_j m_j d_j \delta_j \delta'_j)^{1+1/\log N^{\gamma}}}
 \ll (\log N^{\gamma})^{O_H(1)},
\end{align*}
and hence 
$$\mathcal{E} \ll_{E,H} (\log N)^{-E} \prod_{\substack{p|\Delta \\ p >w(N)}}
 \sum_{\substack{c_1, \dots, c_d  \in \NN_0}} 
 (2^{4}H)^{(c_1 + \dots + c_d)} 
 \alpha_{\boldsymbol\phi}(p^{c_1}, \dots, p^{c_d}) . $$
This completes the proof of \eqref{eq:correlation-proof} and the proof of Proposition \ref{p:correlation}.

\section{Proofs of Proposition \ref{p:character-set}, Theorem \ref{t:main'}, and Corollaries 
\ref{cor:chi_0} and \ref{c:pret}} \label{s:corollary-proofs}
\begin{proof}[Proof of Proposition \ref{p:character-set}]
The first part, that is \eqref{eq:GS-corollary}, holds by \cite[Corollary 4.2]{lmm},  
which is a corollary to Granville, Harper and Soundararajan \cite[Theorem 1.8]{GHS}.

Since $\# \mathcal{E}_N(q) \ll_{\alpha_h, H} 1$ is finite, it follows from \eqref{eq:GS-corollary} and 
the assumption \eqref{eq:main'-assumption} that
$$
 S_{h^*}(T,q,A) = S_{h^*}(N,q,A)
 + o(E_{h}(N;q))
$$
uniformly for all $T \in (N(\log N)^{-C},N]$. 
Since in the current setting the value of $t$ that defines $h^*$ is fixed and independent of the cut-off $N$, 
all conditions from Definition \ref{d:F} (iv') hold and, hence, $h\in \mathcal{F}^*$.
\end{proof}

\begin{proof}[Proof of Theorem \ref{t:main'}]
Since Proposition \ref{p:character-set} implies that $h_1, \dots, h_r \in \mathcal{F}^*$, Theorem \ref{t:main} 
applies and yields an asymptotic formula of the form \eqref{eq:main} for the correlation 
$$
\frac{1}{\vol N\fK}
\sum_{\n \in \ZZ^s \cap N\fK} 
\prod_{i=1}^r 
h_i(\vphi_i(\n)), 
$$
which involves certain constants $B_1, B_2 >0$ and an integer-valued function $\widetilde{W}$ with the property that 
$\widetilde{W}(N) \leq (\log N)^{B_1}$ and $W(N)|\widetilde{W}(N)$ for all $N>1$.
Our aim is to show that under the assumptions of Theorem \ref{t:main'}, the main term of \eqref{eq:main}, 
that is: 
\begin{align} \label{eq:t-main-term}
\sum_{\substack{w_1, \dots, w_r 
 \\ p|w_i \Rightarrow p| \widetilde W
 \\ w_i \leq (\log N)^{B_2}}}
\sum_{\substack{A_1,\dots,A_r \\ \in (\ZZ/\widetilde W \ZZ)^*}}
\Bigg(\prod_{j=1}^r 
 h_j^*(w_j) S_{h_j^*}\Big(N;\widetilde W,A_j\Big)\Bigg)~
 \beta_{\bphi}(w_1A_1, \dots, w_rA_r),
\end{align}
can be factorised into a product over primes.
To start with, we consider the expression $S_{h^*}(N;\widetilde W,A)$ for
$h \in \{h_1, \dots, h_r\}$ and for any reduced residue $A \Mod{\widetilde W(N)}$.
Omitting the index $N$, let $\mathcal{E}$, $\mathcal{E}^+$ and $\mathcal{E}^*$ denote the respective sets 
of characters from the statement, and, given any $\chi \in \mathcal{E}$ or $\chi \in \mathcal{E}^+$, 
let $\chi^*$ denote the corresponding induced character in $\mathcal{E}^*$, if it exists.
Then, by \eqref{eq:GS-corollary} for $q=\widetilde{W}(N)$, \eqref{eq:tenen-assumpt-2} and
\eqref{eq:tenen-assumpt-1}, we obtain the expansion
\begin{align} \label{eq:S_h(T,W,A)-expansion}
 &S_{h^*}(y,\widetilde{W}(N),A) \\
 \nonumber
 &=
 \frac{\widetilde{W}}{\phi(\widetilde{W})}
 \sum_{\chi \in \mathcal{E}^+} \overline{\chi^*}(A) 
 S_{h^* \chi}(y)
 + o(E_{h}(N;\widetilde{W}(N))) 
 \\
\nonumber 
&= \frac{\widetilde{W}}{\phi(\widetilde{W})} \sum_{\chi \in \mathcal{E}^+}
 S_{|h|}(y) \overline{\chi}(A) 
 \prod_{p} \left(
 \frac{\sum_{p^k \leq y} h^*(p^k)\chi^*(p^k)p^{-k}}{\sum_{p^k \leq y} |h(p^k)|p^{-k}}\right)
 +  o(E_{h}(N;\widetilde{W}(N))) ,
\end{align}
valid for all $y \in (N^{1/2},N]$.
This gives a satisfactory factorisation for $S_{h^*}(N;\widetilde W(N),A)$.
The factor $\overline{\chi (A)}$ may be further decomposed as follows.
If $\vphi(\v)\equiv A w \Mod{p^{v_p(w\widetilde{W})}}$ for all 
$p | \widetilde W(N)$, 
and if $\tilde{\chi}$ denotes the completely multiplicative functions that arises from $\chi$ 
by setting $\tilde{\chi}(p)= \chi(p)$ if $p$ does not divide the conductor and 
$\tilde{\chi}(p)= 1$ otherwise,
then
\begin{align*}
  \overline{\chi}(A) 
&= \overline{\tilde{\chi}\left(\vphi(\v)\right)}
  \prod_{\substack{p|\widetilde W(N)}}
  \tilde{\chi} (p^{v_p(w)}).
\end{align*}

Recalling \eqref{eq:beta_phi} and comparing \eqref{eq:S_h(T,W,A)-expansion} with \eqref{eq:t-main-term} 
suggests to analyse the expression
$$
\sum_{\substack{w_1, \dots, w_r 
 \\ p|w_i \Rightarrow p<w(T)
 \\ w_i \leq (\log T)^{B_2}}}
 \left( \frac{\widetilde W}{\phi( \widetilde W)} \right)^r
 \sum_{\substack{A_1,\dots,A_r \\ \in (\ZZ/\widetilde W \ZZ)^*}}
 \Bigg(\prod_{i=1}^r 
 h^*_i(w_i) \overline{\chi_i}(A_i) \Bigg)~
 \frac{1}{(w\widetilde W)^s}
 \sum_{\substack{\v \in \\ (\ZZ/w \widetilde W \ZZ)^s}}
 \prod_{j=1}^r 
 \1_{\vphi_j(\v) \equiv w_j A_j ~~(w_j \widetilde W)}.
$$
By the Chinese remainder theorem, the inner sum of this expression 
is also multiplicative and we may, in particular, factorise the summation condition 
${\vphi_j(\v) \equiv w_j A_j \Mod{w_j \widetilde W}}$
into congruences modulo prime powers.
To handle the new summation conditions that arise, we will invoke the notion of divisor densities
already introduced in \eqref{eq:def-alpha}.
Thus, if $\bphi = (\vphi_1, \dots, \vphi_r)$ is a system of linear forms,
if $\mathbf{c}=(c_1, \dots, c_r) \in \NN_0^r$ and 
$m = \max \{ c_1, \dots, c_r \}$, we have
$$
\alpha_{\bphi}(p^{c_1},\dots,p^{c_r}) := 
\frac{1}{p^{ms}} \sum_{\mathbf u\in (\ZZ/p^m \ZZ)^s}
\prod_{i=1}^{r} \mathbf 1_{p^{c_i}\mid \vphi_i(\mathbf{u})}.
$$
Recall further that these quantities can be asymptotically evaluated and satisfy \eqref{eq:div_density_bounds}.
Extending $\alpha_{\boldsymbol{\vphi}}$ multiplicatively, it follows from \eqref{eq:div_density_bounds} that
\begin{align*}
 \alpha_{\boldsymbol{\vphi}}(n_1,\dots,n_r) 
 \ll_L (\lcm(n_1, \dots, n_r))^{-1} 
 \ll_L (\max_j n_j)^{-1}.
\end{align*}

We will use the decomposition
$$
\sum_{\substack{w_1, \dots, w_r 
 \\ p|w_i \Rightarrow p| \widetilde W(N)
 \\ w_i \leq (\log N)^{B_2}}}
= \sum_{\substack{w_1, \dots, w_r 
 \\ p|w_i \Rightarrow p| \widetilde W(N)}}
- \sum_{\substack{w_1, \dots, w_r 
 \\ p|w_i \Rightarrow p|\widetilde W(N)
 \\ \exists j. w_j > (\log N)^{B_2}}} 
$$
together with the bound
\begin{align} \label{eq:**}
&\sum_{\substack{w_1, \dots, w_r 
 \\ p|w_i \Rightarrow p|\widetilde W(N)
 \\ \exists j. w_j > (\log N)^{B_2}}}  
 \sum_{\substack{A_1,\dots,A_r \\ \in (\ZZ/\widetilde W \ZZ)^*}}
 \Bigg(\prod_{i=1}^r 
 |h^*_i(w_i) \overline{\chi_i}(A_i)| \Bigg)~
 \frac{1}{(w\widetilde W)^s}
 \sum_{\substack{\v \in \\ (\ZZ/w \widetilde W \ZZ)^s}}
 \prod_{j=1}^r 
 \1_{\vphi_j(\v) \equiv w_j A_j \Mod{w_j \widetilde W}} \\
\nonumber 
&\ll  \sum_{\substack{w_1, \dots, w_r 
 \\ p|w_i \Rightarrow p|\widetilde W(N)
 \\ \exists j. w_j > (\log N)^{B_2}}}  
 \prod_{p|\widetilde W(N)}  |h_1(w_1) \dots h(w_r) )| ~
 \alpha_{\bphi}(w_1, \dots, w_r) \\
\nonumber 
&\ll_{L,\eps} 
\sum_{\substack{w_1, \dots, w_r 
 \\ p|w_i \Rightarrow p|\widetilde W(N)
 \\ \exists j. w_j > (\log N)^{B_2}}}  
 (\max_j w_j)^{-1 + \eps}
  \ll_{L, \eps} 
 \sum_{\substack{w > (\log N)^{B_2}
 \\ p|w \Rightarrow p|\widetilde W(N)}} 
 \prod_{p|w} (v_p(w))^r ~ p^{-(1 - \eps)  v_p(w)} .
\end{align}
To bound the above, we make use of the following inequalities.
For all $a \geq 1$, $r \geq 1$ and $\eps >0$ we have
\begin{align*}
a^r \leq p^{a \eps} \quad \text{if} \quad p > r^{r/\eps} \qquad \text{and }
\qquad
a^r \ll_{r,\eps} p^{a \eps} \quad \text{for all } p.
\end{align*}
To prove the first bound, note that if 
$p > r^{r/\eps}$ and $1 \leq a \leq r$, then $a^r \leq r^r \leq p^{\eps} \leq p^{a \eps}$.
Furthermore, if $p > r^{1/\eps}$ and $2 \leq r < a$, then 
$a^r < r^a < p^{a \eps}$. 
The second bound can be proved with implied constant given by 
$C^*_{r,\eps}=\max_{a \leq C_{r,\eps}}(C_{r,\eps}, a^r)$, where $C_{r,\eps}=\frac{2^r r}{2^\eps -1}$.
Indeed, $a^r \leq C^*_{r,\eps} 2^{\eps a}$ holds for all $a \leq C_{r,\eps}$.
Further, if $a \geq C_{r,\eps} =\frac{2^r r}{2^\eps -1}$ and $a^r \leq C_{r,\eps} 2^{\eps a}$, then
$$\left(\frac{a+1}{a}\right)^r 
= (1 + a^{-1})^r \leq 1 + 2^r \frac{r}{a}
<  2^{\eps},$$
and hence $(a+1)^r \leq 2^{\eps} a^r \leq C_{r,\eps} 2^{\eps}2^{\eps a} = C_{r,\eps} 2^{\eps (a+1)}$, as required.

Setting $\eps = 1/8$, we deduce from the above two bounds that \eqref{eq:**} is bounded by
\begin{align} \label{eq:**-bound}
\nonumber
& \ll_{L, \eps} 
 \sum_{\substack{w > (\log N)^{B_2}
 \\ p|w \Rightarrow p|\widetilde W(N)}} 
 \prod_{p|w} (v_p(w))^r ~ p^{-(1 - \eps)  v_p(w)} 
\ll_{L, \eps} 
 \sum_{\substack{w > (\log N)^{B_2}
 \\ p|w \Rightarrow p|\widetilde W(N)}} 
 \prod_{p|w} p^{-(1 - 2\eps)  v_p(w)} \\ 
\nonumber 
&\ll_{L} 
 \sum_{\substack{w > (\log N)^{B_2}
 \\ p|w \Rightarrow p|\widetilde W(N)}} 
 w^{-3/4}
 \ll_{L} 
 \sum_{d > (\log N)^{B_2/3}} d^{-3/2}
 \prod_{p|\widetilde W(N)}
 \left(1 + p^{-3/4} \right) \\
&\ll_{L} (\log N)^{-B_2/6}~ 2^{\omega(\widetilde W(N))} 
\ll_{L} (\log N)^{-B_2/6 + o(1)},
\end{align}
since 
$\omega(\widetilde W(N)) \ll \pi(\log \log N) + \sum_{p|\widetilde{W}(N), p>\log \log N}1 
\ll (\log \log N)(\log \log \log N)^{-1}$.

Note that by the bound \eqref{eq:**-bound} on \eqref{eq:**}, we have
\begin{align} \label{eq:CRT-appl}
\nonumber
&\sum_{\substack{w_1, \dots, w_r 
 \\ p|w_i \Rightarrow p| \widetilde W(N)
 \\ w_i \leq (\log N)^{B_2}}}
 \left( \frac{\widetilde W}{\phi( \widetilde W)} \right)^r
 \sum_{\substack{A_1,\dots,A_r \\ \in (\ZZ/\widetilde W \ZZ)^*}}
 \Bigg(\prod_{i=1}^r 
 h_i^*(w_i) \overline{\chi_i}(A_i) \Bigg)~
 \frac{1}{(w\widetilde W)^s}
 \sum_{\substack{\v \in \\ (\ZZ/w \widetilde W \ZZ)^s}}
 \prod_{j=1}^r 
 \1_{\vphi_j(\v) \equiv w_j A_j ~(w_j \widetilde W)} \\
\nonumber
&=(1 + o(1)) \times \\
& \nonumber
\sum_{\substack{w_1, \dots, w_r 
 \\ p|w_i \Rightarrow p | \widetilde W(N)}}
 \left( \frac{\widetilde W}{\phi( \widetilde W)} \right)^r
 \sum_{\substack{A_1,\dots,A_r \\ \in (\ZZ/\widetilde W \ZZ)^*}}
 \Bigg(\prod_{i=1}^r 
 h_i^*(w_i) \overline{\chi_i}(A_i) \Bigg)~
 \frac{1}{(w\widetilde W)^s}
 \sum_{\substack{\v \in \\ (\ZZ/w \widetilde W \ZZ)^s}}
 \prod_{j=1}^r 
 \1_{\vphi_j(\v) \equiv w_j A_j ~(w_j \widetilde W)} \\
&=(1 + o(1)) \times \\
& \nonumber 
\sum_{\substack{w_1, \dots, w_r 
 \\ p|w_i \Rightarrow p | \widetilde W(N)}}
 \left( \frac{\widetilde W}{\phi( \widetilde W)} \right)^r
 \Bigg(\prod_{i=1}^r 
 h_i^*(w_i) \widetilde{\chi_i}(w_i) \Bigg)~
 \frac{1}{(w\widetilde W)^s}
 \sum_{\substack{\v \in \\ (\ZZ/w \widetilde W \ZZ)^s}}
 \prod_{j=1}^r \overline{\widetilde{\chi_j}(\vphi_j(\v))}
 \1_{w_j\| \vphi_j(\v)},
\end{align}
where, as before, $\widetilde{\chi_i}(p) = \chi_i(p)$ if $p$ does not divide the conductor of $\chi_i$, and 
$\widetilde{\chi_i}(p) = 1$ otherwise.
Letting $q_j$ denote the conductor of $\chi_j$, we now decompose 
$\chi_j = \prod_{p} \chi_{j,p}$ into characters modulo $p^{v_p(q_j)}$ and define for each new factor
$\widetilde\chi_{j,p}$ as before.
Then $$\widetilde\chi_j(m) = \prod_{p|q_j} \widetilde\chi_{j,p} (m_p)$$ whenever $m \equiv m_p \Mod{p^{v_p(q_j)}}$ 
for all $p|q_j$.
Using the Chinese remainder theorem, this allows us to factorise \eqref{eq:CRT-appl} as follows.
Let $Q$ be the product of all primes $p<B$ and of the conductors of all the $\chi_i$,
and recall the definition of $\beta_p(\chi_1, \dots, \chi_r)$ from the statement of the theorem.
Then the main term of \eqref{eq:CRT-appl} equals
\begin{align} \label{eq:factorisation}
& \prod_{p|\widetilde{W}(N)}
\beta_p(\chi_1, \dots, \chi_r) \prod_{j=1}^r\sum_{k\geq0} \frac{|h_j(p^k)|}{p^k} \\
\nonumber 
&=
\prod_{\substack{p| \widetilde W(N)
\\ p \nmid Q }}
\sum_{\substack{ a_1, \dots, a_r \\ \in \NN_0 }}
\prod_{i=1}^r \frac{h_i^*(p^{a_i}) \chi_i(p^{a_i})}{1-p^{-1}}
\left(
\lim_{m \to \infty} 
\frac{1}{p^{ms}}
\sum_{\v \in (\ZZ/p^{m}\ZZ)^s} \prod_{j=1}^r
\left( \1_{p^{a_j}|\vphi_j(\v)} - \1_{p^{a_j+1}|\vphi_j(\v)} \right) \right) \times\\
\nonumber
& \qquad \times
\prod_{\substack{p'| Q }}
\sum_{\substack{ a_1, \dots, a_r \\ \in \NN_0 }}
\prod_{i=1}^r \frac{h_i^*(p'^{a_i}) \widetilde\chi_{i,p'}(p'^{a_i})}{1-p'^{-1}}
\left(
\lim_{m \to \infty} 
\frac{1}{p'^{ms}}
\sum_{\v \in (\ZZ/p'^{m}\ZZ)^s}
\prod_{i=1}^r \overline{\widetilde\chi_{i,p'}}(\vphi_i(\v))
 \1_{p'^{a_i}\|\vphi_i(\v)}  \right)
\end{align}
In view of \eqref{eq:def-alpha} and \eqref{eq:div_density_bounds}, let $B>1$ 
be sufficiently large in terms of $L$ so that the second and third bound of \eqref{eq:div_density_bounds} apply 
to every $p \geq B$, and suppose also that $B > 2 r H^r$.

For every $p \nmid Q$ (not assuming $p|\widetilde W(N)$), we then have:
\begin{align} \label{eq:p>B'-factor}
\nonumber
&\beta_p(\chi_1, \dots, \chi_r) \prod_{j=1}^r\sum_{k\geq0} \frac{|h_j(p^k)|}{p^k}\\
\nonumber
&=\sum_{\substack{a_1, \dots, a_r \\  \in \NN_0}}
\prod_{i=1}^r \frac{h_i^*(p^{a_i}) \chi_i(p^{a_i})}{1 - p^{-1}}
\left(
\lim_{m \to \infty} 
\frac{1}{p^{ms}}
\sum_{\v \in (\ZZ/p^{m}\ZZ)^s}
\prod_{j=1}^r
\left( \1_{p^{a_j}|\vphi_j(\v)} - \1_{p^{a_j+1}|\vphi_j(\v)} \right) \right)\\
&= \sum_{\substack{a_1, \dots, a_r \\  \in \NN_0}}
\prod_{i=1}^r \frac{h_i^*(p^{a_i}) \chi_i(p^{a_i})}{1 - p^{-1}}
\sum_{\beps \in \{0,1\}^r}
(-1)^{n(\beps)} 
\alpha_{\bphi}(p^{a_1 + \eps_1},\dots,p^{a_r+\eps_r}).
\end{align}
The expression \eqref{eq:p>B'-factor} can now be asymptotically evaluated with the help of \eqref{eq:div_density_bounds}.
Note that whenever the bound \eqref{eq:div_density_bounds} on 
$\alpha_{\bphi}(p^{a_1 + \eps_1},\dots,p^{a_r+\eps_r})$
takes the form $p^{-k}$ or $O_L(p^{-k})$ for a given $k > 1$, 
then there are at most $2^rk^r$ admissible choices of 
$(a_1, \dots, a_r)$ and $(\eps_1, \dots, \eps_r)$, and for each of these choices we have
$|h_i^*(p^{a_i+\eps_i})| < H^k$ for every $i \in \{1, \dots, r\}$.
Thus, the expression \eqref{eq:p>B'-factor} equals
\begin{align} \label{eq:p>B'-factor-final}
\nonumber
& \left(1 - \frac{1}{p} \right) ^{-r} 
\left( 1 - \frac{r}{p} + \sum_{i=1}^r \frac{h_i^*(p) \chi_i(p)}{p} + 
O \left( \sum_{k\geq2} \frac{k^r H^{rk}}{p^k} \right) \right)\\
&= \prod_{i=1}^r 
   \left(1 + \frac{h_i^*(p) \chi_i(p)}{p} \right) 
   + O_{H,r}(p^{-2}) + O \left( \sum_{k\geq2} \frac{k^r H^{rk}}{p^k} \right).
\end{align}
The second error term above also equals $O_{H,r}(p^{-2})$.
To see this, we use the fact that $k^r \leq r^k$ whenever $k \geq r \geq 2$. 
This inequality follows easily by induction: the base case holds trivially, and if $k^r \leq r^k$
for a given $k \geq r$, then
$$(k+1)^{r} 
\leq k^r \left(1 + \frac{1}{k}\right)^r 
\leq k^r \sum_{j=0}^r \binom{r}{j} k^{-j}
\leq k^r \sum_{j=0}^r \frac{1}{j!}
\leq k^r \sum_{j=0}^r \frac{1}{2^j}
< 2 k^r 
\leq r k^r \leq r^{k+1}.
$$
Recalling that $p>B>2 r H^r$, the sum in the second error term above thus satisfies
$$
\sum_{k\geq2} \frac{k^r H^{rk}}{p^k}
\leq \sum_{2 \leq k < r} \frac{k^r H^{rk}}{p^k}
+ \sum_{k\geq r} \left( \frac{r H^r}{p} \right)^k
= O_{H,r}(p^{-2}),
$$
as claimed.
Comparing with \eqref{eq:p>B'-factor},
we deduce that for $p\nmid Q$ we have
\begin{align} \label{eq:beta_p-asymp}
\nonumber
\beta_p(\chi_1, \dots, \chi_r)
&= 
\prod_{j=1}^r
\left(1+\frac{|h_j(p)|}{p} \right)^{-1}
\left(1+\frac{h_j^*(p)\chi_j(p)}{p} \right)
+ O_{H,r}(p^{-2})\\
&= 
\prod_{j=1}^r
\left(1+\frac{h_j^*(p)\chi_j(p) - |h_j(p)|}{p} \right)
+ O_{H,r}(p^{-2}).
\end{align}
confirming \eqref{eq:beta_p;p-nmid-Q}.

Observe further that for $p\leq N$ with $p \nmid \widetilde{W}(N)$,
\begin{align} \label{eq:beta_p-large-p}
\nonumber
  \frac{\sum_{k:p^k \leq N} h_i^*(p^k)\chi_i^*(p^k)p^{-k}}{\sum_{k:p^k \leq N} |h_i(p^k)|p^{-k}}
 &= \prod_{j=1}^r
\left(1+\frac{h_j^*(p)\chi_j(p) - |h_j(p)|}{p} \right)
+ O_{H,r}(p^{-2})\\
 &= \beta_p(\chi_1, \dots, \chi_r) +  O_{H,r}(p^{-2})
\end{align}

To complete the proof of Theorem \ref{t:main'}, we insert \eqref{eq:S_h(T,W,A)-expansion}
into \eqref{eq:t-main-term} and then appeal to \eqref{eq:CRT-appl},
\eqref{eq:factorisation}, \eqref{eq:beta_p-asymp} and \eqref{eq:beta_p-large-p} to deduce that:
\begin{align*}
&\sum_{\substack{w_1, \dots, w_r 
 \\ p|w_i \Rightarrow p|\widetilde W(N)
 \\ w_i \leq (\log N)^{B_2}}}
\sum_{\substack{A_1,\dots,A_r \in \\ (\ZZ/\widetilde W(N) \ZZ)^*}}
\Bigg(\prod_{i=1}^r 
h_i^*(w_i) S_{h_i^*}\Big(N;\widetilde W,A_i\Big)\Bigg)~
\frac{1}{(w\widetilde W)^s}
\sum_{\substack{\v \in \\ (\ZZ/w \widetilde W \ZZ)^s}}
\prod_{j=1}^r 
\1_{\vphi_j(\v) \equiv w_j A_j \Mod{w_j \widetilde W}} \\
&= (1+o(1))
\sum_{\substack{\chi_1, \dots, \chi_r \\ \chi_j \in \mathcal{E}_j^+}} 
\prod_{i=1}^r 
S_{|h_i|}(N) 
\prod_{p\leq N}
 \left(
 \frac{\sum_{p^k \leq N} h_i^*(p^k)\chi_i^*(p^k)p^{-k}}{\sum_{p^k \leq N} |h_i(p^k)|p^{-k}}\right)
\\
&\qquad \qquad \qquad \qquad \times\prod_{p' | \widetilde W(N)} 
\beta_{p'}(\chi_1, \dots, \chi_r)
\prod_{j=1}^r
\left(\sum_{p'^k \leq N} |h_j(p'^k)|p'^{-k}\right) \\
&= (1+o(1))
\sum_{\substack{\chi_1, \dots, \chi_r \\ \chi_j \in \mathcal{E}_j^+}} 
\prod_{i=1}^r 
S_{|h_i|}(N) 
\prod_{\substack{p\leq N \\ p\nmid \widetilde{W}(N)}}
 \left(
 \frac{\sum_{p^k \leq N} h_i^*(p^k)\chi_i(p^k)p^{-k}}{\sum_{p^k \leq N} |h_i(p^k)|p^{-k}}\right)
 \prod_{p' | \widetilde W(N)} 
\beta_{p'}(\chi_1, \dots, \chi_r)
\\
&= (1+o(1))
\sum_{\substack{\chi_1, \dots, \chi_r \\ \chi_j \in \mathcal{E}_j^+}} 
\prod_{i=1}^r 
S_{|h_i|}(N) 
\prod_{\substack{p\leq N \\ p\nmid \widetilde{W}(N)}}
 \left(
 \beta_{p}(\chi_1, \dots, \chi_r)+ O_{H,r}(p^{-2})\right)
 \prod_{p' | \widetilde W(N)} 
\beta_{p'}(\chi_1, \dots, \chi_r)
\\
&= (1+o(1))
\prod_{i=1}^r 
S_{|h_i|}(N) 
\sum_{\substack{\chi_1, \dots, \chi_r \\ \chi_j \in \mathcal{E}_j^+}} 
\prod_{\substack{p\leq N }}
 \beta_{p}(\chi_1, \dots, \chi_r),
\end{align*}
proving the first asymptotic expansion \eqref{eq:expansion-1} from the theorem.

Concerning the second asymptotic expansion, observe that
\begin{align*}
\nonumber
&
\prod_{\substack{p\leq N }}
 \beta_{p}(\chi_1, \dots, \chi_r)
&=
\beta_{Q}(\chi_1, \dots, \chi_r)
\prod_{p \leq N, p \nmid Q} \left(1 + \frac{h_i(p) \chi_i(p) - |h_i(p)|}{p} \right) 
\left(1 + O_{H,r} \left(p^{-2} \right)\right) ,
\end{align*}
and that, taking into account that $W(B)|Q$, we have 
\begin{align*}
\prod_{p\nmid Q} 
\left(1 + O_{H,r} \left(p^{-2} \right)\right)
&= \exp \bigg(\sum_{p \nmid Q} O_{H,r} \left(p^{-2} \right) (1 + O_{H,r}(p^{-4})) \bigg)
= \exp O_{H,r} \left(B^{-1} \right) \\
&= 1 + O_{H,r} \left(B^{-1} \right),
\end{align*}
provided $B$ is sufficiently large with respect to $H$ and $r$.
Hence, \eqref{eq:expansion-2} follows.
This completes the proof of Theorem \ref{t:main'}.
\end{proof}

\begin{proof}[Proof of Corollary \ref{cor:chi_0}]
 This is just a special case of Theorem \ref{t:main'}.
\end{proof}

\begin{proof}[Proof of Corollary \ref{c:pret}]
 Assuming that $\mathcal{E}_{j,N}^+ = \{\chi_j\}$ for $1 \leq j \leq r$ and all sufficiently large $N$,
 the asymptotic formula is, again, an immediate consequence of Theorem \ref{t:main'}. 
 To show that $\mathcal{E}_{j,x}^+ = \{\chi_j\}$ we will make use of the fact that characters are repulsive 
 in the sense of \cite[\S 3]{BGS}.
 More precisely, \cite[Lemma 3.1]{BGS} implies that 
 $$\sum_{p\leq x} \frac{1 - \Re(h_j(p) \chi(p) p^{it})}{p} 
 \geq \Big(1 - \frac{1}{\sqrt{2}}\Big) \log \log x + O(\sqrt{\log \log x})$$
 for all $t$, $|t| \leq (\log x)^2$ and for every primitive $\chi \not= \chi_j$, and hence
 $$\sum_{p\leq x} \frac{1 - \Re(h_j(p) \chi^*(p) p^{it})}{p} 
 \geq \Big(1 - \frac{1}{\sqrt{2}}\Big) \log \log x + 
 O(\log \log (\log x)^C ) + 
 O(\sqrt{\log \log x})$$
 whenever $\chi^* \Mod{q}$ is induced from $\chi$ and $q \leq (\log x)^C$.
 Hence, it follows from \cite[Corollaire 2.1]{tenen} with $r=\1$ and
 $f(n) = h_j(n) \chi^*(n) n^{it}$ that
 $$
 \left|\frac{1}{x} \sum_{n \leq x} h_j(n) \chi^*(n) n^{it} \right|
 \ll_{\eps} (\log x)^{-1 + \frac{1}{\sqrt{2}} + \eps},
 $$
 and, thus, \eqref{eq:tenen-assumpt-2} holds for $\chi \in \mathcal{E}_j \setminus \{ \chi_j \}$.
 (Alternatively, we could have directly applied \cite[Theorem 2]{BGS}.)
 Concerning \eqref{eq:tenen-assumpt-1}, we apply \cite[Theorem 1.3]{tenen}, this time with $r=|h_j|$.
 Since 
 \begin{align} \label{eq:pret-convergence}
   \sum_{p}\frac{|h_j(p)|-\Re(h_j(p)\chi_j(p)p^{it_j})}{p} 
 \leq \sum_{p} \frac{1-\Re(h_j(p)\chi_j(p)p^{it_j})}{p} < \infty,
 \end{align}
 the condition \cite[eq.\ (1.15)]{tenen} holds for any valid choice of constants 
 $\beta$ and $\mathfrak{b}$, provided $x \mapsto \eps_x \in ((\log x)^{-1/2},1/2]$ 
 is a function that tends to $0$ as $x \to \infty$ and provided $x$ is sufficiently large. 
 Thus, let $\mathfrak{a}=\mathfrak{b}=1/c$ for some fixed $c \in \NN$ with $c \geq 4$. 
 Then we need to check \cite[eq.\ (1.16)]{tenen} with 
 $\mathfrak{h}=(1-\mathfrak{b})/\mathfrak{b} = c-1$.
 Note that, since $|h_j(p)| \leq 1$ for all $p$, 
 $$
 \sum_{x^{\eps} < p \leq y} \frac{(|h_j(p)|-\Re(h_j(p)\chi_j(p)p^{it_j}))^{c-1} \log p}{p}
 \leq 2^{c-2} \log y \sum_{x^{\eps} < p \leq y} \frac{|h_j(p)|-\Re(h_j(p)\chi_j(p)p^{it_j})}{p}.
 $$
 From \eqref{eq:pret-convergence} it follows that
 $$\sum_{x^{1/\sqrt{\log x}} < p \leq x} \frac{|h_j(p)|-\Re(h_j(p)\chi_j(p)p^{it_j})}{p} \to 0$$
 as $x \to \infty$, allowing us to choose a function $\eps_x$ for which \cite[eq.\ (1.16)]{tenen} holds
 for $\mathfrak{h}=c-1$ and for which \cite[eq.\ (1.19)]{tenen} holds for $\mathfrak{h}=1$.
 Finally, since $\sum_p (1-|h_j(p)|)/p < \infty$ we deduce
 that $1 - |h_j(p)| < 1/2$ for all but $o(x/\log x)$ primes $p \in [x,2x)$ as $x \to \infty$.
 Since $1/\sqrt{\eps_x} \to \infty$ as $x \to \infty$,
 condition \cite[eq.\ (1.24)]{tenen} clearly holds for all sufficiently large $x$, provided 
 $c = 1/ \mathfrak{b}>8$.
 Thus, \cite[Theorem 1.3]{tenen} applies and yields \eqref{eq:tenen-assumpt-1} for $\chi=\chi_j$.
 It follows that $\mathcal{E}_{j,x}^+ = \{\chi_j\}$, as required.
\end{proof}

\section{Application to eigenvalues of cusp forms}\label{s:applications}
In this section we show that both Theorem \ref{t:main} and Corollary \ref{cor:chi_0} apply 
to the normalised eigenvalues of holomorphic cusp forms.
Given a primitive holomorphic cusp form $f$ of weight $k\in 2\NN$ and level $N \in \NN$ 
(see \cite[\S14.1 and \S14.7]{IK} for definitions), let
$$
f(z)=
\sum_{n=1}^{\infty}
\lambda_f(n) n^{(k-1)/2}e(nz)
$$
be its Fourier expansion, where the $\lambda_f(n)$ are the normalised Fourier coefficients.
As proved in \cite[Lemma 4.15]{lmm}, the function $h: n \mapsto |\lambda_f(n)|$ belongs to $\mathcal{F}$; 
the additional property~(ii) in Definition \ref{d:M} follows from the divisor function bound
$|\lambda_f(n)| \leq d(n)$ due to Deligne.
Thus, we may apply Theorem \ref{t:main} with $h_j = |\lambda_{f_j}|$ for cusp forms $f_j$ as above.
Moreover, the following holds:
\begin{lemma}
Let $f_1, \dots, f_r$ be primitive holomorphic cusp forms of even integral weight and denote by 
$h_j(n)=\lambda_{f_j}(n)$ their respective normalised Fourier coefficients.
Let $\mathfrak{K} \subset [-1,1]^s$ and $\vphi_1, \dots \vphi_r \in \ZZ[X_1, \dots, X_s]$ be as in Theorem \ref{t:main}.
Then, as $T \to \infty$, we have
\begin{align*}
\frac{1}{\vol T\fK}
&\sum_{\n \in \ZZ^s \cap T\fK} 
\prod_{j=1}^r 
|\lambda_{f_j}(\vphi_j(\n))| \\
&=
\Bigg(
\prod_{i=1}^r 
S_{|h_i|}(T)\Bigg)
\prod_{p} \beta_p
+ o\Bigg(
\frac{1}{(\log T)^r} 
\prod_{j=1}^r  \prod_{p \leq T} 
\left(1 + \frac{|\lambda_{f_j}(p)|}{p} \right)
\Bigg)
~, 
\end{align*}
where $\beta_p=\beta_p(\chi_0, \dots, \chi_0)$ is given by \eqref{eq:beta_P(B)-cor} and satisfies
$
\beta_p = 1 + O_{r}(p^{-2}),
$
and where
$$S_{|h_j|}(T)
=\frac{1}{T} \sum_{n\leq N} |\lambda_{f_j}(n)|
\asymp (\log T)^{-1} \prod_{p \leq T} (1 + |\lambda_{f_j}(p)|p^{-1} ).
$$
\end{lemma}

\begin{proof}
The proof of \cite[Lemma 4.15]{lmm} does not only show that $|\lambda_f(n)| \in \mathcal{F}$, but it yields 
the following:
if $\tilde h$ denotes the multiplicative function defined by
$$
\tilde h(p^k) = 
\begin{cases}
 |\lambda_f(p)|/2 &\text{if } k=1, \\
 0 &\text{if } k \geq 2,
\end{cases}
$$
then there exists a constant $c>0$ such that, given any constant $C>1$, we have 
\begin{equation*}
S_{\tilde h\chi}(T) 
 \ll \frac{1}{(\log T)^{1+c \alpha_h}} \prod_{p \leq T, p \nmid q} \Big(1 + \frac{\tilde h(p)}{p} \Big),
\end{equation*}
uniformly for all sufficiently large $T$ and all non-trivial characters $\chi \Mod{q}$ with $1 < q \leq (\log T)^C$
and $W(T)|q$.
In particular this implies that, given any $C>1$, we have
\begin{equation} \label{eq:cusps-1}
 S_{\tilde h\chi}(T) 
 \ll \frac{1}{(\log N)^{1+c \alpha_h}} \prod_{p \leq N, p \nmid q} \Big(1 + \frac{\tilde h(p)}{p} \Big),
\end{equation}
uniformly for all sufficiently large $T$, all $T \leq N \leq T^{8}$ and all non-trivial characters 
$\chi \Mod{q}$ with $1 < q \leq (\log N)^{C}$ and $W(N)|q$.
The estimate \eqref{eq:cusps-1} can be related to a bound on $S_{|\lambda_f| \chi}(T)$ via \cite[Lemma 4.8]{lmm}
\footnote{$y$ should be $x$ in the first equation display of that lemma.}, 
which shows that 
$S_{|\lambda_f| \chi}(T) = o(E_{|\lambda_f|}(N;q))$, uniformly for all $q$ as above and all $T\in [N^{1/2},N]$.
Finally, we apply \cite[Corollary 4.2]{lmm} to deduce that 
$$
S_{|\lambda_f|}(T; q, A) =  
\frac{q}{\phi(q)}
  \overline{\chi_0}(A) \frac{1}{T} \sum_{n\leq T} |\lambda_f(n)| \chi_0(n)
 + o(E_{|\lambda_f|}(N;q)),
$$
uniformly all $q$ as above and $T\in [N^{1/2},N]$, and where $\chi_0 \Mod{q}$ is the trivial character.
Thus, setting $q = \widetilde W(N) \leq (\log N)^{B_1}$ with $\widetilde W $ as in the statement of 
Theorem~\ref{t:main} (in fact, we have $\widetilde W (N) = W(N)$ in this case), 
the conditions of Corollary \ref{cor:chi_0} are seen to be satisfied and the lemma follows.
\end{proof}

\begin{ack}
It is a pleasure to thank Stephen Lester for very helpful discussions and 
R{\'e}gis de la Bret{\`e}che and Yuri Tschinkel for their questions which led to 
this work.
I am very grateful to the referee for their detailed comments.
\end{ack}

\end{document}